\def\cP{\mathcal P}
\newtheorem{theorem}{Theorem}[section]
\newtheorem{lemma}[theorem]{Lemma}
\newtheorem{proposition}[theorem]{Proposition}
\newtheorem{definition}[theorem]{Definition}
\newtheorem{remark}[theorem]{Remark}
\begin{document}

\title{An answer to the
	Whitehead’s asphericity question}
\author{Elton Pasku \\
Universiteti i Tiran\"es \\
Fakulteti i Shkencave Natyrore \\
Departamenti i Matematik\"es\\ 
Tiran\"e, Albania \\
elton.pasku@fshn.edu.al}

\date{}

\maketitle

\begin{abstract}

The Whitehead asphericity problem, regarded as a problem of combinatorial group theory, asks whether any subpresentation of an aspherical group presentation is also aspherical. We give a positive answer to this question by proving that if $\cP=(\mathbf{x}, \mathbf{r})$ is an aspherical presentation of the trivial group, and $r_{0} \in \mathbf{r}$ a fixed relation, then $\cP_{1}=(\mathbf{x}, \mathbf{r}_{1})$ is aspherical where $\mathbf{r}_{1}=\mathbf{r} \setminus \{r_{0}\}$. 
\end{abstract}

\maketitle

\section{Introduction}

A 2-dimensional CW-complex $K$ is called {\it aspherical} if $\pi_2(K)=0$. The Whitehead asphericity problem (WAP for short), raised as a question in \cite{JHCW}, asks whether any subcomplex of an aspherical 2-complex is also aspherical. The question can be formulated in group theoretic terms since every group presentation $\mathcal{P}$ has a geometric realisation as a 2-dimensional CW-complex $K(\mathcal{P})$ and so $\mathcal{P}$ is called aspherical if $K(\mathcal{P})$ is aspherical. A useful review of this question is in \cite{Rosebrock}.

The purpose of the present paper is to prove that if $\mathcal{P}=( \mathbf{x},\mathbf{r} )$ is an aspherical presentation of the trivial group and $r_{0} \in \mathbf{r}$ is a fixed relation, then the subpresentation $\mathcal{P}_{1}=( \mathbf{x},\mathbf{r}_{1} )$ where $\mathbf{r}_{1}=\mathbf{r} \setminus \{r_{0}\}$ is again aspherical. This in fact implies that WAP has always a positive answer since in Theorem 1 of \cite{I IJM} Ivanov proves that if the WAP is false, then there is an aspherical presentation $\cP=(\mathcal{A}, \mathcal{R}\cup \{z\})$ of the trivial group where the alphabet $\mathcal{A}$ is countable and $z \in \mathcal{A}$ such that $\cP_{1}=(\mathcal{A}, \mathcal{R})$ is not aspherical.

An immediate implication of our result and that of Bestvina-Brady \cite{BB} is that the conjecture of Eilenberg and Ganea \cite{EG} is false. This conjecture states that if a discrete group $G$ has cohomological dimension 2, then it has a 2-dimensional Eilenberg-MacLane space $K(G, 1)$.

There is a large corpus of results which are related to ours and is mostly contained in \cite{BP}, \cite{BH}, \cite{AGP}, \cite{CH82}, \cite{GR}, \cite{How79}, \cite{How81a}, \cite{How81b}, \cite{How82}, \cite{How83}, \cite{How84}, \cite{How85}, \cite{How98}, \cite{Hue82}, \cite{I IJM}, \cite{IL} and \cite{Stefan}.

In the first part of our paper we will make use of the review paper \cite{BH} of Brown and Huebschmann which contains several key results about aspherical group presentations one of which is proposition 14 that gives sufficient and necessary conditions under which a group presentation $\mathcal{P}=( \mathbf{x},\mathbf{r} )$ is aspherical. It turns out that the asphericity of $\mathcal{P}$ is encoded in the structure of the free crossed module $(H/P,\hat{F},\delta)$ that is associated to $\mathcal{P}$. To be precise we state below proposition 14.
\begin{proposition} (Proposition 14 of \cite{BH})
Let $K(\mathcal{P})$ be the geometric realisation of a group presentation $\mathcal{P}=( \mathbf{x},\mathbf{r})$ and let $G$ be the group given by $\mathcal{P}$. The following are equivalent.
\begin{description}
\item [(i)] The 2-complex $K(\mathcal{P})$ is aspherical.
\item [(ii)] The module $\pi$ of identities for $\mathcal{P}$ is zero.
\item [(iii)] The relation module $\mathcal{N(P)}$ of $\mathcal{P}$ is a free left $\mathbb{Z}G$ module on the images of the relators $r \in \mathbf{r}$.
\item [(iv)] Any identity $Y$-sequence for $\mathcal{P}$ is Peiffer equivalent to the empty sequence.
\end{description}
\end{proposition}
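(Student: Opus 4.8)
The plan is to route everything through the free crossed module associated to $\cP$. Let $F$ be the free group on $\mathbf x$, let $N\trianglelefteq F$ be the normal closure of $\mathbf r$ (so $G=F/N$), and let $\delta\colon C\to F$ be the free crossed $F$-module on the set $\mathbf r$ (the free crossed module associated to $\cP$, i.e.\ the $(H/P,\hat F,\delta)$ of the excerpt with source $C=H/P$). Its kernel $\ker\delta$ is, in one standard formulation, the module of identities $\pi$ for $\cP$, and from the universal property of $\delta$ one has $\operatorname{im}\delta=N$, with $\pi$ a central $\mathbb ZG$-submodule of $C$. I would then show that each of (i), (iii), (iv) is equivalent to the single assertion $\pi=0$; the equivalences with (iii) and (iv) will be essentially formal, whereas (i)$\Leftrightarrow$(ii) --- Whitehead's identification of the second homotopy module with the module of identities --- carries the real weight.

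For (ii)$\Leftrightarrow$(iv) I would invoke the combinatorial (Reidemeister--Peiffer) model of $C$: it is the group of $Y$-sequences $\big((r_1^{\varepsilon_1},u_1),\dots,(r_k^{\varepsilon_k},u_k)\big)$, with $r_i\in\mathbf r$, $\varepsilon_i=\pm1$ and $u_i\in F$, under concatenation, modulo Peiffer equivalence, $\delta$ sending such a sequence to $\prod_i u_i r_i^{\varepsilon_i}u_i^{-1}\in F$. An identity $Y$-sequence is exactly one sent to $1$ by $\delta$; Peiffer operations preserve the $\delta$-value; and the Peiffer classes are precisely the elements of $C$. Hence the identity $Y$-sequences modulo Peiffer equivalence are exactly the elements of $\ker\delta=\pi$, the empty sequence representing $0$, and so (iv) holds if and only if $\pi=0$.

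For (i)$\Leftrightarrow$(ii) and (i)$\Leftrightarrow$(iii) I would pass to the universal cover $\widetilde K$ of $K(\cP)$ and use its equivariant cellular chain complex
\[
\mathbb ZG^{(\mathbf r)}\ \xrightarrow{\ \partial_2\ }\ \mathbb ZG^{(\mathbf x)}\ \xrightarrow{\ \partial_1\ }\ \mathbb ZG,
\]
with $\partial_1(e_x)=x-1$ and $\partial_2(e_r)$ the vector of Fox derivatives of $r$. Since $\widetilde K$ is simply connected, the Hurewicz theorem gives $\pi_2(K(\cP))\cong\pi_2(\widetilde K)\cong H_2(\widetilde K)=\ker\partial_2$, while $H_1(\widetilde K)=0$ gives $\operatorname{im}\partial_2=\ker\partial_1$, which one identifies with the relation module $\mathcal N(\cP)$; thus $\partial_2$ factors as $\mathbb ZG^{(\mathbf r)}\twoheadrightarrow\mathcal N(\cP)\hookrightarrow\mathbb ZG^{(\mathbf x)}$, the surjection sending $e_r$ to the image of $r$. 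Condition (iii) asserts exactly that this surjection is an isomorphism, i.e.\ that $\ker\partial_2=0$, i.e.\ that $\pi_2(K(\cP))=0$ --- this is (i)$\Leftrightarrow$(iii). For (i)$\Leftrightarrow$(ii) I would use the standard dictionary between spherical diagrams (pictures) over $\cP$, $2$-cycles of $\widetilde K$ and identity $Y$-sequences modulo Peiffer equivalence, which identifies $\ker\partial_2\subseteq\mathbb ZG^{(\mathbf r)}$ with $\ker\delta=\pi$ as $\mathbb ZG$-modules, whence $\pi_2(K(\cP))=0\iff\pi=0$. (Alternatively, (ii)$\Leftrightarrow$(iii) can be obtained algebraically: the Peiffer identity forces $N$ to act trivially on $C^{\mathrm{ab}}$, so $C^{\mathrm{ab}}$ is the free $\mathbb ZG$-module on $\mathbf r$, and abelianising $1\to\pi\to C\xrightarrow{\delta}N\to1$ yields the exact sequence $0\to\pi\to\mathbb ZG^{(\mathbf r)}\to\mathcal N(\cP)\to0$.)

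The step I expect to be the main obstacle is precisely that dictionary in (i)$\Leftrightarrow$(ii): one must match elements of $\ker\partial_2$ with identity $Y$-sequences so carefully that elementary homotopies of spherical pictures correspond, up to sign and reordering, to Peiffer operations, making $\pi_2(K(\cP))$, $H_2(\widetilde K)$ and $\ker\delta$ canonically one and the same $\mathbb ZG$-module. This is Whitehead's theorem on free crossed modules, in the picture-theoretic form systematised by Brown--Huebschmann; once it is granted, the remaining implications go through as sketched above.
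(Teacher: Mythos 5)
The paper does not prove this proposition at all: it is quoted verbatim as Proposition~14 of Brown--Huebschmann \cite{BH} and used as a black box, so there is no in-paper argument to measure your proposal against. Judged on its own terms, your sketch is the standard proof from the literature and is correct in outline. The reduction of (iv) to (ii) via the Reidemeister--Peiffer presentation of the free crossed module $C=H/P$ is sound, as is the reduction of (iii) to the vanishing of $\ker\partial_2$ in the equivariant chain complex of the universal cover (using $H_1(\widetilde K)=0$ and the identification of $\ker\partial_1$ with $\mathcal N(\cP)$ via Fox derivatives). You are also right about where the weight sits: the whole proposition hinges on two non-formal facts, namely Whitehead's theorem identifying $\pi_2(K(\cP))$ with $\ker\delta$, and the injectivity of the composite $\pi\hookrightarrow C\to C^{\mathrm{ab}}\cong\mathbb ZG^{(\mathbf r)}$ (equivalently, left-exactness of the abelianised sequence $0\to\pi\to\mathbb ZG^{(\mathbf r)}\to\mathcal N(\cP)\to0$, which is not automatic since abelianisation is only right exact and needs the Peiffer identity to control $[C,C]\cap\pi$). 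You flag both as imported results rather than proving them, which is exactly what \cite{BH} itself does in detail; as a self-contained proof your text is therefore a correct skeleton with the two classical load-bearing lemmas cited rather than established, and that matches the level at which the present paper invokes the proposition.
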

The last condition is of a particular interest to us. By definition, a $Y$-sequence for $\mathcal{P}$ is a finite (possibly empty) sequence of the form $((^{u_{1}}r_{1})^{\varepsilon_{1}},...,(^{u_{n}}r_{n})^{\varepsilon_{n}})$ where $r \in \mathbf{r}$, $u$ is a word from the free group $F$ over $\mathbf{x}$ and $\varepsilon =\pm 1$. A $Y$-sequence $((^{u_{1}}r_{1})^{\varepsilon_{1}},...,(^{u_{n}}r_{n})^{\varepsilon_{n}})$ is called an identity $Y$-sequence if it is either empty or if $\prod_{i=1,n}u_{i}r_{i}^{\varepsilon_{i}}u_{i}^{-1}=1$ in $F$. The definition of Peiffer equivalence is based on Peiffer operations on $Y$-sequences and reads as follows.
\begin{itemize}
\item [(i)] An \textit{elementary Peiffer exchange} replaces an adjacent pair $((^{u}r)^{\varepsilon},((^{v}s)^{\delta})$ in a $Y$-sequence by either $((^{ur^{\varepsilon}u^{-1}v}s)^{\delta},(^{u}r)^{\varepsilon})$, or by $((^{v}s)^{\delta}, ((^{vs^{-\delta}v^{-1}u}r)^{\varepsilon})$. 
\item [(ii)] A \textit{Peiffer deletion} deletes an adjacent pair $((^{u}r)^{\varepsilon},(^{u}r)^{-\varepsilon})$ in a $Y$-sequence. 
\item [(iii)] A \textit{Peiffer insertion} is the inverse of the Peiffer deletion.
\end{itemize}
The equivalence relation on the set of $Y$-sequences generated by the above operations is called \textit{Peiffer equivalence}. We recall from \cite{BH} what does it mean for an identity $Y$-sequence $((^{u_{1}}r_{1})^{\varepsilon_{1}},...,(^{u_{n}}r_{n})^{\varepsilon_{n}})$ to have the primary identity property. This means that the indices $1,2,...,n$ are grouped into pairs $(i,j)$ such that $r_{i}=r_{j}$, $\varepsilon_{i}=-\varepsilon_{j}$ and $u_{i}=u_{j}$ modulo $N$ where $N$ is the normal subgroup of $\hat{F}$ generated by $\mathbf{r}$. Proposition 16 of \cite{BH} shows that every such sequence is Peiffer equivalent to the empty sequence. Given an identity $Y$-sequence $d$ which is equivalent to the empty sequence 1, we would be interested to know what kind of insertions $((^{u}r)^{\varepsilon},(^{u}r)^{-\varepsilon})$ are used along the way of transforming $d$ to 1. It is obvious that keeping track of that information is vital to tackle the Whitehead problem.

The aim of Section \ref{poma} of the present paper is to offer an alternative way in dealing with the asphericity of a group presentation $\mathcal{P}=( \mathbf{x},\mathbf{r})$ by considering a new crossed module $(\mathcal{G}(\Upsilon),\hat{F},\tilde{\theta})$ over the free group $\hat{F}$ on $\mathbf{x}$ where $\mathcal{G}(\Upsilon)$ is the group generated by the symbols $(^{u}r)^{\varepsilon}$ subject to relations $(^{u}r)^{\varepsilon} (^{v}s)^{\delta}=(^{{u{r^{\varepsilon}}u^{-1}}v}s)^{\delta}(^{u}r)^{\varepsilon}$, the action of $\hat{F}$ on $\mathcal{G}(\Upsilon)$ and the map $\tilde{\theta}$ are defined in the obvious fashion. The advantage of working with $\mathcal{G}(\Upsilon)$ is that unlike to $H/P$, in $\mathcal{G}(\Upsilon)$ the images of insertions $((^{u}r)^{\varepsilon},(^{u}r)^{-\varepsilon})$ do not cancel out and this enables us to express the asphericity in terms of such insertions. This is realized by considering the kernel $\tilde{\Pi}$ of $\tilde{\theta}$ which is the analogue of the module $\pi$ of identities for $\mathcal{P}$ in the standard theory and is not trivial when $\mathcal{P}$ is aspherical. We call $\tilde{\Pi}$ the generalized module of identities for $\mathcal{P}$. 

To prove our results we apply techniques from the theory of semigroup actions and to this end we use concepts like the universal enveloping group $\mathcal{G}(S)$ of a given semigroup $S$, the dominion of a subsemigroup $U$ of a semigroup $S$ and the tensor product of semigroup actions. These concepts are explained, with references, in Section \ref{ma}.

\section{Monoid actions} \label{ma}

For the benefit of the reader not familiar with monoid actions we will list below some basic notions and results that are used in the paper. For further results on the subject the reader may consult the monograph \cite{Howie}. Given $S$ a monoid with identity element 1 and $X$ a nonempty set, we say that $X$ is a \textit{left S-system} if there is an action $(s,x) \mapsto sx$ from $S \times X$ into $X$ with the properties
\begin{align*}
(st)x&=s(tx) \text{ for all } s,t \in S \text{ and } x \in X,\\
1x&=x \text{ for all } x \in X.
\end{align*}
Right $S$-systems are defined analogously in the obvious way. Given $S$ and $T$  (not necessarily different) monoids, we say that $X$ is an \textit{(S,T)-bisystem} if it is a left $S$-system, a right $T$-system, and if
\begin{equation*}
(sx)t=s(xt) \text{ for all } s \in S, t \in T \text{ and } x \in X.
\end{equation*}
If $X$ and $Y$ are both left $S$-systems, then an \textit{S-morphism} or \textit{S-map} is a map $\phi: X \rightarrow Y$ such that
\begin{equation*}
\phi(sx)=s\phi(x) \text{ for all } s \in S \text{ and } x \in X.
\end{equation*}
Morphisms of right $S$-systems and of $(S,T)$-bisystems are defined in an analogue way. 
If we are given a left $T$-system $X$ and a right $S$-system $Y$, then we can give the cartesian product $X \times Y$ the structure of an $(T,S)$-bisystem by setting
\begin{equation*}
t(x,y)=(tx,y) \text{ and } (x,y)s=(x,ys).
\end{equation*}
Let now $A$ be an $(T,U)$-bisystem, $B$ an $(U,S)$-bisystem and $C$ an $(T,S)$-bisystem. As explained above, we can give to $A \times B$ the structure of an $(T,S)$-bisystem. With this in mind we say that a $(T,S)$-map $\beta: A \times B \rightarrow C$ is a \textit{bimap} if
\begin{equation*}
\beta(au,b)=\beta(a,ub) \text{ for all } a\in A, b \in B \text{ and } u \in U.
\end{equation*}
A pair $(A\otimes_{U}B,\psi)$ consisting of a $(T,S)$-bisystem $A\otimes_{U}B$ and a bimap $\psi: A \times B \rightarrow A \otimes_{U}B$ will be called a \textit{tensor product of A and B over U} if for every $(T,S)$-bisystem $C$ and every bimap $\beta: A \times B \rightarrow C$, there exists a unique $(T,S)$-map $\bar{\beta}: A\otimes_{U}B \rightarrow C$ such that the diagram
\begin{equation*}
\xymatrix{A\times B \ar[d]_{\beta} \ar[r]^{\psi} & A \otimes_{U} B \ar[ld]^{\bar{\beta}}\\C
}
\end{equation*}
commutes.  It is proved in \cite{Howie} that $A\otimes_{U}B$ exists and is unique up to isomorphism. The existence theorem reveals that $A\otimes_{U}B=(A \times B)/\tau$ where $\tau$ is the equivalence on $A \times B$ generated by the relation
\begin{equation*}
T=\{ ((au,b),(a,ub)): a \in A, b \in B, u \in U\}.
\end{equation*}
The equivalence class of a pair $(a,b)$ is usually denoted by $a \otimes_{U}b$. To us is of interest the situation when $A=S=B$ where $S$ is a monoid and $U$ is a submonoid of $S$. Here $A$ is clearly regarded as an $(S,U)$-bisystem with $U$ acting on the right on $A$ by multiplication, and $B$ as an $(U,S)$-bisystem where $U$ acts on the left on $B$ by multiplication. 

Another concept that is important to our approach is that of the dominion which is defined in \cite{Isbell} from Isbell. By definition, if $U$ is a submonoid of a monoid $S$, then the dominion $\text{Dom}_{S}(U)$ consists of all the elements $d \in S$ having the property that for every monoid $T$ and every pair of monoid homomorphisms $f,g: S \rightarrow T$ that coincide in $U$, it follows that $f(d)=g(d)$. Related to dominions there is the well known zigzag theorem of Isbell. We will present here the Stenstrom version of it (theorem 8.3.3 of \cite{Howie}) which reads. \textit{Let $U$ be a submonoid of a monoid $S$ and let $d \in S$. Then, $d \in \text{Dom}_{S}(U)$ if and only if $d \otimes_{U}1=1\otimes_{U}d$ in the tensor product $A=S \otimes_{U}S$}. We mention here that this result holds true if $S$ turns out to be a group and $U$ a subgroup, both regarded as monoids. A key result (theorem 8.3.6 of \cite{Howie}) that is used in the next section is the fact that any inverse semigroup $U$ is absolutely closed in the sense that for every  semigroup $S$ containing $U$ as a subsemigroup, $\text{Dom}_{S}(U)=U$. It is obvious that groups are absolutely closed as special cases of inverse monoids (see \cite{epidom2}).

\section{Peiffer operations and monoid actions} \label{poma}

Before we explain how monoid actions are used to deal with the Peiffer operations on $Y$-sequences, we will introduce several monoids. 

The first one is the monoid $\Upsilon$ defined by the monoid presentation $\mathcal{M}=\langle  Y \cup Y^{-1}, P \rangle $ where $Y^{-1}$ is the set of group inverses of the elements of $Y$ and $P$ consists of all pairs $(ab,{^{\theta (a)}}b a)$ where $a,b \in Y\cup Y^{-1}$. 

The second one is the group $\mathcal{G}(\Upsilon)$ given by the group presentation $ (Y \cup Y^{-1}, \hat{P} )$ where $\hat{P}$ is the set of all words $ab\iota(a)\iota(^{\theta(a)}b)$ where by $\iota(c)$ we denote the inverse of $c$ in the free group over $Y\cup Y^{-1}$. Before we introduce the next two monoids and the respective monoid actions, we stop to explain that $\Upsilon$ and $\mathcal{G}(\Upsilon)$ are special cases of a more general situation. If a monoid $S$ is given by the monoid presentation $\mathcal{M}=\langle  X, R\rangle$, then its \textit{universal enveloping group} $\mathcal{G}(S)$ (see \cite{Bergman} and \cite{Cohn}) is defined to be the group given by the group presentation $( X, \hat{R} )$ where $\hat{R}$ consists of all words $u\iota(v)$ whenever $(u,v) \in R$ where $\iota(v)$ is the inverse of $v$ in the free group over $X$. We let for future use $\sigma: FM(X) \rightarrow S$ the respective canonical homomorphism where $FM(X)$ is the free monoid on $X$. It is easy to see that there is a monoid homomorphism $\mu_{S}: S \rightarrow \mathcal{G}(S)$ which satisfies the following universal property. For every group $G$ and monoid homomorphism $f: S \rightarrow G$, there is a unique group homomorphism $\hat{f}: \mathcal{G}(S) \rightarrow G$ such that $\hat{f} \mu_{S}=f$. This universal property is an indication of an adjoint situation. Specifically, the functor $\mathcal{G}:\mathbf{Mon} \rightarrow \mathbf{Grp}$ which maps every monoid to its universal group, is a left adjoint to the forgetful functor $U: \mathbf{Grp} \rightarrow \mathbf{Mon}$. This ensures that $\mathcal{G}(S)$ is an invariant of the presentation of $S$. 

The third monoid we consider is the submonoid $\mathfrak{U}$ of $\Upsilon$, having the same unit as $\Upsilon$, and is generated from all the elements of the form $\sigma(a)\sigma(a^{-1})$ with $a \in Y \cup Y^{-1}$. This monoid, acts on the left and on the right on $\Upsilon$ by the multiplication in $\Upsilon$. 

The last monoid considered is the subgroup $\hat{\mathfrak{U}}$ of $\mathcal{G}(\Upsilon)$ generated by $\mu(\mathfrak{U})$. Similarly to above, $\hat{\mathfrak{U}}$ acts on $\mathcal{G}(\Upsilon)$ by multiplication.

Given $\alpha=(a_{1},...,a_{n})$ an $Y$-sequence over the group presentation $\mathcal{P}=( \mathbf{x},\mathbf{r})$, then performing an elementary Peiffer operation on $\alpha$ can be interpreted in a simple way in terms of the monoids $\Upsilon$ and $\mathfrak{U}$. In what follows we will denote by $\sigma(\alpha)$ the element $\sigma(a_{1})\cdot \cdot \cdot \sigma(a_{n}) \in \Upsilon$. If $\beta=(b_{1},...,b_{n})$ is obtained from $\alpha=(a_{1},...,a_{n})$ by performing an elementary Peiffer exchange, then from the definition of $\Upsilon$, $\sigma(\alpha)=\sigma(\beta)$, therefore an elementary Peiffer exchange or a finite sequence of such has no effect on the element $\sigma(a_{1})\cdot \cdot \cdot \sigma(a_{n}) \in \Upsilon$. Before we see the effect that a Peiffer insertion in $\alpha$ has on $\sigma(\alpha)$ we need the first claim of the following.
\begin{lemma} \label{central}
The elements of $\mathfrak{U}$ are central in $\Upsilon$ and those of $\hat{\mathfrak{U}}$ are central in $\mathcal{G}(\Upsilon)$.
\end{lemma}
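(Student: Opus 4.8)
The plan is to establish centrality on generators and then use that the centre of a monoid is a submonoid (and the centre of a group a subgroup). Since $\mathfrak{U}$ is generated by the elements $\sigma(a)\sigma(a^{-1})$ with $a\in Y\cup Y^{-1}$, and $\Upsilon$ is generated as a monoid by $\{\sigma(b):b\in Y\cup Y^{-1}\}$, it suffices to check that
\[
\sigma(a)\sigma(a^{-1})\sigma(b)=\sigma(b)\sigma(a)\sigma(a^{-1})\quad\text{in }\Upsilon
\]
for all $a,b\in Y\cup Y^{-1}$. Two elementary facts are used. First, the assignment $a\mapsto a^{-1}$ is an involution of $Y\cup Y^{-1}$ (group inversion of the symbols), and $\theta$ turns it into inversion in $\hat F$, i.e.\ $\theta(a^{-1})=\theta(a)^{-1}$; this is clear from $\theta\big((^{u}r)^{\varepsilon}\big)=ur^{\varepsilon}u^{-1}$. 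Second, the rule ${}^{w}(^{v}s)^{\delta}=(^{wv}s)^{\delta}$ is a genuine left action of $\hat F$ on the set $Y\cup Y^{-1}$; in particular ${}^{\theta(a)}b$ is again one symbol of $Y\cup Y^{-1}$ (so $\sigma$ of it is a generator), and ${}^{\theta(a)}\big({}^{\theta(a^{-1})}b\big)={}^{\theta(a)\theta(a^{-1})}b={}^{1}b=b$.

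The verification is then a two-step rewrite, each step being one application of a defining relation of $P$, which (since $\sigma$ is a monoid map) reads $\sigma(c)\sigma(d)=\sigma\big({}^{\theta(c)}d\big)\sigma(c)$:
\[
\sigma(a)\sigma(a^{-1})\sigma(b)
=\sigma(a)\,\sigma\big({}^{\theta(a^{-1})}b\big)\,\sigma(a^{-1})
=\sigma\big({}^{\theta(a)}({}^{\theta(a^{-1})}b)\big)\,\sigma(a)\,\sigma(a^{-1})
=\sigma(b)\,\sigma(a)\,\sigma(a^{-1}),
\]
the first equality applying the relation to the last two factors, the second to the first two, and the last using $\theta(a)\theta(a^{-1})=1$. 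Hence each $\sigma(a)\sigma(a^{-1})$ lies in $Z(\Upsilon)$, and since products of central elements are central, $\mathfrak{U}\subseteq Z(\Upsilon)$.

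For the second assertion I would not redo the computation but transport it along the canonical homomorphism $\mu\colon\Upsilon\to\mathcal{G}(\Upsilon)$. As $\mathfrak{U}$ is central in $\Upsilon$, $\mu(\mathfrak{U})$ commutes elementwise with $\mu(\Upsilon)$; and $\mathcal{G}(\Upsilon)$ is generated as a group by $\mu(\Upsilon)$ (it is already generated by the images of the symbols $Y\cup Y^{-1}$, each of which lies in $\mu(\Upsilon)$), so an element that commutes with all of $\mu(\Upsilon)$ commutes with the whole group. Thus $\mu(\mathfrak{U})\subseteq Z(\mathcal{G}(\Upsilon))$, and therefore the subgroup $\hat{\mathfrak{U}}=\langle\mu(\mathfrak{U})\rangle$ is contained in $Z(\mathcal{G}(\Upsilon))$ as well. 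I do not expect a genuine obstacle here; the only things to watch are applying the relations of $P$ in the correct direction, keeping ${}^{\theta(a)}b$ inside $Y\cup Y^{-1}$, and the bookkeeping identity $\theta(a)\theta(a^{-1})=1$. One could equally well run the same rewriting directly inside $\mathcal{G}(\Upsilon)$ using the relators $ab\,\iota(a)\,\iota\big({}^{\theta(a)}b\big)\in\hat P$, after which centrality follows because in a group an element commuting with a generating set also commutes with the inverses of its members.
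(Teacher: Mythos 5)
Your proof is correct and follows essentially the same route as the paper: verify $\sigma(a)\sigma(a^{-1})\sigma(b)=\sigma(b)\sigma(a)\sigma(a^{-1})$ on generators using the defining relations of $P$ together with $\theta(a)\theta(a^{-1})=1$, then conclude by centrality of products of central elements; the paper merely compresses your two rewrite steps into one twisted-commutation step. Your explicit argument for the second claim (transporting centrality along $\mu$ and using that $\mathcal{G}(\Upsilon)$ is generated by $\mu(\Upsilon)$) correctly fills in what the paper dismisses as "follows easily."
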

\begin{proof}
We see that for every $a \text{ and } b \in Y \cup Y^{-1}$, $\sigma(a)\sigma(a^{-1})\sigma(b)=\sigma(b)\sigma(a)\sigma(a^{-1})$. Indeed,
\begin{align*}
\sigma(a)\sigma(a^{-1})\sigma(b)&=~^{\theta(a)\theta(a^{-1})}{\sigma(b)}(\sigma(a)\sigma(a^{-1}))\\
&=\sigma(b)\sigma(a)\sigma(a^{-1}).
\end{align*}
Since elements $\sigma(b)$ and $\sigma(a)\sigma(a^{-1})$ are generators of $\Upsilon$ and $\mathfrak{U}$ respectively, then the first claim holds true. The second claim follows easily.
\end{proof}

If we insert $(a,a^{-1})$ at some point in $\alpha=(a_{1},...,a_{n})$ to obtain $\alpha'=(a_{1},...,a,a^{-1},...,a_{n})$, then from lemma \ref{central}, 
\begin{equation*}
\sigma(\alpha')=\sigma(\alpha) \cdot (\sigma(a)\sigma(a^{-1})),
\end{equation*}
which means that inserting $(a,a^{-1})$ inside a $Y$-sequence $\alpha$ has the same effect as multiplying the corresponding $\sigma(\alpha)$ in $\Upsilon$ by the element $\sigma(a)\sigma(a^{-1})$ of $\mathfrak{U}$. For the converse, it is obvious that any word $\beta \in FM(Y \cup Y^{-1})$ representing $\sigma(\alpha)\cdot (\sigma(a)\sigma(a^{-1}))$ is Peiffer equivalent to $\alpha$. Of course the deletion has the obvious interpretation in our semigroup theoretic terms as the inverse of the above process. We retain the same names for our semigroup operations, that is insertion for multiplication by $\sigma(a)\sigma(a^{-1})$ and deletion for its inverse. Related to these operations on the elements of $\Upsilon$ we make the following definition.
\begin{definition}
We denote by $\sim_{\mathfrak{U}}$ the equivalence relation in $\Upsilon$ generated by all pairs $(\sigma(\alpha),\sigma(\alpha)\cdot \sigma(a)\sigma(a^{-1}))$ where $\alpha \in \text{FM}(Y\cup Y^{-1})$ and $a \in Y \cup Y^{-1}$. We say that two elements $\sigma(a_{1})\cdot \cdot \cdot \sigma(a_{n})$ and $\sigma(b_{1})\cdot \cdot \cdot \sigma(b_{m})$ where $m,n \geq 0$ are \textit{Peiffer equivalent in $\Upsilon$} if they fall in the same $\sim_{\mathfrak{U}}$-class.
\end{definition}
From what we said before it is obvious that two $Y$-sequences $\alpha$ and $\beta$ are Peiffer equivalent in the usual sense if and only if $\sigma(\alpha)\sim_{\mathfrak{U}} \sigma(\beta)$. For this reason we decided to make the following convention. If $\alpha=(a_{1},...,a_{n})$ is a $Y$-sequence (resp. an identity $Y$-sequence), then its image in $\Upsilon$, $\sigma(\alpha)$ will again be called a $Y$-sequence (resp. an identity $Y$-sequence). In the future instead of working directly with an $Y$-sequence $\alpha$, we will work with its image $\sigma(\alpha)$.

We note that it should be mentioned that the study of $\sim_{\mathfrak{U}}$ might be as hard as the study of Peiffer operations on $Y$-sequences, and at this point it seems we have not made any progress at all. In fact this definition will become useful later in this section and yet we have to prove a few more things before we utilize it.

The process of inserting and deleting generators of $\mathfrak{U}$ in an element of $\Upsilon$ is related to the following new concept. Given $U$ a submonoid of a monoid $S$ and $d \in S$, then we say that $d$ belongs to the \textit{weak dominion of} $U$, shortly written as $d \in \text{WDom}_{S}(U)$, if for every group $G$ and every monoid homomorphisms $f,g:S \rightarrow G$ such that $f(u)=g(u)$ for every $u \in U$, then $f(d)=g(d)$. An analogue of the Stenstr\"{o}m version of Isbell's theorem for weak dominion holds true. The proof of the if part of its analogue is similar to that of Isbell theorem apart from some minor differences that reflect the fact that we are working with $WDom$ rather than $Dom$ and that will become clear along the proof, while the converse relies on the universal property of $\mu: S \rightarrow \mathcal{G}(S)$.  
\begin{proposition} \label{wd prop}
Let $S$ be a monoid, $U$ a submonoid and let $\hat{U}$ be the subgroup of $\mathcal{G}(S)$ generated by elements $\mu(u)$ with $u \in U$. Then $d \in \text{WDom}_{S}(U)$ if and only if $\mu(d) \in \hat{U}$.
\end{proposition}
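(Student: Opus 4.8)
The plan is to prove the two implications separately, exploiting the adjunction between $\mathcal{G}(-)$ and the forgetful functor $U:\mathbf{Grp}\to\mathbf{Mon}$ that was highlighted in Section \ref{ma}.

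For the ``only if'' direction, I would argue by contradiction: suppose $d\in\text{WDom}_{S}(U)$ but $\mu(d)\notin\hat{U}$. The idea is to manufacture a group $G$ and two homomorphisms $f,g:S\to G$ agreeing on $U$ but not on $d$, thereby contradicting membership in the weak dominion. The natural candidate is $G=\mathcal{G}(S)$ itself, or a quotient of it: take $f=\mu$, and for $g$ take the composite of $\mu$ with an automorphism (or endomorphism) of $\mathcal{G}(S)$ that fixes $\hat{U}$ pointwise but moves $\mu(d)$. If no such automorphism is available directly, one instead forms the amalgam or an HNN-type construction, or more cleanly uses the coset space: since $\mu(d)\notin\hat{U}$, the left cosets $\hat{U}$ and $\hat{U}\mu(d)$ are distinct, and one can define $g$ to be conjugation-twisted or use the standard trick of mapping into the group $\mathcal{G}(S)\times_{\hat U}\mathcal{G}(S)$ (the pushout in $\mathbf{Grp}$ of two copies of $\mathcal{G}(S)$ over $\hat U$) via the two canonical inclusions precomposed with $\mu$. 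These two maps $S\to\mathcal{G}(S)\ast_{\hat U}\mathcal{G}(S)$ agree on $U$ by construction, and they agree on $d$ precisely when $\mu(d)\in\hat U$ — this is where the amalgamated-free-product normal-form theorem does the work. So $f(d)\neq g(d)$, contradicting $d\in\text{WDom}_S(U)$. The key universal-property input, as the excerpt anticipates, is that any monoid map $S\to G$ into a group factors uniquely through $\mu$.

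For the ``if'' direction, assume $\mu(d)\in\hat{U}$ and let $f,g:S\to G$ be monoid homomorphisms into a group $G$ with $f|_U=g|_U$. By the universal property of $\mu:S\to\mathcal{G}(S)$, there are unique group homomorphisms $\hat{f},\hat{g}:\mathcal{G}(S)\to G$ with $\hat{f}\mu=f$ and $\hat{g}\mu=g$. Since $f$ and $g$ agree on $U$, the maps $\hat f$ and $\hat g$ agree on every $\mu(u)$ with $u\in U$, hence on the subgroup $\hat U$ they generate. Now write $\mu(d)$ as a word in the generators $\mu(u)$ (and their inverses) of $\hat U$; applying $\hat f$ and $\hat g$ to this word and using that they agree on each letter gives $\hat f(\mu(d))=\hat g(\mu(d))$, i.e. $f(d)=g(d)$. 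Therefore $d\in\text{WDom}_S(U)$.

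I expect the ``only if'' direction to be the main obstacle, and the delicate point there is exactly the one the excerpt flags when it says the argument ``is similar to that of Isbell['s] theorem apart from some minor differences that reflect the fact that we are working with $\text{WDom}$ rather than $\text{Dom}$.'' The difference is that Isbell's zigzag argument tests against arbitrary monoids $T$, whereas here the test objects are restricted to groups; one must check that the separating construction (the amalgam, or equivalently the tensor-product/coset argument) genuinely lands in $\mathbf{Grp}$ and that the two maps remain monoid homomorphisms on all of $S$, not merely on $U$ — this uses that $\mathcal{G}(S)$ is a group and that the pushout in groups of group maps is again a group, so no subtlety about monoid pushouts intrudes. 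Once the amalgamated product and its normal form are in hand, the equivalence ``$f(d)=g(d)$ for the two canonical maps $\iff \mu(d)\in\hat U$'' is the crux, and everything else is bookkeeping with the adjunction.
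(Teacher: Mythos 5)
Your proof is correct, and the ``if'' direction coincides with the paper's (both use the universal property of $\mu:S\rightarrow\mathcal{G}(S)$ to lift $f,g$ to $\hat f,\hat g$ agreeing on $\hat U$). For the ``only if'' direction, however, you take a genuinely different route. The paper forms the tensor product of $\hat U$-acts $\hat A=\mathcal{G}(S)\otimes_{\hat U}\mathcal{G}(S)$, builds a semidirect-product-style group $\mathcal{G}(S)\times\mathbb{Z}\hat A$, and exhibits two explicit monoid homomorphisms $\beta(s)=(\mu(s),0)$ and $\gamma(s)=(\mu(s),\mu(s)\otimes 1-1\otimes\mu(s))$ that agree on $U$; weak dominion then forces $\mu(d)\otimes_{\hat U}1=1\otimes_{\hat U}\mu(d)$, and the conclusion $\mu(d)\in\hat U$ follows from Stenstr\"om's version of Isbell's zigzag theorem combined with the absolute closedness of inverse semigroups (Theorems 8.3.3 and 8.3.6 of Howie). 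You instead test against the amalgamated free product $\mathcal{G}(S)\ast_{\hat U}\mathcal{G}(S)$ via the two canonical inclusions precomposed with $\mu$: these agree on $U$, and the normal form theorem shows they agree at $\mu(d)$ exactly when $\mu(d)\in\hat U$. This is valid and arguably more direct --- it proves $\mathrm{Dom}_{\mathcal{G}(S)}(\hat U)=\hat U$ inside $\mathbf{Grp}$ without routing through the semigroup-act machinery --- whereas the paper's version stays parallel to the classical Isbell argument and to the tensor-product framework it uses elsewhere. One caution: your first suggestion (an automorphism of $\mathcal{G}(S)$ fixing $\hat U$ pointwise and moving $\mu(d)$) does not exist in general, so the amalgam construction you fall back on is not merely a convenience but the essential step; make sure the final write-up commits to it and states the normal-form fact ($i_1(x)=i_2(x)$ iff $x\in\hat U$) explicitly.
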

\begin{proof}
The set $\hat{A}=\mathcal{G}(S)\otimes_{\hat{U}} \mathcal{G}(S)$ has an obvious $(\mathcal{G}(S),\mathcal{G}(S))$-bisystem structure. The free abelian group $\mathbb{Z}\hat{A}$ on $\hat{A}$ inherits a $(\mathcal{G}(S),\mathcal{G}(S))$-bisystem structure if we define
\begin{equation*}
g \cdot \sum z_{i}(g_{i} \otimes_{\hat{U}}h_{i})=\sum z_{i}(gg_{i}\otimes_{\hat{U}}h_{i}) \text{ and } \left(\sum z_{i} (g_{i} \otimes_{\hat{U}}h_{i})\right)\cdot g=\sum z_{i}(g_{i} \otimes_{\hat{U}}h_{i}g).
\end{equation*}
The set $\mathcal{G}(S) \times \mathbb{Z}\hat{A}$ becomes a group by defining 
\begin{equation*}
(g,\sum z_{i} g_{i} \otimes_{\hat{U}} h_{i})\cdot (g',\sum z'_{i} g'_{i} \otimes_{\hat{U}}h'_{i})=(gg', \sum z_{i} g_{i} \otimes_{\hat{U}} h_{i}g'+\sum z'_{i} gg'_{i} \otimes_{\hat{U}}h'_{i}).
\end{equation*}
The associativity is proved easily. The unit element is $(1,0)$ and for every $(g,\sum z_{i} g_{i} \otimes_{\hat{U}} h_{i})$ its inverse is the element $(g^{-1},-\sum z_{i} g^{-1}g_{i} \otimes_{\hat{U}} h_{i} g^{-1})$. Let us now define
\begin{equation*}
\beta: S \rightarrow \mathcal{G}(S) \times \mathbb{Z}\hat{A} \text{ by } s\mapsto (\mu(s),0),
\end{equation*}
which is clearly a monoid homomorphism, and
\begin{equation*}
\gamma: S \rightarrow \mathcal{G}(S) \times \mathbb{Z}\hat{A} \text{ by } s \mapsto (\mu(s), \mu(s) \otimes_{\hat{U}}1-1\otimes_{\hat{U}} \mu(s)),
\end{equation*}
which is again seen to be a monoid homomorphism. These two coincide on $U$ since for every $u \in U$
\begin{equation*}
\gamma(u)=(\mu(u),\mu(u) \otimes_{\hat{U}}1-1\otimes_{\hat{U}}\mu(u))=(\mu(u),0)=\beta(u).
\end{equation*}
The last equality and the assumption that $d \in \text{WDom}_{S}(U)$ imply that $\beta(d)=\gamma(d)$, therefore
\begin{equation*}
(\mu(d),0)=(\mu(d),\mu(d)\otimes_{\hat{U}} 1-1 \otimes_{\hat{U}} \mu(d)),
\end{equation*}
which shows that $\mu(d)\otimes_{\hat{U}} 1=1 \otimes_{\hat{U}} \mu(d)$ in the tensor product $\mathcal{G}(S)\otimes_{\hat{U}} \mathcal{G}(S)$ and therefore theorem 8.3.3, \cite{Howie}, applied for monoids $\mathcal{G}(S)$ and $\hat{U}$, implies that $\mu(d) \in \text{Dom}_{\mathcal{G}(S)}(\hat{U})$. But $\text{Dom}_{\mathcal{G}(S)}(\hat{U})=\hat{U}$ as from theorem 8.3.6, \cite{Howie} every inverse semigroup is absolutely closed, whence $\mu(d) \in \hat{U}$.

Conversely, suppose that $\mu(d) \in \hat{U}$ and we want to show that $d \in \text{WDom}_{S}(U)$. Let $G$ be a group and $f,g: S \rightarrow G$ two monoid homomorphisms that coincide in $U$, therefore the group homomorphisms $\hat{f}, \hat{g}: \mathcal{G}(S) \rightarrow G$ of the universal property of $\mu$ coincide in $\hat{U}$ which, from our assumption, implies that $\hat{f}(\mu(d))=\hat{g}(\mu(d))$, and then $f(d)=g(d)$ proving that $d \in \text{WDom}_{S}(U)$.
\end{proof}

Given a presentation $\mathcal{P}=( \mathbf{x},\mathbf{r})$ for a group $G$, we consider the following crossed module. If $\mathcal{G}(\Upsilon)$ is the universal group associated with $\mathcal{P}$ and $\hat{F}$ is the free group on $\mathbf{x}$, then we define 
\begin{equation*}
\tilde{\theta}: \mathcal{G}(\Upsilon) \rightarrow \hat{F} \text{ by } \mu\sigma(^{u}{r})^{\varepsilon} \mapsto ur^{\varepsilon}u^{-1}.
\end{equation*}
An action of $\hat{F}$ on $\mathcal{G}(\Upsilon)$ is given by $^{v}(\mu \sigma (^{u}r)^{\varepsilon})=\mu \sigma(^{vu}r)^{\varepsilon}$ for every $v \in \hat{F}$ and every generator $\mu \sigma((^{u}r)^{\varepsilon})$ of $\mathcal{G}(\Upsilon)$. It is easy to check that the triple $(\mathcal{G}(\Upsilon),\hat{F},\tilde{\theta})$ is a crossed module over $\hat{F}$. The elements of $\text{Ker}(\tilde{\theta})$ are central, therefore $\text{Ker}(\tilde{\theta})$ is an abelian subgroup of $\mathcal{G}(\Upsilon)$ on which $G$ acts on the left by the rule 
$$^{g} (\mu \sigma(a_{1},...,a_{n})\iota\mu \sigma (b_{1},...,b_{m}))=\mu \sigma (^{w}{a_{1}},...,^{w}{a_{n}})\iota \mu \sigma(^{w}{b_{1}},...,^{w}{b_{m}}),$$
where $w$ is a word in $\hat{F}$ representing $g$. With this action $\text{Ker}(\tilde{\theta})$ becomes a left $G$-module which we call \textit{the generalized module of identities for} $\mathcal{P}$ and is denoted by $\tilde{\Pi}$. Also we note that $\hat{\mathfrak{U}}$ is a sub $G$-module of $\tilde{\Pi}$. The module of identities $\pi$ for $\mathcal{P}$ is obtained from $\tilde{\Pi}$ by factoring out $\hat{\mathfrak{U}}$. In terms of $\tilde{\Pi}$ and $\hat{\mathfrak{U}}$ we prove the following analogue of theorem 3.1 of \cite{papa-attach}.

\begin{theorem} \label{wdom}
The following assertions are equivalent.
\begin{itemize}
\item [(i)] The presentation $\mathcal{P}=( \mathbf{x},\mathbf{r})$ is aspherical.
\item [(ii)] For every identity $Y$-sequence $d$, $d \in \text{WDom}_{\Upsilon}(\mathfrak{U})$.
\item [(iii)] $\tilde{\Pi}=\hat{\mathfrak{U}}$.
\end{itemize}
\end{theorem}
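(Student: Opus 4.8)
The plan is to establish the cycle (i) $\Rightarrow$ (ii) $\Rightarrow$ (iii) $\Rightarrow$ (i). The only external inputs I would need are Proposition 14 of \cite{BH} --- used both in the form ``$\mathcal P$ is aspherical iff every identity $Y$-sequence is Peiffer equivalent to the empty sequence'' and in the form ``$\mathcal P$ is aspherical iff $\pi=0$'', the latter combined with the already recorded identification $\pi=\tilde\Pi/\hat{\mathfrak{U}}$ --- and Proposition \ref{wd prop} above.

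For (i) $\Rightarrow$ (ii), I would fix an identity $Y$-sequence and regard it as $d=\sigma(\alpha)\in\Upsilon$. By Proposition 14, $\alpha$ is Peiffer equivalent to the empty sequence, so by the correspondence set up after Lemma \ref{central} one has $\sigma(\alpha)\sim_{\mathfrak{U}}1$ in $\Upsilon$. Running along a finite chain $1=t_{0},t_{1},\dots,t_{k}=\sigma(\alpha)$ realising this equivalence, each consecutive pair differs by multiplication by a generator $\sigma(a)\sigma(a^{-1})$ of $\mathfrak{U}$ or by the reverse of such a multiplication, and since these generators are central (Lemma \ref{central}) the factor may always be kept on the right. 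Applying $\mu$ turns every step into right multiplication by an element $\mu(\sigma(a)\sigma(a^{-1}))^{\pm1}\in\hat{\mathfrak{U}}$, so starting from $\mu(t_{0})=1$ we obtain $\mu(d)\in\hat{\mathfrak{U}}$, and Proposition \ref{wd prop} gives exactly $d\in\mathrm{WDom}_{\Upsilon}(\mathfrak{U})$.

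For (ii) $\Rightarrow$ (iii), the heart of the matter is the structural claim that every element of $\tilde\Pi$ has the form $\mu\sigma(\beta)\cdot w$ with $\beta$ an identity $Y$-sequence and $w\in\hat{\mathfrak{U}}$. To prove it I would observe that in $\mathcal G(\Upsilon)$ one has $\mu\sigma((^{u}r)^{\varepsilon})\,\mu\sigma((^{u}r)^{-\varepsilon})=\mu\!\left(\sigma((^{u}r)^{\varepsilon})\sigma((^{u}r)^{-\varepsilon})\right)$, and the right-hand side is a generator of $\mathfrak{U}$, hence lies in $\hat{\mathfrak{U}}$ and is central; consequently the inverse of any group generator of $\mathcal G(\Upsilon)$ is another group generator times a central element of $\hat{\mathfrak{U}}$. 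Rewriting an arbitrary $g\in\mathcal G(\Upsilon)$ in this way and collecting the central factors on the right gives $g=\mu\sigma(\beta)\cdot w$ with $\beta\in\mathrm{FM}(Y\cup Y^{-1})$ and $w\in\hat{\mathfrak{U}}$; if moreover $g\in\tilde\Pi$, then since $\hat{\mathfrak{U}}\subseteq\tilde\Pi$ forces $\tilde\theta(w)=1$ we get $\tilde\theta(\mu\sigma(\beta))=1$, which is precisely the statement that $\beta$ is an identity $Y$-sequence. Granting the claim, (ii) together with Proposition \ref{wd prop} yields $\mu\sigma(\beta)\in\hat{\mathfrak{U}}$ for every identity $Y$-sequence $\beta$, whence $\tilde\Pi\subseteq\hat{\mathfrak{U}}$; the reverse inclusion is already known, so $\tilde\Pi=\hat{\mathfrak{U}}$.

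Finally (iii) $\Rightarrow$ (i) is immediate: $\tilde\Pi=\hat{\mathfrak{U}}$ forces $\pi=\tilde\Pi/\hat{\mathfrak{U}}=0$, so $\mathcal P$ is aspherical by Proposition 14. The only step that requires real care is the structural description of $\tilde\Pi$ used in (ii) $\Rightarrow$ (iii); the mild subtlety there is bookkeeping the central $\hat{\mathfrak{U}}$-corrections that appear when an arbitrary word of $\mathcal G(\Upsilon)$ is normalised to the form $\mu\sigma(\beta)\cdot w$, but Lemma \ref{central} reduces this to routine commutation. Everything else is a direct appeal to the two quoted propositions.
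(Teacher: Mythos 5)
Your argument is correct, and it follows the same overall cycle $(i)\Rightarrow(ii)\Rightarrow(iii)\Rightarrow(i)$ with the same two external inputs (Proposition 14 of \cite{BH} and Proposition \ref{wd prop}); the middle implication is essentially identical to the paper's (the paper phrases your ``structural claim'' as the reduction to the case where no $\iota(\mu\sigma(^{u}r)^{\varepsilon})$ occurs, followed by lifting along $\mu$). The two outer implications, however, are argued differently. For $(i)\Rightarrow(ii)$ the paper verifies the definition of $\text{WDom}_{\Upsilon}(\mathfrak{U})$ directly, by induction on the minimal number $h(d)$ of insertions and deletions taking $d$ to $1$ and using cancellation in the target group; you instead push the whole $\sim_{\mathfrak{U}}$-chain into $\mathcal{G}(\Upsilon)$, conclude $\mu(d)\in\hat{\mathfrak{U}}$, and invoke the converse direction of Proposition \ref{wd prop}. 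Your route is cleaner and in effect proves $(i)\Rightarrow(iii)$ en passant, at the cost of making Proposition \ref{wd prop} (hence the zigzag machinery) load-bearing in both directions, whereas the paper's induction is self-contained. For $(iii)\Rightarrow(i)$ the paper realises $H/P$ as the quotient $\nu:\mathcal{G}(\Upsilon)\to H/P$ killing the elements $\mu\sigma(a)\mu\sigma(a^{-1})$, uses $d\in\text{WDom}_{\Upsilon}(\mathfrak{U})$ to force $\nu\mu(d)=1$, and then cites Theorem 2.7, p.~71 of \cite{cha}; you instead appeal to the identification $\pi=\tilde{\Pi}/\hat{\mathfrak{U}}$ together with Proposition 14(ii). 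That is shorter, but note that the identification is only asserted, not proved, in the paragraph preceding the theorem; to make your step airtight you should record the one-line justification that $\hat{\mathfrak{U}}$ is central (Lemma \ref{central}), so its normal closure is itself, whence $H/P\cong\mathcal{G}(\Upsilon)/\hat{\mathfrak{U}}$ and $\pi=\ker\delta\cong\tilde{\Pi}/\hat{\mathfrak{U}}$. With that remark added, both of your variants are complete proofs.
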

\begin{proof}
$(i) \Rightarrow (ii)$ Let $d=\sigma(a_{1})\cdot \cdot \cdot \sigma(a_{n}) \in \Upsilon$ be any identity $Y$-sequence and as such it has to be Peiffer equivalent to 1. We proceed by showing that $d \in \text{WDom}_{\Upsilon}(\mathfrak{U})$. Let $G$ be any group and $f,g:\Upsilon \rightarrow G$ two monoid homomorphisms that coincide in $\mathfrak{U}$ and we want to show that $f(d)=g(d)$. The proof will be done by induction on the minimal number $h(d)$ of insertions and deletions needed to transform $d=\sigma(a_{1})\cdot \cdot \cdot \sigma(a_{n})$ to $1$. If $h(d)=1$, then $d \in \mathfrak{U}$ and $f(d)=g(d)$. Suppose that $h(d)=n>1$ and let $\tau$ be the first operation performed on $d$ in a series of operations of minimal length. After $\tau$ is performed on $d$, it is obtained an element $d'$ with $h(d')=n-1$. By induction hypothesis, $f(d')=g(d')$ and we want to prove that $f(d)=g(d)$. There are two possible cases for $\tau$. First, $\tau$ is an insertion and let $u=\sigma(a)\sigma(a^{-1}) \in \mathfrak{U}$ be the element inserted. It follows that $f(d')=f(d)f(u)$ and $g(d')=g(d)g(u)$, but $f(u)=g(u)$, therefore from cancellation law in the group $G$ we get $f(d)=g(d)$. Second, $\tau$ is a deletion and let $u=\sigma(a)\sigma(a^{-1}) \in \mathfrak{U}$ be the element deleted, that is $d=d'u$. It follows immediately from the assumptions that $f(d)=g(d)$ proving that $d \in \text{WDom}_{\Upsilon}(\mathfrak{U})$.

$(ii) \Rightarrow (iii)$ Let $\tilde{d} \in \tilde{\Pi}$. We may assume without loss of generality that no $\iota(\mu\sigma(^{u}{r})^{\varepsilon})$ is represented in $\tilde{d}$ for if there is any such occurrence, we can multiply $\tilde{d}$ by $\mu\sigma((^{u}{r})^{\varepsilon}(^{u}{r})^{-\varepsilon})$ to obtain in return $\tilde{d}'$ where $\iota(\mu\sigma(^{u}{r})^{\varepsilon})$ is now replaced by $\mu\sigma((^{u}{r})^{-\varepsilon})$. It is obvious that if $\tilde{d}' \in \hat{\mathfrak{U}}$, then $\tilde{d} \in \hat{\mathfrak{U}}$ and conversely. Let now $d$ be any preimage of $\tilde{d}$ under $\mu$. It is clear that $d$ is an identity $Y$-sequence and as such $d \in \text{WDom}_{\Upsilon}(\mathfrak{U})$. Then proposition \ref{wd prop} implies that $\tilde{d}=\mu(d) \in \hat{\mathfrak{U}}$.

$(iii) \Rightarrow (i)$ Assume that $\tilde{\Pi}=\hat{\mathfrak{U}}$ and we want to show that any identity $Y$-sequence $d$ is Peiffer equivalent to 1. From the assumption for $d$ we have that $\mu(d) \in \hat{\mathfrak{U}}$ and then proposition \ref{wd prop} implies that $d \in \text{WDom}_{\Upsilon}(\mathfrak{U})$. Consider the group $H/P$ as a quotient of $\mathcal{G}(\Upsilon)$ obtained by identifying $\iota(\mu\sigma({^{u}{r}}))$ with $\mu\sigma((^{u}{r})^{-1})$ and let $\nu:\mathcal{G}(\Upsilon) \rightarrow H/P$ be the respective quotient morphism. Writing $\tau$ for the zero morphism from $\Upsilon$ to $H/P$, we see that $\tau$ and the composition $\nu\mu$ coincide in $\mathfrak{U}$, therefore since $d \in \text{WDom}_{\Upsilon}(\mathfrak{U})$, it follows that $\nu\mu(d)=1$ in $H/P$. The asphericity of $\mathcal{P}$ now follows from theorem 2.7, p.71 of \cite{cha}.
\end{proof}

Before we prove our next result we recall the definition of the relation module $\mathcal{N}(\mathcal{P})$. Given $\mathcal{P}=( \mathbf{x},\mathbf{r} )$ a presentation for a group $G$, we let $\alpha: \hat{F} \rightarrow G$ and $\beta: N \rightarrow N/[N,N]$ be the canonical homomorphisms where $N$ is the normal closure of $\mathbf{r}$ in $\hat{F}$ and $[N,N]$ its commutator subgroup. There is a well defined $G$-action on $\mathcal{N}(\mathcal{P})=N/[N,N]$ given by
\begin{equation*}
	w^{\alpha}\cdot s^{\beta}=(w^{-1}sw)^{\beta}
\end{equation*}
for every $w\in \hat{F}$ and $s \in N$. This action extends to an action of $\mathbb{Z}G$ over $\mathcal{N}(\mathcal{P})$ by setting
\begin{equation*}
	(w_{1}^{\alpha} \pm w_{2}^{\alpha})\cdot s^{\beta}=(w_{1}^{-1}sw_{1}w_{2}^{-1}s^{\pm1}w_{2})^{\beta}.
\end{equation*}
When $\cP$ is aspherical, the basis of $\mathcal{N}(\mathcal{P})$ as a free $\mathbb{Z}G$ module is the set of elements $r^{\beta}$ with $r \in \mathbf{r}$.

\begin{proposition} \label{free}
	If $\mathcal{P}$ is aspherical, then $\hat{\mathfrak{U}}$ is a free $G$-module with bases equipotent to the set $\mathbf{r}$. 
\end{proposition}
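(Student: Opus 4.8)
The plan is to exhibit an explicit free $G$-module on the index set $\mathbf{r}$ and to identify it with $\hat{\mathfrak{U}}$. Recall that $\mathfrak{U}$ is generated as a monoid by the elements $\sigma(a)\sigma(a^{-1})$ with $a \in Y\cup Y^{-1}$, and these are central in $\Upsilon$ by Lemma \ref{central}; hence $\hat{\mathfrak{U}} = \mu(\mathfrak{U})$ is a central, abelian subgroup of $\mathcal{G}(\Upsilon)$ generated by the elements $\mu\sigma(a)\mu\sigma(a^{-1})$. Writing $a = (^{u}r)^{\varepsilon}$, note that $\tilde\theta$ sends $\mu\sigma((^{u}r)^{\varepsilon})\mu\sigma((^{u}r)^{-\varepsilon})$ to $ur^{\varepsilon}u^{-1}\cdot ur^{-\varepsilon}u^{-1}=1$, so indeed $\hat{\mathfrak U}\subseteq \tilde\Pi = \operatorname{Ker}(\tilde\theta)$, and the $G$-action on $\tilde\Pi$ restricts to $\hat{\mathfrak U}$ via $^{w}\bigl(\mu\sigma((^{u}r)^{\varepsilon})\mu\sigma((^{u}r)^{-\varepsilon})\bigr)=\mu\sigma((^{wu}r)^{\varepsilon})\mu\sigma((^{wu}r)^{-\varepsilon})$. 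So the generators of $\hat{\mathfrak U}$ as an abelian group are naturally indexed by triples $(u,r,\varepsilon)$, and the $G$-action permutes the $u$-coordinate by left multiplication; one checks the generator attached to $(u,r,\varepsilon)$ equals the one attached to $(u,r,-\varepsilon)$ up to inversion (these are mutually inverse in $\hat{\mathfrak U}$), so as a $G$-module $\hat{\mathfrak U}$ is generated by the orbit representatives $e_r := \mu\sigma(({^{1}r}))\,\mu\sigma(({^{1}r})^{-1})$, one for each $r\in\mathbf{r}$. This gives a surjection of the free $\mathbb{Z}G$-module $\bigoplus_{r\in\mathbf r}\mathbb ZG\cdot e_r$ onto $\hat{\mathfrak U}$; the content of the proposition is that it is injective.

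For injectivity I would compare $\hat{\mathfrak U}$ with the relation module $\mathcal N(\mathcal P)$. There is a natural map $\hat{\mathfrak U}\to \mathcal N(\mathcal P)$: send $\mu\sigma((^{u}r)^{\varepsilon})\mu\sigma((^{u}r)^{-\varepsilon})\mapsto (ur^{\varepsilon}u^{-1})^{\beta}+(ur^{-\varepsilon}u^{-1})^{\beta}=0$ — that is the wrong target. Instead the right comparison uses the map $H/P\to N$, or more precisely one builds a $G$-module homomorphism $\Phi:\hat{\mathfrak U}\to \mathcal N(\mathcal P)$ by declaring $\Phi(e_r)=r^{\beta}$ and extending $G$-linearly; this is well defined because the defining relations of $\mathcal{G}(\Upsilon)$ (the Peiffer relations $ab = {^{\theta(a)}b}\,a$) become, after abelianizing $N$, exactly the standard identities satisfied by the $r^\beta$ in $\mathcal N(\mathcal P)$ — concretely, $\mu\sigma((^{u}r)^{\varepsilon})$ maps to $(ur^{\varepsilon}u^{-1})^{\beta}=\alpha(u)(\varepsilon r^{\beta})$ under the boundary-type map, and the products $\sigma(a)\sigma(a^{-1})$ land in the image exactly as the generator $e_r$ does with value $r^\beta - r^\beta\cdots$; one has to be slightly careful with signs, but the upshot is a commuting triangle from the free module through $\hat{\mathfrak U}$ to $\mathcal N(\mathcal P)$ whose composite free-to-$\mathcal N(\mathcal P)$ map sends the free basis to $\{r^{\beta}:r\in\mathbf r\}$. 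Since $\mathcal P$ is aspherical, Proposition 14 (iii) says precisely that this last map is an isomorphism onto $\mathcal N(\mathcal P)$; hence the free-to-$\hat{\mathfrak U}$ surjection has trivial kernel, which is the claim.

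The step I expect to be the main obstacle is constructing the comparison map $\Phi:\hat{\mathfrak U}\to\mathcal N(\mathcal P)$ with the correct normalization so that the triangle genuinely commutes. The subtlety is that in $\mathcal{G}(\Upsilon)$ the inverses $\mu\sigma((^{u}r)^{\varepsilon})$ and $\mu\sigma((^{u}r)^{-\varepsilon})$ do \emph{not} cancel (that is the whole point of passing from $H/P$ to $\mathcal{G}(\Upsilon)$), whereas in $\mathcal N(\mathcal P)$ the corresponding elements $(ur^{\varepsilon}u^{-1})^{\beta}$ and $(ur^{-\varepsilon}u^{-1})^{\beta}$ are negatives of each other; so one cannot simply project. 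The resolution is to observe that $\hat{\mathfrak U}$ is the kernel of the quotient map $\mathcal{G}(\Upsilon)\twoheadrightarrow H/P$ restricted appropriately — equivalently, by Theorem \ref{wdom} and its proof, $\hat{\mathfrak U}$ sits inside $\tilde\Pi$ exactly as the ``insertion part'' — and then to use the short exact sequence $0\to\hat{\mathfrak U}\to\tilde\Pi\to\pi\to 0$ together with asphericity ($\pi=0$, so $\tilde\Pi=\hat{\mathfrak U}$) to transport the known freeness of $\mathcal N(\mathcal P)\cong$ (the relation module, free by Proposition 14(iii)) onto $\hat{\mathfrak U}$ via the standard isomorphism between the relation module and $H/P$ for aspherical presentations. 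Once the bookkeeping of signs and of the $2$-to-$1$ indexing $(u,r,\pm\varepsilon)$ is pinned down, the rest is a routine diagram chase.
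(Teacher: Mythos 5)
There is a genuine gap: you correctly identify the crux --- producing a comparison map between $\hat{\mathfrak{U}}$ and $\mathcal{N}(\mathcal{P})$ with the ``correct normalization'' --- but you do not actually produce one. Your first attempt, defining $\Phi:\hat{\mathfrak{U}}\to\mathcal{N}(\mathcal{P})$ by $\Phi(e_{r})=r^{\beta}$ and ``extending $G$-linearly,'' is circular: extending from prescribed values on the $e_{r}$ presupposes that $\hat{\mathfrak{U}}$ is free on them (or at least that you control all its relations), which is exactly what is to be proved, and your own computation shows the map induced naturally from $\mathcal{G}(\Upsilon)$ sends $e_{r}$ to $r^{\beta}-r^{\beta}=0$. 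Your fallback --- using $\tilde{\Pi}=\hat{\mathfrak{U}}$ and ``transporting the freeness of $\mathcal{N}(\mathcal{P})$'' through $\mathcal{G}(\Upsilon)\twoheadrightarrow H/P$ --- is not an argument: that equality only exhibits $\hat{\mathfrak{U}}$ as a central subgroup of $\mathcal{G}(\Upsilon)$ with quotient $N$, and no identification with the relation module falls out of it. (A small side error: the generators attached to $(u,r,\varepsilon)$ and $(u,r,-\varepsilon)$ are equal, since $\mu\sigma(aa^{-1})=\mu\sigma(a^{-1}a)$ in $\mathcal{G}(\Upsilon)$; they are not mutually inverse, because $\mu\sigma(a^{-1})$ is not the group inverse of $\mu\sigma(a)$.)

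The paper resolves the normalization problem with a device you did not find, and it runs the comparison in the opposite direction. It defines $\Omega:\mathcal{N}(\mathcal{P})\to\hat{\mathfrak{U}}$ by $r^{\beta}\mapsto\mu\sigma(rr^{-1})$; this is automatically well defined because asphericity makes $\mathcal{N}(\mathcal{P})$ free on the $r^{\beta}$, and it is surjective essentially by your first paragraph. For injectivity it introduces the sign-forgetting assignment $\gamma:FM(Y\cup Y^{-1})\to\mathcal{N}(\mathcal{P})$, $(^{u}r)^{\varepsilon}\mapsto u^{\alpha}\cdot r^{\beta}$ \emph{independently of} $\varepsilon$. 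This respects the Peiffer relations of $\Upsilon$ (the target is abelian and the action factors through $G$), hence induces $\hat{g}:\mathcal{G}(\Upsilon)\to\mathcal{N}(\mathcal{P})$ with $\hat{g}\bigl(\mu\sigma(^{u}r(^{u}r)^{-1})\bigr)=2\,u^{\alpha}\cdot r^{\beta}$; thus $\hat{g}\circ\Omega$ is multiplication by $2$, which is injective on the torsion-free module $\mathcal{N}(\mathcal{P})$, so $\Omega$ is injective. The resulting doubling is precisely the ``careful with the signs'' step your sketch leaves unresolved; without $\gamma$ or an equivalent, the proposal does not close.
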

\begin{proof}
	The result follows if we show that $\hat{\mathfrak{U}} \cong \mathcal{N}(\mathcal{P})$ as $G$-modules. For this we define
	$$\Omega: \mathcal{N}(\mathcal{P}) \rightarrow \hat{\mathfrak{U}}$$
	on free generators by $r^{\beta} \mapsto \mu \sigma (rr^{-1})$ which is clearly well defined and a surjective morphism of $G$-modules. Now we prove that $\Omega$ is injective. Let 
	$$\xi= \sum_{i=1}^{n} u_{i}^{\alpha}\cdot r_{i}^{\beta} - \sum_{j=n+1}^{m} v_{j}^{\alpha} \cdot r_{j}^{\beta} \in \text{Ker}(\Omega),$$
	which means that
	\begin{equation} \label{ker}
		\prod_{i=1}^{n} \mu \sigma (^{u_{i}}r_{i}(^{u_{i}}r_{i})^{-1}) \iota \left( \prod_{j=n+1}^{m} \mu \sigma (^{v_{j}}r_{j}(^{v_{j}}r_{j})^{-1})  \right)=1.
	\end{equation}
	To prove that $\xi=0$ we will proceed as follows. Define 
	\begin{equation*}
		\gamma: FM(Y \cup Y^{-1}) \rightarrow \mathcal{N}(\mathcal{P}) 
	\end{equation*}
	on free generators as follows
	\begin{equation*}
		(^{u}r)^{\varepsilon} \mapsto u^{\alpha}\cdot r^{\beta}.
	\end{equation*}
	It is easy to see that $\gamma$ is compatible with the defining relations of $\Upsilon$, hence there is $g:\Upsilon \rightarrow \mathcal{N}(\mathcal{P})$ and then the universal property of $\mu$ implies the existence of $\hat{g}: \mathcal{G}(\Upsilon) \rightarrow \mathcal{N}(\mathcal{P})$ such that $\hat{g}\mu=g$. If we apply now $\hat{g}$ on both sides of (\ref{ker}) obtain
	$$2 \cdot \sum_{i=1}^{n} u_{i}^{\alpha}\cdot r_{i}^{\beta} - 2 \cdot \sum_{j=n+1}^{m} v_{j}^{\alpha} \cdot r_{j}^{\beta}=0,$$
	proving that $\xi=0$.
\end{proof}

\section{Proof of the main theorem}

The proof of our main theorem is heavily based on two papers. The first one is \cite{SES} where McGlashan et al extended the Squier complex of a monoid presentation to a 3-complex and obtained a short exact sequence involving data from this complex. This sequence will be crucial in the proof of our theorem. The second one is \cite{LDHM2} where Pride realizes the second homotopy group associated with a group presentation as the first homotopy group of a certain extension of the Squire complex arising from that presentation. For the sake of completeness we have added below a number of sections which tend to explain the material that is used in our proofs. Section \ref{rws} gives some basic material about rewriting systems since they are used in the construction of our complexes and in our proofs. In Section \ref{sqc} we explain in some details how the Squier complex of a monoid presentation is defined and the cellular chain complex associated with it. Further in section \ref{exsqc} we give the definition of the extended Squier complex as it appears in \cite{SES} and some of the homological consequences that will be used in our proofs. Section \ref{osqc} shows how the 0 and the 1-skeleton of the Squier complex is well ordered, and in the case when the rewriting system is complete, it shows how these well orders induce another well order in the set of all 2-cells of the extended 3-complex. This new well order will be used further in section \ref{shp}. Section \ref{kb} is about the Knuth-Bendix completion procedure since it is used to give a new and shorter proof of the key result of \cite{SES} regarding the short exact sequence we mentioned above. This proof is given in section \ref{shp}. Section \ref{pc} is devoted to introducing the Pride complex associated with a group presentation and to explain ideas and results from \cite{LDHM2} since we make extensive use of them in our proofs. 

Finally, it is important to mention that theorem 6.6 of \cite{OK2002} is vital in the proof of key lemma \ref{c-rel-ext}.

\subsection{Some basic concepts from rewriting systems} \label{rws}

A rewriting system is a pair $\cP=(\mathbf{x}, \mathbf{r})$ where $\mathbf{x}$ is a non empty set and $\mathbf{r}$ is a set of rules $r=(r_{+1},r_{-1}) \in F \times F$ where $F$ is the free monoid on $\mathbf{x}$. Related with $\mathbf{r}$ there is the so called the one single step reduction of words
$$\rightarrow_{\mathbf{r}}=\{(ur_{+1}v,ur_{-1}v)|r \in \mathbf{r} \text{ and } u, v \in F\}.$$
The reflexive and transitive closure of $\rightarrow_{\mathbf{r}}$ is denoted by $\rightarrow_{\mathbf{r}}^{\ast}$, and the reflexive, transitive and symmetric closure is denoted by $\leftrightarrow_{\mathbf{r}}^{\ast}$ and is also known as the Thue congruence generated by $\mathbf{r}$. The quotient $F/\leftrightarrow_{\mathbf{r}}^{\ast}$ forms a monoid $S$ whose elements are the congruence classes $\bar{u}$ of words $u \in F$, and the multiplication is given by $\bar{u}\cdot \bar{v}= \overline{uv}$. We say that the monoid $S$ is given by $\cP$, or that $\cP$ is a presentation for $S$. 

A rewriting system $\cP=(\mathbf{x}, \mathbf{r})$ is noetherian if there is no infinite chain
$$w \rightarrow_{\mathbf{r}}  w' \rightarrow_{\mathbf{r}}  \dots$$
and is confluent if whenever we have $w \rightarrow_{\mathbf{r}}^{\ast} w_{1}$ and $w \rightarrow_{\mathbf{r}}^{\ast} w_{2}$, then there is $z \in F$ such that $w_{1} \rightarrow_{\mathbf{r}}^{\ast} z$ and $w_{2} \rightarrow_{\mathbf{r}}^{\ast} z$. A rewriting system $\cP=(\mathbf{x}, \mathbf{r})$ is complete if it is both noetherin and confluent. 

Let $\cP=(\mathbf{x}, \mathbf{r})$ be a presentation for a monoid $S$. The natural epimorphism 
$$F \rightarrow S \text{ such that } w \mapsto \bar{w},$$ 
where $F$ is the free monoid on $\mathbf{x}$, extends linearly to a ring epimorphism 
$$\mathbb{Z}F \rightarrow \mathbb{Z}S$$
of the corresponding integral monoid rings. The kernel of this epimorphism is denoted by $J$ which as an abelian group is generated by all
$$u(r_{+1}-r_{-1})v \text{ where } u, v \in F \text{ and } r \in \mathbf{r}.$$
As a $(\mathbb{Z}F, \mathbb{Z}F)$-bimodule $J$ is generated by all $r_{+1}-r_{-1}$.

\subsection{The Squier complex of a monoid presentation} \label{sqc}

The material included in this section is taken from \cite{SES} (see also \cite{Sth}). At the end of the section we give shortly the respective terminology used in \cite{OK2002} which differs slightly from ours. The reason we explain this terminology is the use of theorem 6.6 of \cite{OK2002} in the proof of our key lemma \ref{c-rel-ext}. 

For every rewriting system $\cP=(\mathbf{x}, \mathbf{r})$ we can define its graph of derivations $\Gamma(\cP)$ whose vertices are the elements of $F$, and the edges are all quadruples 
$$e=(w,r, \varepsilon, w') \text{ where } w,w' \in F, \varepsilon=\pm 1, r \in \mathbf{r},$$
with initial, terminal and inverse functions
$$\iota e= wr_{\varepsilon}w', \tau e= wr_{-\varepsilon}w' \text{ and } e^{-1}=(w,r, -\varepsilon, w').$$
The edge $e$ is called positive if $\varepsilon=1$. We can think of $\Gamma(\cP)$ as a one dimensional cw-complex with 0-cells all the elements of $F$ and with 1-cells all positive edges. We note here that $e^{-1}=(w,r, -1, w')$ is not a new edge attached to the complex, but is defined to mean the topological inverse of the attaching map of $e=(w,r, 1, w')$. A path $p$ of length $n$ in $\Gamma(\cP)$ is a sequence of edges $p=e_{1} \dots e_{i}e_{i+1} \dots e_{n}$ where $\tau e_{i}=\iota e_{i+1}$ for $1 \leq i \leq n-1$. It is called positive if the edges are positive, and is called closed if $\iota e_{1}=\tau e_{n}$.

There is a natural two-sided action of $F$ on $\Gamma(\cP)$. The action on vertices is given by the multiplication of $F$, and the action of $z,z' \in F$ on edges $e=(w,r, \varepsilon, w')$ is given by
$$z.e.z'=(zw,r, \varepsilon, w'z'),$$
and sometimes is called translation. This action extends to paths in the obvious way. 

Note that there is a 1-1 correspondence between the elements of $S$ given by $\cP$ and the connected components of $\Gamma(\cP)$ since  $u \leftrightarrow_{\mathbf{r}}^{\ast} v$ if and only if there is a path in $\Gamma(\cP)$ connecting $u$ with $v$. Also note that the generators of $J$ as an abelian group are the elements $\iota e- \tau e$ where $e$ is a positive edge. 

We say that two positive edges $e_{1}$ and $e_{2}$ are disjoint if they can be written in the form
$$e_{1}=f_{1}. \iota f_{2}, f_{2}=\iota f_{1} f_{2}$$
where $f_{1}, f_{2}$ are positive edges. We say that an edge $e$ is left reduced (resp. right reduced) if it cannot be written in the form $u.f$ (resp. $f. u$) for some non empty word $u \in F$ and an edge $f$. A pair of positive edges with the same initial forms a critical pair it either
\begin{itemize}
	\item [(1)] One of the pair is both left and right reduced (a critical pair of inclusion type), or
	\item[(2)] One of the pair is left reduced but not right reduced, and the other is right reduced but not left reduced (a critical pair of overlapping type).
\end{itemize}
We say that a critical pair $(e_{1},e_{2})$ is resolvable if there are positive paths (a resolution of the critical pair) from $\tau e_{1}$ and $\tau e_{2}$ to a common vertex. It is well known \cite{Newman} that, when the system $\cP=(\mathbf{x}, \mathbf{r})$ is noetherian and if all the critical pairs are resolvable, then the system is confluent.

The Squier complex $\mathcal{D}(\cP)$ associated with $\cP$ is a combinatorial 2-complex with 1-skeleton $\Gamma(\cP)$, to which, for each pair of positive edges $e,f$ a 2-cell $[e,f]$ is attached along the closed path
$$\partial[e,f]=(e. \iota f)(\tau e. f) (e.\tau f)^{-1}(\iota e. f)^{-1}.$$
Sometimes we refer the 2-cell $[e,f]$ as a square 2-cell. The two-sided action of $F$ on $\Gamma(\cP)$ extends to the 2-cells by
$$w.[e,f].w'=[w.e,f.w'] \text{ where } w,w' \in F, e, f \text{ are positive edges}.$$
We have the chain complex
$$\xymatrix{\mathbf{C}(\mathcal{D}):  C_{2} \ar[r]^-{\partial_{2}} & C_{1} \ar[r]^{\partial_{1}} & C_{0} \ar[r] & 0},$$
where $C_{0}$, $C_{1}$, $C_{2}$ and $C_{3}$ are the free abelian groups generated by all 0-cells, positive edges, 2-cells, and 3-cells respectively. The boundary maps are given by
$$\partial_{1} e=\iota e- \tau e \text{ where } e \text{ is a positive edge},$$
$$\partial_{2}[e,f]=e.(\iota f - \tau f)-(\iota e- \tau e).f \text{ where } e,f \text{ are positive edges}.$$

In the paper \cite{OK2002} of Otto and Kobayashi, a monoid presentation is denoted by $(\Sigma, R)$ and the rewriting rules of $R$ are denoted by $r \rightarrow \ell$. The edges of the graph of derivations in \cite{OK2002} are denoted by $(x,u,v,y)$ where $x,y \in \Sigma^{\ast}$ and $(u \rightarrow v) \in E=R \cup R^{-1}$. In \cite{OK2002} it is considered the set of closed paths
$$D=\{e_{1}xu_{2} \circ v_{1}xe_{2} \circ e_{1}^{-1}xv_{2} \circ u_{1}xe_{2}^{-1}| e_{1}=(u_{1},v_{1})\in R, e_{2}=(u_{2},v_{2})\in R, x \in \Sigma^{\ast}\}.$$
It is important to observe that each circuit of $D$ is in fact the boundary of a square 2-cell as the following shows
$$e_{1}xu_{2} \circ v_{1}xe_{2} \circ e_{1}^{-1}xv_{2} \circ u_{1}xe_{2}^{-1}=\partial[(1,r_{1},1,1),(x,r_{2},1,1)],$$
where $r_{1}=e_{1}$ and $r_{2}=e_{2}$. The free $\mathbb{Z} \Sigma^{\ast}$ bi-module $\mathbb{Z} \Sigma^{\ast} \cdot R \cdot \mathbb{Z} \Sigma^{\ast}$ considered in \cite{OK2002} is the abelian group $C_{1}$ of our complex $\mathbf{C}(\mathcal{D})$, and the maps $\partial_{1}$ are the same in both papers. On the other hand, the free $\mathbb{Z} \Sigma^{\ast}$ bi-module $\mathbb{Z} \Sigma^{\ast} \cdot D \cdot \mathbb{Z} \Sigma^{\ast}$ of \cite{OK2002} is the abelian group $C_{2}$ of $\mathbf{C}(\mathcal{D})$, and the maps $\partial_{2}$ are the same in both papers. Finally, the exact sequence of theorem 6.6 of \cite{OK2002} in our notations will be
$$\xymatrix{C_{2} \ar[r]^-{\partial_{2}} & J.R.\mathbb{Z} \Sigma^{\ast}+\mathbb{Z} \Sigma^{\ast}. R. J \ar[r]^-{\partial_{1}} & J^{2} \ar[r] & 0 .}$$
The interpretation of the exactness in the middle of the above sequence is that $Ker \partial_{1} \cap (J.R.\mathbb{Z} \Sigma^{\ast}+\mathbb{Z} \Sigma^{\ast}. R. J)=Im \partial_{2}$.

\subsection{The extended Squier complex} \label{exsqc}

Assume now that $\mathbf{p}$ is a set closed paths in $\mathcal{D}(\cP)$. In \cite{SES} the complex $\mathcal{D}(\cP)$ has been extended to a 3-complex $(\mathcal{D},\mathbf{p})$ in the following way. We add to $\mathcal{D}(\cP)$ additional 2-cells $[u,p,v]$ attached along the closed path
$$\partial[u,p,v]=u.p.v \text{ where } u, v \in F, \text{ and } p \in \mathbf{p}.$$
The construction is then completed by adding 3-cells as follows. For each positive edge $f$ and each 2-cell $\sigma$ with $\partial \sigma= e_{1}^{\varepsilon_{1}}\dots e_{n}^{\varepsilon_{n}}$,  3-cells $[f,\sigma]$ and $[\sigma, f]$ are attached to the 2-skeleton by mapping their boundaries to respectively: 
\begin{itemize}
	\item [(1)] the 2-cells $\iota f. \sigma$, $\tau f. \sigma$ together with 2-cells $[f,e_{i}]$ for $1 \leq i \leq n$,
	\item[(2)] the 2-cells $\sigma. \iota f$, $\sigma. \tau f$ together with 2-cells $[e_{i},f]$ for $1 \leq i \leq n$.
\end{itemize}
The 2-sided action of $F$ on the 2-skeleton extends naturally to the 3-cells. For $[f,\sigma]$, $[\sigma, f]$ and $u, v \in F$,
$$u. [f, \sigma].v= [u.f, \sigma. v] \text{ and } u. [\sigma, f].v=[u.\sigma, f.v].$$

The complex $\mathbf{C}(\mathcal{D})$ now extends to
$$\xymatrix{\mathbf{C}(\mathcal{D}, \mathbf{p}): 0 \ar[r] & C_{3}^{\mathbf{p}} \ar[r]^-{\tilde{\partial}_{3}} & C_{2}^{\mathbf{p}} \oplus C_{2} \ar[r]^-{\tilde{\partial}_{2}} & C_{1} \ar[r]^{\partial_{1}} & C_{0} \ar[r] & 0}$$
where $C_{3}^{\mathbf{p}}$ is the free abelian group generated by the set of all 3-cells, and $C_{2}^{\mathbf{p}}$ is the free abelian group generated by the set of all newly added 2-cells $\sigma=[u,p,v]$. The boundary map $\tilde{\partial}_{2}$ restricted to $C_{2}$ is $\partial_{2}$, and for every $[u, p,v]$ where $p \in \mathbf{p}$ with $\partial p= f_{1}^{\delta_{1}}\dots f_{n}^{\delta_{n}}$, it is defined
$$\tilde{\partial}_{2}[u,p,v]=\sum_{i=1}^{n}\delta_{i}u.f_{i}.v.$$
Finally, the definition of $\tilde{\partial}_{3}$ is done in the following way. For every positive edge $f$ and every 2-cell $\sigma$ with $\tilde{\partial}_{2}\sigma= \sum_{i=1}^{n}\varepsilon_{i} e_{i}$ we have
\begin{equation} \label{d3a}
\tilde{\partial}_{3}[f,\sigma]= (\iota f - \tau f).\sigma+\sum_{i=1}^{n}\varepsilon_{i}[f,e_{i}],
\end{equation}
and
\begin{equation} \label{d3b}
\tilde{\partial}_{3}[\sigma, f]= \sigma.(\iota f- \tau f)- \sum_{i=1}^{n}\varepsilon_{i}[e_{i},f].
\end{equation}
The definition of the 2-cells $[u,p,v]$ where $u,v \in F, p \in\mathbf{p}$ suggests that $C_{2}^{\mathbf{p}}$ can be regarded as a free $(\mathbb{Z}F, \mathbb{Z}F)$-bimodule with basis 
$$\hat{\mathbf{p}}=\{[1,p,1]| p \in\mathbf{p}\}.$$
This enables us to define a $(\mathbb{Z}F, \mathbb{Z}F)$-homomorphism
$$\varphi: C_{2} \oplus C_{2}^{\mathbf{p}} \rightarrow \mathbb{Z}S\mathbf{p} \mathbb{Z}S$$
by mapping $C_{2}$ to 0, and every 2-cell $[u,p,v]$ to $\bar{u}.p. \bar{v}$. The kernel of $\varphi$ is denoted by $K^{\mathbf{p}}$.

It is shown in \cite{SES} that 
$$K^{\mathbf{p}}=C_{2} + J.\hat{\mathbf{p}}. \mathbb{Z}F+ \mathbb{Z}F. \hat{\mathbf{p}}. J.$$
Also it is shown that $B_{2}(\mathcal{D},\mathbf{p}) \subseteq K^{\mathbf{p}}$ and that the restriction of $\tilde{\partial}_{2}$ on $K^{\mathbf{p}}$ sends $K^{\mathbf{p}}$ onto $B_{1}(\mathcal{D})$, therefore we have the complex
\begin{equation} \label{cxkp}
	\xymatrix{0 \ar[r] & B_{2}(\mathcal{D},\mathbf{p}) \ar[r]^-{incl.} & K^{\mathbf{p}} \ar[r]^-{\tilde{\partial}_{2}} & B_{1}(\mathcal{D})  \ar[r] & 0}
\end{equation}
It is proved in Proposition 14 of \cite{SES} that when $\mathbf{p}$ is a homology trivializer, then the sequence (\ref{cxkp}) is exact. We will give a new proof in section \ref{shp} for the exactness of (\ref{cxkp}). Since the proof uses the so called Knuth-Bendix completion procedure, we will explain this procedure in some details in section \ref{kb}. Before doing that we will introduce in the next section some useful orders in the skeleta of $\mathcal{D}(\cP)$.

\subsection{Ordering the Squier complex} \label{osqc}

As before $\cP=(\mathbf{x}, \mathbf{r})$ is a rewriting system and $\mathcal{D}(\cP)$ its Squier complex. Assume in addition that for every $(r_{+1},r_{-1}) \in \mathbf{r}$, $r_{+1} \neq r_{-1}$. Let $\vartriangleright$ be a well ordering on $\mathbf{x}$. The corresponding length-lexicographical ordering on $F$ is defined as follows. For $u,v \in F$, we write $u >_{llex} v$ if and only if $|u|>|v|$, or $|u|=|v|$, $u=au'$, $v=bv'$ where $a,b \in \mathbf{x}$, $u',v' \in F$, and one of the following holds:
\begin{itemize}
	\item [(i)] $a \vartriangleright b$,
	\item[(ii)] $a=b$, $u' >_{llex} v'$.
\end{itemize}
It turns out that $>_{llex}$ is a well ordering on $F$ (see \cite{Book+Otto}). We can always assume that $>_{llex}$ is compatible with $\mathbf{r}$ in the sense that $r_{+1} >_{llex} r_{-1}$, for if there are rules $(r_{+1},r_{-1})$ satisfying the opposite, we can exchange $r_{+1}$ with $r_{-1}$. Well orderings in $F$ that are compatible with $\mathbf{r}$ are usually called reduction well ordering and are the basis to start the Knuth Bendix completion procedure. 

So far we have defined a reduction well order on the 0-skeleton of $\mathcal{D}(\cP)$ which will be denoted by $\prec_{0}$. This order induces a noetherian (well founded) partial order in the 1-skeleton of $\mathcal{D}(\cP)$ in the following way.
For $e=(u,r,+1,v)$ and $f=(u',r',+1,v')$ positive edges in by $\mathcal{D}(\cP)$, we define $e \prec_{1} f$ if and only if $\iota e = \iota f$, and one of the following occurs:
\begin{itemize}
	\item[(i)] $v'$ is a proper suffix of $v$, or
	\item[(ii)] $v=v'$ and $|r_{+1}|< |r'_{+1}|$, or
	\item[(iii)] $v=v'$, $r_{+1} = r'_{+1}$ and $r_{-1} \prec_{0} r'_{-1}$.
\end{itemize}
It turns out that $\prec_{1}$ is a partial order and that it is well founded. 

Further, assume that $\cP=(\mathbf{x}, \mathbf{r})$ is confluent, so that all the critical pairs of positive edges resolve. In that case, we attach to $\mathcal{D}(\cP)$ 2-cells $\mathbf{p}$ by choosing resolutions for every critical pair of positive edges $(e,f)$ in the following way. If $p_{e}$, $p_{f}$ are positive paths from $\tau e$ and $\tau f$ respectively to a common vertex, then the boundary of the 2-cell $\sigma$ corresponding to $(e,f)$ is 
$$\partial \sigma= e p_{e} p_{f}^{-1}f^{-1}.$$
Also we attach 2-cells $u.\sigma.v$ for every $u,v \in F$ along the loop $u. \partial \sigma. v$. As it is explained in the section \ref{exsqc}, this new 2-complex extends to a 3-complex denoted there by $(\mathcal{D}, \mathbf{p})$. It is important to mention that every 2-cell of $(\mathcal{D}, \mathbf{p})$, including the square 2-cells, is uniquely determined by the pair $(e,f)$ of edges meeting its maximal vertex $w=\iota_{e}=\iota_{f}$ (according to $\prec_{0}$). For this reason, we write the 2-cell as $[w;(e,f)]$. Now we extend the orders $\prec_{0}$ and $\prec_{1}$ to the 2-skeleton of the 3-complex $(\mathcal{D}, \mathbf{p})$ as follows. For every two 2-cells $[w;(e,f)]$ and $[w';(e',f')]$ we say that $[w;(e,f)] \prec_{2} [w';(e',f')]$ if and only if:
\begin{itemize}
	\item [(i)] $w \prec_{0} w'$; or
	\item[(ii)] $w=w'$ and $f\prec_{1} f'$; or
	\item[(iii)] $f=f'$ and $e\prec_{1} e'$. 
\end{itemize}
This is a well founded total order in the set of all 2-cells of $(\mathcal{D}, \mathbf{p})$. 

Under the current assumptions, similarly to 2-cells, every 3-cell is uniquely determined by three positive edges $e_{1} \prec_{1} e_{2} \prec_{1} e_{3}$ with initial the maximal vertex $w$ of the 3-cell, where either $e_{1}$ is disjoint from $e_{2}$ and $e_{3}$, or $e_{3}$ is disjoint from $e_{1}$ and $e_{2}$. For this reason we write the 3-cell by $[w;(e_{1},e_{2},e_{3})]$. By (\ref{d3a}) and (\ref{d3b}) we see that
\begin{equation} \label{d3p}
\tilde{\partial}_{3}[w;(e_{1},e_{2},e_{3})]=[w;(e_{2},e_{3})]-[w;(e_{1},e_{3})]+[w;(e_{1},e_{2})]+\varsigma
\end{equation}
where $\varsigma$ is a 2-chain made up of 2-cells all of which have maximal vertices less than $w$. Also note that the maximal 2-cell represented in $\tilde{\partial}_{3}[w;(e_{1},e_{2},e_{3})]$ is $[w;(e_{2},e_{3})]$.

\subsection{The Knuth-Bendix completion procedure} \label{kb}

The Knuth-Bendix procedure \cite{Book+Otto}, produces a complete system out of any given system and equivalent to it. Given a rewriting system $\cP=(\mathbf{x}, \mathbf{r})$ and a reduction well order $\succ$ on $F$ that is compatible with $\mathbf{r}$ (there is always such one as explained in section \ref{osqc}), one can produce a complete rewriting system $\cP^{\infty}$ that is equivalent to $\cP$ in the following way. Put $\mathbf{r}_{0}=\mathbf{r}$. For each non-resolvable pair of edges $(e,f)$ in $\mathcal{D}(\cP)$ we chose positive path $p_{e}$, $p_{f}$ from $\tau e$ and $\tau f$ respectively to distinct irreducibles. Let $\mathbf{r}_{1}$ be the set of rules obtained from $\mathbf{r}$ by adding for each such critical pair $(e,f)$ the rule $(\tau p_{e}, \tau p_{f})$ if $\tau p_{e} \succ \tau p_{f}$, otherwise adding the rule $\tau p_{f} \succ \tau p_{e}$. It is clear that $\cP_{1}=(\mathbf{x}, \mathbf{r}_{1})$ is equivalent to $\cP=(\mathbf{x}, \mathbf{r})$ and that $\mathbf{r} \subseteq \mathbf{r}_{1}$ where the inclusion is strict if $\cP$ is not complete. Assume by induction that we have defined a sequence of equivalent rewriting systems
$$\cP=(\mathbf{x}, \mathbf{r}_{0}),...,\cP_{n-1}=(\mathbf{x}, \mathbf{r}_{n-1}),\cP_{n}=(\mathbf{x}, \mathbf{r}_{n}),$$
and consequently, an increasing sequence of complexes
$$\mathcal{D}(\cP) \subseteq \dots \subseteq \mathcal{D}(\cP_{n-1}) \subseteq \mathcal{D}(\cP_{n}),$$
where $\cP_{n}=(\mathbf{x}, \mathbf{r}_{n})$ is obtained from $\cP_{n-1}=(\mathbf{x}, \mathbf{r}_{n-1})$ by resolving all the non-resolvable critical pairs of $\mathcal{D}(\cP_{n-1})$. Put $\mathbf{r}_{\infty}=\underset{n \geq 0}{\cup}\mathbf{r}_{n}$ and let $\cP_{\infty}=(\mathbf{x}, \mathbf{r}_{\infty})$ be the resulting rewriting system. The corresponding complex $\mathcal{D}(\cP_{\infty})$ will be latter denoted by $\mathcal{D}^{\infty}$. The rewriting system $\cP_{\infty}=(\mathbf{x}, \mathbf{r}_{\infty})$ is obviously equivalent to $\cP$ and it is complete since it is compatible with the order $\succ$ on $F$ and for every non-resolvable pair $(e,f)$ of edges found in some $\mathcal{D}(\cP_{n})$, there is an edge $g$ in $\mathcal{D}(\cP_{n+1})$ connecting the endpoints of the positive paths $p_{e}$ and $p_{f}$ of $\mathcal{D}(\cP_{n})$.

\subsection{A shorter proof for the exactness of (\ref{cxkp})} \label{shp}

The proof that is provided below is valid in the special case when each 2-cell from $\mathbf{p}$ arises from the resolution of a critical pair. The proof goes through the following stages. The first stage is the same as that of \cite{SES} and for this reason is not presented here in full. In this stage it is proved that (\ref{cxkp}) is exact in the special case when the monoid presentation $\mathcal{M}=\langle \mathbf{x}, \mathbf{r}\rangle $ from which $\mathcal{D}$ is defined, is complete, and the set $\mathbf{p}$ of homology trivializers is obtained by choosing resolutions of critical pairs of $\mathbf{r}$. The proof is roughly as follows. Using (\ref{d3p}), it is shown that every 2-cycle $\xi \in K^{\mathbf{p}}$ is homologous to a 2-cycle $\bar{\xi} \in K^{\mathbf{p}}$ that is obtained from $\xi$ by replacing the maximal 2-cell $\sigma$ represented in $\xi$ by a 2-chain made up of lesser 2-cells than $\sigma$. Then we proceed by Noetherian induction. 

In the second stage, differently from the general case that is considered in \cite{SES}, we assume that we have a monoid presentation $\mathcal{M}=\langle \mathbf{x}, \mathbf{r}\rangle $ (not necessarily complete) and that $H_{1}(\mathcal{D})$ of the corresponding Squier complex $\mathcal{D}$ is trivialized by adding 2-cells $\mathbf{p}$ arising from the resolution of certain critical pairs. Also, the same as in \cite{SES}, we assume that $\mathbf{r}$ is compatible with a length-lexicographic order in the free monoid $F$ on $\mathbf{x}$. Using the Knuth-Bendix procedure, we obtain a new presentation $\mathcal{M}^{\infty}=\langle \mathbf{x}, \mathbf{r}^{\infty}\rangle $ with $\mathbf{r} \subseteq \mathbf{r}^{\infty}$ and where $\mathbf{r}^{\infty}$ is compatible with the order on $F$. The Squier complex $\mathcal{D}^{\infty}$ has trivializer $\mathbf{p}^{\infty}$ obtained by choosing resolution of all critical pairs of $\mathbf{r}^{\infty}$ and as a consequence $\mathbf{p} \subseteq \mathbf{p}^{\infty}$. From the special case of the first stage, we have the exactness of 
$$\xymatrix{0 \ar[r] & B_{2}(\mathcal{D}^{\infty},\mathbf{p}^{\infty}) \ar[r]^-{incl.} & K^{\mathbf{p}^{\infty}} \ar[r]^-{\tilde{\partial}^{\infty}_{2}} & B_{1}(\mathcal{D}^{\infty})  \ar[r] & 0,}$$
where $K^{\mathbf{p}^{\infty}}=C_{2}(\mathcal{D}^{\infty}) + J. \mathbf{p}^{\infty}. \mathbb{Z}F+ \mathbb{Z}F.\mathbf{p}^{\infty}. J$. We will use this and the fact that $\mathbf{p} \subseteq \mathbf{p}^{\infty}$ to prove in a shorter way the exactness of (\ref{cxkp}).

We begin by pointing out that $(\mathcal{D}, \mathbf{p})$ is a subcomplex of $(\mathcal{D}^{\infty}, \mathbf{p}^{\infty})$, therefore for $i=1,2,3$, we have that $C_{i}(\mathcal{D}, \mathbf{p}) \leq C_{i}(\mathcal{D}^{\infty}, \mathbf{p}^{\infty})$. We will define for $i=1,2,3$, retractions $\hat{\rho_{i}}: C_{i}(\mathcal{D}^{\infty}, \mathbf{p}^{\infty}) \rightarrow C_{i}(\mathcal{D}, \mathbf{p})$.

First, for every positive edge $e$ from $(\mathcal{D}^{\infty}, \mathbf{p}^{\infty})$ not belonging to $(\mathcal{D}, \mathbf{p})$, we chose a path $\rho(e)=e_{1}^{\varepsilon_{1}} \dots e_{n}^{\varepsilon_{n}}$ in $(\mathcal{D}, \mathbf{p})$ connecting $\iota e$ with $\tau e$ where every $\varepsilon_{i}=\pm 1$. Relative to this choice we define 
$$\hat{\rho_{1}}: C_{1}(\mathcal{D}^{\infty}, \mathbf{p}^{\infty}) \rightarrow C_{1}(\mathcal{D}, \mathbf{p})$$
by
$$e \mapsto \sum_{i}\varepsilon_{i} e_{i},$$
whenever $e$ is from $(\mathcal{D}^{\infty}, \mathbf{p}^{\infty})$ not belonging to $(\mathcal{D}, \mathbf{p})$, and for positive edges $e$ from $(\mathcal{D}, \mathbf{p})$ we define
$$\hat{\rho_{1}}(e)=e.$$
Thus $\hat{\rho_{1}}$ is a retraction. Before we define a second retraction $\hat{\rho_{2}}:C_{2}(\mathcal{D}^{\infty}, \mathbf{p}^{\infty}) \rightarrow  C_{2}(\mathcal{D}, \mathbf{p})$, we prove the following.
\begin{lemma} \label{rho}
	For every path $\rho=f_{1}^{\beta_{1}} \dots f_{n}^{\beta_{n}}$ in $(\mathcal{D}, \mathbf{p})$ where every $\beta_{j}=\pm 1$, we have that
	$$\partial_{1}(\beta_{1}f_{1} +\dots + \beta_{n}f_{n})= \iota (\rho)- \tau (\rho).$$
\end{lemma}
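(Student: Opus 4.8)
The plan is to argue by induction on the length $n$ of the path $\rho$, using only the definition $\partial_1 e = \iota e - \tau e$ for a positive edge $e$, the conventions $\iota(e^{-1}) = \tau e$ and $\tau(e^{-1}) = \iota e$, and the fact that consecutive edges in a path match up, i.e. $\tau(f_i^{\beta_i}) = \iota(f_{i+1}^{\beta_{i+1}})$ for $1 \leq i \leq n-1$, together with $\iota(\rho) = \iota(f_1^{\beta_1})$ and $\tau(\rho) = \tau(f_n^{\beta_n})$.

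First I would dispose of the case of a single (possibly inverted) edge $g = f^{\beta}$, which will be the one fact invoked at each inductive step. If $\beta = +1$ then $\beta\,\partial_1 f = \iota f - \tau f = \iota(g) - \tau(g)$ directly. If $\beta = -1$ then $\beta\,\partial_1 f = -(\iota f - \tau f) = \tau f - \iota f$, and since $\iota(g) = \iota(f^{-1}) = \tau f$ and $\tau(g) = \tau(f^{-1}) = \iota f$, this again equals $\iota(g) - \tau(g)$. Thus in all cases $\beta\,\partial_1 f = \iota(g) - \tau(g)$; this also settles the base case $n = 1$.

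For the inductive step, write $\rho = \rho' g_n$ where $\rho' = f_1^{\beta_1}\cdots f_{n-1}^{\beta_{n-1}}$ is a path of length $n-1$ and $g_n = f_n^{\beta_n}$. By linearity of $\partial_1$ we have
$$\partial_1\!\left(\sum_{i=1}^{n}\beta_i f_i\right) = \partial_1\!\left(\sum_{i=1}^{n-1}\beta_i f_i\right) + \beta_n\,\partial_1 f_n = \bigl(\iota(\rho') - \tau(\rho')\bigr) + \bigl(\iota(g_n) - \tau(g_n)\bigr),$$
where the inductive hypothesis is applied to $\rho'$ and the single-edge identity to $g_n$. Since $\rho$ is a path, $\tau(\rho') = \iota(g_n)$, so the two middle terms cancel; as $\iota(\rho) = \iota(\rho')$ and $\tau(\rho) = \tau(g_n)$, this yields $\partial_1\!\left(\sum_{i=1}^{n}\beta_i f_i\right) = \iota(\rho) - \tau(\rho)$, completing the induction.

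There is essentially no obstacle here: the computation simply telescopes. The only point that demands a little care is the bookkeeping forced by the fact that $\partial_1$ is defined only on positive edges, while $f^{-1}$ denotes the formal (topological) inverse rather than a new generator, so one must consistently use $\iota(f^{-1}) = \tau f$, $\tau(f^{-1}) = \iota f$ and $\partial_1(-f) = \tau f - \iota f$ throughout.
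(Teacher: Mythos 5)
Your proof is correct and is essentially identical to the paper's: both argue by induction on $n$, first settling the single-edge case $\partial_1(\beta f)=\iota(f^{\beta})-\tau(f^{\beta})$ by examining the sign of $\beta$, then splitting off the last edge and letting the middle terms cancel via $\tau(\rho')=\iota(f_n^{\beta_n})$.
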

\begin{proof}
	The proof will be done by induction on $n$. For $n=1$, 
	$$\partial_{1}(\beta_{1}f_{1})=\beta_{1}(\iota f_{1}- \tau f_{1}),$$
	therefore, depending on the sign of $\beta_{1}$, we have that $\partial_{1}(\beta_{1}f_{1})=\iota(f_{1}^{\beta_{1}})-\tau(f_{1}^{\beta_{1}})$. For the inductive step, we write
	$$\rho=f_{1}^{\beta_{1}} \dots f_{n}^{\beta_{n}}\cdot f_{n+1}^{\beta_{n+1}}=\rho_{1} \cdot f_{n+1}^{\beta_{n+1}}.$$ 
	From the assumption for $\rho_{1}$ we have
	$$\partial_{1}(\beta_{1}f_{1} +\dots + \beta_{n}f_{n})= \iota (\rho_{1})- \tau (\rho_{1})=\iota(\rho)- \iota(f_{n+1}^{\beta_{n+1}}),$$
	and then
	\begin{align*}
		\partial_{1}(\beta_{1}f_{1} +\dots + \beta_{n}f_{n} +\beta_{n+1}f_{n+1})&= \partial_{1}(\beta_{1}f_{1} +\dots + \beta_{n}f_{n})+ \partial_{1}(\beta_{n+1}f_{n+1})\\
		&= \iota(\rho)- \iota(f_{n+1}^{\beta_{n+1}})+ (\iota(f_{n+1}^{\beta_{n+1}})-\tau(f_{n+1}^{\beta_{n+1}}))\\
		&=\iota(\rho)-\tau(f_{n+1}^{\beta_{n+1}})\\
		&=\iota(\rho)-\tau(\rho).
	\end{align*}
\end{proof}
Now we define $\hat{\rho_{2}}$ in the following way. If $z= \sum_{j} \delta_{j} f_{j} \in Z_{1}(\mathcal{D}^{\infty}, \mathbf{p}^{\infty})$ is a 1-cycle where at least one of $f_{j}$ is from $(\mathcal{D}^{\infty}, \mathbf{p}^{\infty})$ not belonging to $(\mathcal{D}, \mathbf{p})$, we have the 1-chain
$$\hat{\rho_{1}}\left(\sum_{j} \delta_{j} f_{j}\right)$$
in $C_{1}(\mathcal{D}, \mathbf{p})$. Let us show that $\hat{\rho_{1}}\left(\sum_{j} \delta_{j} f_{j}\right)$ is in fact a 1-cycle in $Z_{1}(\mathcal{D}, \mathbf{p})$. Indeed, 
\begin{align*}
\partial_{1} \hat{\rho_{1}}\left(\sum_{j} \delta_{j} f_{j}\right)&= \sum_{j} \delta_{j} \partial_{1}\hat{\rho_{1}}(f_{j})\\
&=\sum_{j}\delta_{j}(\iota f_{j}-\tau f_{j}) && \text{(by lemma \ref{rho})}\\
&=\partial^{\infty}_{1}\left(\sum_{j} \delta_{j} f_{j}\right)\\
&=\partial^{\infty}_{1}(z)\\
&=0.
\end{align*}
Since $\mathbf{p}$ is a homology trivializer, then for the 1-cycle $\hat{\rho_{1}}\left(\sum_{j} \delta_{j} f_{j}\right)$ there is a 2-chain $\varsigma_{z} \in C_{2}(\mathcal{D}, \mathbf{p})$ such that 
\begin{equation} \label{lnk1}
\tilde{\partial}_{2}(\varsigma_{z} )= \hat{\rho_{1}}\left(\sum_{j} \delta_{j} f_{j}\right).
\end{equation}
We can apply the above for every 2-cell $\sigma \in (\mathcal{D}^{\infty}, \mathbf{p}^{\infty})$ not in $(\mathcal{D}, \mathbf{p})$ by taking $z=\tilde{\partial}^{\infty}_{2}(\sigma)$ and writing $\varsigma_{\sigma}$ instead of $\varsigma_{z}$. With these notations (\ref{lnk1}) takes the form
\begin{equation} \label{lnk}
\tilde{\partial}_{2}(\varsigma_{\sigma} )=\hat{\rho_{1}}(\tilde{\partial}^{\infty}_{2}(\sigma)).
\end{equation}
We define
$$\hat{\rho_{2}}: C_{2}(\mathcal{D}^{\infty}, \mathbf{p}^{\infty}) \rightarrow C_{2}(\mathcal{D}, \mathbf{p})$$
by
$$\hat{\rho_{2}}(\sigma)=\sigma$$
for every 2-cell $\sigma$ in $(\mathcal{D}, \mathbf{p})$, and for every other 2-cell $\sigma$ we define
$$\hat{\rho_{2}}(\sigma)=\varsigma_{\sigma}.$$
We will explain how this works for 2-cells $[e,f]$ with $\hat{\rho_{1}}(e)=\sum_{i} \alpha_{i} e_{i}$ and $\hat{\rho_{1}}(f)=\sum_{j} \beta_{j} f_{j}$ where at least one of the sums has more than one term (the corresponding edge is not in $(\mathcal{D}, \mathbf{p})$). In this case we have
\begin{align*}
\hat{\rho}_{1}\tilde{\partial}^{\infty}_{2}([e,f])&= \hat{\rho}_{1}(e.(\iota f- \tau f)-(\iota e-\tau e).f)\\
&=\sum_{i}\alpha_{i}e_{i}\cdot \sum_{j}\beta_{j}(\iota f_{j}-\tau f_{j})-\sum_{i}\alpha_{i}(\iota e_{i}-\tau e_{i})\cdot \sum_{j} \beta_{j} f_{j} && \text{(lemma \ref{rho})}\\
&= \sum_{i,j} \alpha_{i} \beta_{j} \tilde{\partial}_{2}[e_{i},f_{j}]\\
&= \tilde{\partial}_{2}\left(\sum_{i,j} \alpha_{i} \beta_{j} [e_{i},f_{j}]\right),
\end{align*}
therefore the 2-chain $\varsigma_{[e,f]}$ in this case can be chosen to be $\sum_{i,j} \alpha_{i} \beta_{j} [e_{i},f_{j}]$. Again $\hat{\rho_{2}}$ as defined above is a retraction.

Finally, we define
$$\hat{\rho_{3}}: C_{3}(\mathcal{D}^{\infty}, \mathbf{p}^{\infty}) \rightarrow C_{3}(\mathcal{D}, \mathbf{p})$$
as follows. If $e$ is any edge with $\hat{\rho_{1}}(e)=\sum_{i}\varepsilon_{i}e_{i}$ and $\sigma$ a 2-cell in $(\mathcal{D}^{\infty}, \mathbf{p}^{\infty})$ such that $\hat{\rho_{2}}(\sigma)=\sum_{j} \mu_{j} \sigma_{j}$, then we define
$$\hat{\rho_{3}}([e,\sigma])=\sum_{i,j} \varepsilon_{i} \mu_{j} [e_{i}, \sigma_{j}] \text{ and } \hat{\rho_{3}}([\sigma,e])=\sum_{i,j} \varepsilon_{i} \mu_{j} [\sigma_{j},e_{i}].$$
It is obvious that $\hat{\rho_{3}}$ is a retraction.

\begin{lemma} \label{chm}
The following hold true:
\begin{itemize}
	\item [(i)] $\hat{\rho_{1}}\partial^{\infty}_{2}=\tilde{\partial}_{2}\hat{\rho_{2}}$.
	\item[(ii)]$\hat{\rho_{2}}\partial^{\infty}_{3}=\tilde{\partial}_{3}\hat{\rho_{3}}$.
\end{itemize}
\end{lemma}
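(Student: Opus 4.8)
The plan is to verify both identities on generators, using that every map involved is $\mathbb{Z}$-linear, so it is enough to check (i) on the $2$-cells of $C_2(\mathcal{D}^\infty,\mathbf{p}^\infty)$ and (ii) on the $3$-cells $[f,\sigma]$ and $[\sigma,f]$. Before starting I would pin down the auxiliary choices. Since the part of $(\mathcal{D}^\infty,\mathbf{p}^\infty)$ lying outside $(\mathcal{D},\mathbf{p})$ is invariant under, and acted on freely by, the two-sided $F$-action (translating an edge or a square cell leaves its rule, resp.\ its pair of rules, unchanged, and $u\cdot e\cdot v=e$ forces $u=v=1$), I can choose $\rho$ (hence $\hat\rho_1$) equivariantly on orbit representatives and extend; then I can choose each $\varsigma_\sigma$ equivariantly as well, because the identity (\ref{lnk}) that $\varsigma_\sigma$ must satisfy is itself equivariant once $\hat\rho_1$ is. In addition, for a square $2$-cell $[e,g]\notin(\mathcal{D},\mathbf{p})$ I would take the admissible $2$-chain exhibited in the text, $\varsigma_{[e,g]}=\sum_{p,q}\alpha_p\beta_q[e_p,g_q]$ with $\hat\rho_1(e)=\sum_p\alpha_p e_p$, $\hat\rho_1(g)=\sum_q\beta_q g_q$; equivalently, $\hat\rho_2([e,g])$ is the value at $(\hat\rho_1(e),\hat\rho_1(g))$ of the bilinear square-cell pairing $C_1\times C_1\to C_2$, $(e,g)\mapsto[e,g]$ (and this also holds trivially when $[e,g]\in(\mathcal{D},\mathbf{p})$). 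With $\hat\rho_1,\hat\rho_2$ normalised this way, $\hat\rho_3$ is determined and inherits equivariance.

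For (i): if $\sigma\in(\mathcal{D},\mathbf{p})$ then $\hat\rho_2(\sigma)=\sigma$, the map $\partial^\infty_2$ agrees with $\tilde\partial_2$ on $\sigma$, and every edge occurring in $\tilde\partial_2\sigma$ is a translate of an edge of $\mathcal{D}$, hence is fixed by $\hat\rho_1$; so both sides equal $\tilde\partial_2\sigma$. If $\sigma\notin(\mathcal{D},\mathbf{p})$ then $\hat\rho_2(\sigma)=\varsigma_\sigma$, and the defining relation (\ref{lnk}) of $\varsigma_\sigma$ reads exactly $\tilde\partial_2\hat\rho_2(\sigma)=\hat\rho_1(\partial^\infty_2\sigma)$. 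This settles (i); note that no equivariance is needed here, only (\ref{lnk}) itself.

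For (ii) it suffices to treat a $3$-cell $[f,\sigma]$, the cell $[\sigma,f]$ being handled identically with (\ref{d3b}) in place of (\ref{d3a}). Write $\hat\rho_1(f)=\sum_i\alpha_i f_i$, $\hat\rho_2(\sigma)=\sum_j\mu_j\sigma_j$, $\partial^\infty_2\sigma=\sum_k\varepsilon_k e_k$, and $\tilde\partial_2\sigma_j=\sum_k\varepsilon^{(j)}_k e^{(j)}_k$. Expanding $\tilde\partial_3\hat\rho_3[f,\sigma]$ by the definition of $\hat\rho_3$ and (\ref{d3a}) gives $\sum_{i,j}\alpha_i\mu_j(\iota f_i-\tau f_i).\sigma_j+\sum_{i,j,k}\alpha_i\mu_j\varepsilon^{(j)}_k[f_i,e^{(j)}_k]$. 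In the first summand Lemma \ref{rho} yields $\sum_i\alpha_i(\iota f_i-\tau f_i)=\iota f-\tau f$, so by bilinearity of the $F$-action on $C_2$ it equals $(\iota f-\tau f).\hat\rho_2(\sigma)$; in the second, bilinearity of the square-cell pairing together with $\sum_{j,k}\mu_j\varepsilon^{(j)}_k e^{(j)}_k=\tilde\partial_2\hat\rho_2(\sigma)=\hat\rho_1(\partial^\infty_2\sigma)$ (which is (\ref{lnk}), i.e.\ part (i)) rewrites it as $\sum_i\alpha_i[f_i,\hat\rho_1(\partial^\infty_2\sigma)]$. On the other side, $\hat\rho_2(\partial^\infty_3[f,\sigma])=\hat\rho_2((\iota f-\tau f).\sigma)+\sum_k\varepsilon_k\hat\rho_2([f,e_k])$; equivariance of $\hat\rho_2$ makes the first term $(\iota f-\tau f).\hat\rho_2(\sigma)$, while the square-cell normalisation gives $\hat\rho_2([f,e_k])=\sum_i\alpha_i[f_i,\hat\rho_1(e_k)]$, so the second term becomes $\sum_i\alpha_i[f_i,\hat\rho_1(\sum_k\varepsilon_k e_k)]=\sum_i\alpha_i[f_i,\hat\rho_1(\partial^\infty_2\sigma)]$. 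The two expressions coincide, which proves (ii).

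The computations themselves are mechanical once this framework is in place; the point that genuinely needs care — and what I expect to be the main obstacle — is the bookkeeping around the choices: one must check that $\rho$ and, especially, the auxiliary $2$-chains $\varsigma_\sigma$ can be fixed simultaneously so that $\hat\rho_1$ and $\hat\rho_2$ are two-sided $F$-equivariant and so that $\hat\rho_2$ of a square cell is the bilinear pairing of the $\hat\rho_1$-images of its two edges. Granting these normalisations, (i) is immediate from (\ref{lnk}), and (ii) collapses to Lemma \ref{rho} plus bilinearity of the two pairings $\mathbb{Z}F\times C_2\to C_2$ (translation) and $C_1\times C_1\to C_2$ (the square-cell pairing).
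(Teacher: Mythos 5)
Your proposal is correct and follows the same route as the paper: both verify the identities on generators, split into the cases $\sigma\in(\mathcal{D},\mathbf{p})$ versus $\sigma\notin(\mathcal{D},\mathbf{p})$, reduce (i) to the defining relation (\ref{lnk}), and prove (ii) by expanding both sides via (\ref{d3a}) and matching them using Lemma \ref{rho} and the explicit form of $\hat{\rho_{2}}$ on square cells. Where you go beyond the paper is in the bookkeeping, and these refinements are worth keeping: the paper silently uses $\hat{\rho_{2}}((\iota e-\tau e).\sigma)=\sum_{j}\mu_{j}(\iota e-\tau e).\sigma_{j}$, i.e.\ two-sided $F$-equivariance of $\hat{\rho_{2}}$, whereas you justify it by choosing $\rho$ and the chains $\varsigma_{\sigma}$ on orbit representatives of the free $F$-action and extending equivariantly; and in (ii) you replace the paper's term-matching step (the claim that each $\delta_{k}\beta_{ks}g_{ks}$ equals some individual term $\mu_{j}\alpha_{j}x_{j}$ of some $\tilde{\partial}_{2}(\sigma_{j})$, which is delicate if terms combine or cancel across different $j$) by the cleaner observation that the whole sum $\sum_{j,k}\mu_{j}\varepsilon^{(j)}_{k}e^{(j)}_{k}$ equals $\hat{\rho_{1}}(\tilde{\partial}^{\infty}_{2}\sigma)$ by part (i), after which bilinearity of the pairing $(e,f)\mapsto[e,f]$ finishes the computation. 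The only point to spell out fully in a final write-up is the one you already flag: that the square-cell normalisation $\varsigma_{[e,g]}=\sum_{p,q}\alpha_{p}\beta_{q}[e_{p},g_{q}]$ and the equivariant choices of $\varsigma_{\sigma}$ for the remaining new $2$-cells can be made simultaneously, which follows from the freeness of the two-sided action and the equivariance of (\ref{lnk}) exactly as you indicate.
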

\begin{proof}
(i) If $\sigma \in  C_{2}(\mathcal{D}, \mathbf{p})$, then
\begin{align*}
\hat{\rho_{1}}\tilde{\partial}^{\infty}_{2}(\sigma)&=\hat{\rho_{1}}\tilde{\partial}_{2}(\sigma)=\tilde{\partial}_{2}(\sigma)=\tilde{\partial}_{2}\hat{\rho_{2}}(\sigma).
\end{align*}
Assume now that $\sigma$ is a 2-cell not in $C_{2}(\mathcal{D}, \mathbf{p})$, then from the definition of $\hat{\rho_{2}}$ and from (\ref{lnk}) we have
\begin{align*}
\tilde{\partial}_{2}\hat{\rho_{2}}(\sigma)=\tilde{\partial}_{2}(\varsigma_{\sigma})=\hat{\rho_{1}} (\tilde{\partial}^{\infty}_{2}(\sigma)).
\end{align*}
(ii) Let $[e,\sigma]$ be a 3-cell where $\sigma$ is any 2-cell in $(\mathcal{D}^{\infty}, \mathbf{p}^{\infty})$ with $\hat{\rho_{2}}(\sigma)=\sum_{j} \mu_{j} \sigma_{j}$, and $e$ is an edge with $\hat{\rho_{1}}(e)=\sum_{i}\varepsilon_{i}e_{i}$. Assume also that $\tilde{\partial}^{\infty}_{2}(\sigma)= \sum_{k} \delta_{k} f_{k}$ where for each $k$, $\hat{\rho_{1}}(f_{k})= \sum_{s}\beta_{ks}g_{ks}$. It follows that from (i) that
\begin{equation} \label{brz}
\tilde{\partial}_{2}\left(\sum_{j} \mu_{j} \sigma_{j}\right)=\tilde{\partial}_{2}(\hat{\rho_{2}}(\sigma))	=\hat{\rho_{1}}\tilde{\partial}^{\infty}_{2}(\sigma)=\sum_{k}\delta_{k}\sum_{s}\beta_{ks} g_{ks}.
\end{equation}
If for each $j$ we let $\mathcal{T}_{\sigma_{j}}$ be the set of terms of $\tilde{\partial}_{2}(\sigma_{j})$, we see from (\ref{brz}) that for each $k$ and each $s$, there is some $j$ and some $\alpha_{j}x_{j} \in \mathcal{T}_{\sigma_{j}}$,  such that $\mu_{j}\alpha_{j}x_{j}=\delta_{k}\beta_{ks}g_{ks}$. This implies that for each edge $e_{i}$, we have that $\delta_{k}\beta_{ks}[e_{i},g_{ks}]=\mu_{j} \alpha_{j}[e_{i},x_{j}]$. Further we see that
\begin{align*}
	\hat{\rho_{2}}\tilde{\partial}^{\infty}_{3}([e,\sigma])&=\hat{\rho_{2}}\left((\iota e- \tau e).\sigma+ \sum_{k} \delta_{k}[e,f_{k}]\right)\\
	&=\sum_{j}\mu_{j}(\iota e - \tau e).\sigma_{j}+\sum_{k} \delta_{k}\left(\sum_{i}\varepsilon_{i}\sum_{s} \beta_{ks}[e_{i},g_{ks}]\right)\\
	&=\sum_{j}\mu_{j}\left(\sum_{i}\varepsilon_{i}(\iota e_{i} - \tau e_{i})\right).\sigma_{j}+\sum_{k} \delta_{k}\left(\sum_{i}\varepsilon_{i}\sum_{s} \beta_{ks}[e_{i},g_{ks}]\right) && \text{(lemma \ref{rho})}\\
	&=\sum_{i}\varepsilon_{i}\left(\sum_{j}\mu_{j}(\iota e_{i} - \tau e_{i})\right).\sigma_{j}+\sum_{i} \varepsilon_{i}\left(\sum_{k}\delta_{k}\sum_{s} \beta_{ks}[e_{i},g_{ks}]\right)\\
	&=\sum_{i}\varepsilon_{i} \left(\sum_{j}\mu_{j}(\iota e_{i} - \tau e_{i}).\sigma_{j}+ \sum_{k}\delta_{k}\sum_{s} \beta_{ks}[e_{i},g_{ks}]\right)\\
	&=\sum_{i}\varepsilon_{i} \left(\sum_{j}\mu_{j}(\iota e_{i} - \tau e_{i}).\sigma_{j}+ \sum_{j} \mu_{j}\sum_{\alpha_{j} x_{j} \in \mathcal{T}_{\sigma_{j}}}\alpha_{j}[e_{i},x_{j}]\right)\\
	&=\sum_{i,j}\varepsilon_{i} \mu_{j}\left((\iota e_{i} - \tau e_{i}).\sigma_{j}+\sum_{\alpha_{j} x_{j} \in \mathcal{T}_{\sigma_{j}}}\alpha_{j}[e_{i},x_{j}]\right)=\tilde{\partial}_{3}\left(\sum_{i,j}\varepsilon_{i} \mu_{j}[e_{i}, \sigma_{j}]\right) && \text{ (by (\ref{d3a}))}\\
	&=\tilde{\partial}_{3}\hat{\rho_{3}}([e,\sigma]).
\end{align*}
The proof for the 3-cell $[\sigma, e]$ is similar to the above and is omitted here. 
\end{proof}
\begin{proposition} \label{p14} (Proposition 14, \cite{SES})
If $\mathbf{p}$ is a homology trivializer obtained by choosing resolutions of certain critical pairs, then the sequence (\ref{cxkp}) is exact.
\end{proposition}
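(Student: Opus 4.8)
The plan is to deduce the exactness of (\ref{cxkp}) from the already established exactness of its analogue for the complete presentation $\mathcal{M}^{\infty}$, transporting cycles up to the larger complex and boundaries back down along the chain map $\{\hat{\rho}_i\}$ built above. The inclusion $B_{2}(\mathcal{D},\mathbf{p}) \hookrightarrow K^{\mathbf{p}}$ is trivially injective, and the fact that $\tilde{\partial}_{2}$ sends $K^{\mathbf{p}}$ onto $B_{1}(\mathcal{D})$ was recorded when (\ref{cxkp}) was introduced; since $\tilde{\partial}_{2}\tilde{\partial}_{3}=0$ we also have $B_{2}(\mathcal{D},\mathbf{p}) \subseteq \ker \tilde{\partial}_{2}$. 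Hence the entire content of the proposition reduces to the reverse containment: if $\xi \in K^{\mathbf{p}}$ and $\tilde{\partial}_{2}(\xi)=0$, then $\xi = \tilde{\partial}_{3}(\eta)$ for some $\eta \in C_{3}(\mathcal{D},\mathbf{p})$.

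First I would move $\xi$ into the bigger complex. Because the Knuth--Bendix passage replaces $\mathcal{P}$ by an equivalent presentation, the monoid $S$, the ring $\mathbb{Z}S$, and therefore the two-sided ideal $J$ of $\mathbb{Z}F$ are the same for $\mathcal{P}$ and $\mathcal{P}^{\infty}$; together with $\mathbf{p}\subseteq\mathbf{p}^{\infty}$ and $C_{2}(\mathcal{D})\subseteq C_{2}(\mathcal{D}^{\infty})$, the structural identity $K^{\mathbf{p}}=C_{2}+J\hat{\mathbf{p}}\mathbb{Z}F+\mathbb{Z}F\hat{\mathbf{p}}J$ and the corresponding one for $K^{\mathbf{p}^{\infty}}$ give $K^{\mathbf{p}}\subseteq K^{\mathbf{p}^{\infty}}$. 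From the same identity, $K^{\mathbf{p}}\subseteq C_{2}\oplus C_{2}^{\mathbf{p}}=C_{2}(\mathcal{D},\mathbf{p})$, so in particular $\xi$ is a chain of $2$-cells of $(\mathcal{D},\mathbf{p})$; and since $(\mathcal{D},\mathbf{p})$ is a subcomplex of $(\mathcal{D}^{\infty},\mathbf{p}^{\infty})$, the map $\tilde{\partial}^{\infty}_{2}$ restricts to $\tilde{\partial}_{2}$ there, whence $\tilde{\partial}^{\infty}_{2}(\xi)=0$. Thus $\xi$ is a $2$-cycle of $(\mathcal{D}^{\infty},\mathbf{p}^{\infty})$ lying in $K^{\mathbf{p}^{\infty}}$, and the first-stage exactness of $0\to B_{2}(\mathcal{D}^{\infty},\mathbf{p}^{\infty})\to K^{\mathbf{p}^{\infty}}\xrightarrow{\tilde{\partial}^{\infty}_{2}}B_{1}(\mathcal{D}^{\infty})\to 0$ supplies $\eta\in C_{3}(\mathcal{D}^{\infty},\mathbf{p}^{\infty})$ with $\tilde{\partial}^{\infty}_{3}(\eta)=\xi$.

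Then I would push $\eta$ back down with $\hat{\rho}_{3}$. Applying $\hat{\rho}_{2}$ to $\xi=\tilde{\partial}^{\infty}_{3}(\eta)$ and invoking Lemma \ref{chm}(ii) gives $\hat{\rho}_{2}(\xi)=\tilde{\partial}_{3}(\hat{\rho}_{3}(\eta))$; but $\xi$ is a combination of $2$-cells already in $(\mathcal{D},\mathbf{p})$, so $\hat{\rho}_{2}(\xi)=\xi$ by the retraction property. Hence $\xi=\tilde{\partial}_{3}(\hat{\rho}_{3}(\eta))$ with $\hat{\rho}_{3}(\eta)\in C_{3}(\mathcal{D},\mathbf{p})$, which shows $\xi\in B_{2}(\mathcal{D},\mathbf{p})$ and completes the proof. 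The only delicate points are that $\{\hat{\rho}_{i}\}$ is a bona fide chain map (Lemma \ref{chm}) and that each $\hat{\rho}_{i}$ restricts to the identity on the subcomplex $(\mathcal{D},\mathbf{p})$ — precisely what makes $\hat{\rho}_{2}(\xi)=\xi$ legitimate — and both have been arranged above. Beyond that the argument is purely formal: in contrast to \cite{SES}, no Noetherian induction over $2$-cells is needed at this last step, that work having been absorbed into the complete case of the first stage and into the Knuth--Bendix passage.
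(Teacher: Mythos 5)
Your proposal is correct and follows essentially the same route as the paper: embed the cycle $\xi \in K^{\mathbf{p}} \cap \ker\tilde{\partial}_{2}$ into $K^{\mathbf{p}^{\infty}}$, use the first-stage exactness for the complete system to write $\xi = \tilde{\partial}^{\infty}_{3}(w)$, and then apply Lemma \ref{chm}(ii) together with the fact that $\hat{\rho}_{2}$ restricts to the identity on $C_{2}(\mathcal{D},\mathbf{p})$ to conclude $\xi = \tilde{\partial}_{3}\hat{\rho}_{3}(w) \in B_{2}(\mathcal{D},\mathbf{p})$. The extra justifications you supply (that $J$ is unchanged under the Knuth--Bendix passage, hence $K^{\mathbf{p}} \subseteq K^{\mathbf{p}^{\infty}}$) are points the paper asserts without elaboration, so your write-up is if anything slightly more complete.
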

\begin{proof}
From the special case of proposition we have the short exact sequence
\begin{equation} \label{cxkpinft}
	\xymatrix{0 \ar[r] & B_{2}(\mathcal{D}^{\infty},\mathbf{p}^{\infty}) \ar[r]^-{incl.} & K^{\mathbf{p}^{\infty}} \ar[r]^-{\tilde{\partial}^{\infty}_{2}} & B_{1}(\mathcal{D}^{\infty})  \ar[r] & 0}
\end{equation}	
Let $\xi \in Ker \tilde{\partial}_{2}$. Since $K^{\mathbf{p}} \subseteq K^{\mathbf{p}^{\infty}}$ and the restriction of $\tilde{\partial}^{\infty}_{2}$ on $K^{\mathbf{p}}$ is $\tilde{\partial}_{2}$, then $\xi \in Ker \tilde{\partial}^{\infty}_{2}$ and the exactness of (\ref{cxkpinft}) implies the existence of a 3-chain $w \in C_{3}(\mathcal{D}^{\infty},\mathbf{p}^{\infty})$ such that $\tilde{\partial}^{\infty}_{3}(w)=\xi$. It follows form lemma \ref{chm} that
$$\xi= \hat{\rho_{2}}(\xi)=\hat{\rho_{2}}(\tilde{\partial}^{\infty}_{3}(w))=\tilde{\partial}_{3}\hat{\rho_{3}}(w),$$
which shows that $\xi \in B_{2}(\mathcal{D}, \mathbf{p})$ and hence the exactness of (\ref{cxkp}).
\end{proof}

\subsection{The Pride complex $\mathcal{D}(\mathcal{P})^{\ast}$ associated with a group presentation $\cP$} \label{pc}

In this section we will explain several results of Pride in \cite{LDHM2} where it is proved that the homotopical property FDT for groups is equivalent to the homological property $FP_{3}$. In order to achieve this, associated to any group presentation $\hat{\mathcal{P}}=(\mathbf{x},\mathbf{r})$, Pride considers two crossed modules. The first one is the free crossed module $(\Sigma, \hat{F}, \partial)$ associated to $\hat{\mathcal{P}}=(\mathbf{x},\mathbf{r})$. To define the second, he constructs first a complex $\mathcal{D}(\mathcal{M})^{\ast}$ arising from the monoid presentation
$$\mathcal{M}=\langle\mathbf{x}, \mathbf{x}^{-1}: R=1 (R \in \mathbf{r}), x^{\varepsilon}x^{-\varepsilon}=1(x \in \mathbf{x}, \varepsilon=\pm1) \rangle$$ 
of the same group given by $\hat{\mathcal{P}}=(\mathbf{x},\mathbf{r})$. In other words, the group is now realized as the quotient of the free monoid $F$ on $\mathbf{x} \cup \mathbf{x}^{-1}$ by the smallest congruence generated by the relations giving $\mathcal{M}$. We will give below some necessary details on this complex. We first mention that $\mathcal{D}(\mathcal{M})^{\ast}$ is an extension of the usual Squier complex $\mathcal{D}(\mathcal{M})$ arising from $\mathcal{M}$. This complex is called in \cite{Gilbert} the Pride complex. We emphasize here that the definition of $\mathcal{D}(\mathcal{M})$ in \cite{LDHM2} requires the attachment of 2-cells $[e^{\varepsilon}, f^{\delta}]$ where $e,f$ are positive edges and $\varepsilon, \delta=\pm 1$. But as it is observed in the latter paper \cite{SES} (see Remark 8 there), since we are interested in the homotopy and homology of the complex, the attachment of 2-cells $[e^{\varepsilon}, f^{\delta}]$ is unnecessary in the presence of $[e,f]$ because the boundary of $[e^{\varepsilon}, f^{\delta}]$ is a cyclic permutation of $(\partial[e,f])^{\pm 1}$, hence it is null homotopic. So we assume in what follows that $\mathcal{D}(\mathcal{M})$ is that one described in section \ref{sqc}.

To complete the construction of $\mathcal{D}(\mathcal{M})^{\ast}$ we need to add to $\mathcal{D}(\mathcal{M})$ certain extra 2-cells along the closed paths 
$$\mathbf{t}=(1,x^{\varepsilon}x^{-\varepsilon}, 1, x^{\varepsilon}) \circ (x^{\varepsilon}, x^{-\varepsilon}x^{\varepsilon}, 1, 1)^{-1} ,$$
where $x \in \mathbf{x}$ and $\varepsilon=\pm1$. The attaching of these two cells is done for every overlap of two trivial edges as depicted below
$$\xymatrix{x^{\varepsilon}x^{-\varepsilon}x^{\varepsilon} \ar@/^/[d]^{(x^{\varepsilon}, x^{-\varepsilon}x^{\varepsilon}, 1, 1)}\ar@/_/[d]_{(1,x^{\varepsilon}x^{-\varepsilon}, 1, x^{\varepsilon})} & \\x^{\varepsilon}}$$
The attached 2-cell has boundary made of the following edges
$$\mathbb{A}=(1,x^{\varepsilon}x^{-\varepsilon}, 1, x^{\varepsilon}) \text{ and } \mathbb{B}=(x^{\varepsilon}, x^{-\varepsilon}x^{\varepsilon}, 1, 1).$$
Together with such 2-cells, there are added to the complex all their ''translates'' 
$$\xymatrix{ux^{\varepsilon}x^{-\varepsilon}x^{\varepsilon}v \ar@/^/[d]^{(ux^{\varepsilon}, x^{-\varepsilon}x^{\varepsilon}, 1, v)}\ar@/_/[d]_{(u,x^{\varepsilon}x^{-\varepsilon}, 1, x^{\varepsilon}v)} & \\ux^{\varepsilon}v}$$
In our paper the Pride complex $\mathcal{D}(\mathcal{M})^{\ast}$ is denoted throughout by $(\mathcal{D}, \mathbf{t})$.

If $P$ is a path in $(\mathcal{D}, \mathbf{t})$ with $\iota(P)=W$ and $\tau(P)=Z$, there are defined in \cite{LDHM2}, $T_{W}$ and $T_{Z}$ to be arbitrary trivial paths from $W^{\ast}$ to $W$ and from $Z^{\ast}$ to $Z$ where $W^{\ast}$ and $Z^{\ast}$ are the unique reduced words freely equivalent to $W$ and $Z$ respectively. Then the path $T_{W} P T_{Z}^{-1}$ is denoted by $P^{\ast}$. The notation is not ambiguous since any two parallel trivial paths in $(\mathcal{D}, \mathbf{t})$ are homotopic. 

Pride has defined in \cite{LDHM2} an $\hat{F}$-crossed module $\Sigma^{\ast}$ out of $(\mathcal{D}, \mathbf{t})$ in the following way. The elements of $\Sigma^{\ast}$ are the homotopy classes $\langle P \rangle$ where $P$ is a path in the 1-skeleton of $(\mathcal{D}, \mathbf{t})$ such that $\tau(P)$ is the empty word and $\iota(P)$ is a freely reduced word from $\hat{F}$. He then defines a (non commutative) operation $+$ on $\Sigma^{\ast}$ by
$$\langle P_{1} \rangle + \langle P_{2} \rangle= \langle (P_{1}+P_{2})^{\ast} \rangle$$
and an action of the free group $\hat{F}$ on $\mathbf{x} \cup \mathbf{x}^{-1}$ on $\Sigma^{\ast}$ by
$$^{[W]} \langle P \rangle= \langle (WPW^{-1})^{\ast} \rangle.$$
Also he defines 
$$\partial^{\ast}: \Sigma^{\ast} \rightarrow \hat{F}$$
by
$$\partial^{\ast}(\langle P \rangle)= [\iota(P)].$$
It is proved in \cite{LDHM2} that the triple $(\Sigma^{\ast}, \hat{F}, \partial^{\ast})$ is a crossed module. Further, using the fact that $\Sigma$ is the free crossed module over $\mathbf{r}$, it is proved that 
$$\eta: \Sigma \rightarrow \Sigma^{\ast}$$
defined by $r \mapsto \langle (1, r,1,1)\rangle $ is an isomorphism of crossed modules. The inverse $\psi: \Sigma^{\ast} \rightarrow \Sigma$ of $\eta$ is the map defined in the following way. It is first defined a map $\psi_{0}$ from the set of edges of $(\mathcal{D}, \mathbf{t})$ to $\Sigma$ as follows. Every trivial edge is mapped to $0$, and every edge $(u, r, \varepsilon, v)$ is mapped to $(^{[u]}r)^{\varepsilon}$ where $[u]$ is the element of $\hat{F}$ represented by $u$. It is proved that this map extends to paths of $(\mathcal{D}, \mathbf{t})$ and it sends the boundaries of the defining 2-cells of $(\mathcal{D}, \mathbf{t})$ to 0. We thus have a morphism $\psi: \Sigma^{\ast} \rightarrow \Sigma$ given by
$$\langle P\rangle \mapsto \psi_{0}(P)$$
which is proved to be the inverse of $\eta$. By restriction it is obtained an isomorphism between $\text{Ker}\partial$ and $\text{Ker}\partial^{\ast}$. But $\text{Ker}\partial$ is itself isomorphic to $\pi_{2}(\hat{\mathcal{P}})$, the second homotopy module of the standard complex associated with $\hat{\mathcal{P}}$, and $\text{Ker}\partial^{\ast}$ on the other hand, is isomorphic to $\pi_{1}(\mathcal{D}, \mathbf{t}, 1)$, the first homotopy group of the connected component of $(\mathcal{D}, \mathbf{t})$ at 1. Recollecting, we have the following isomorphisms
\begin{equation} \label{ki}
	\xymatrix{\pi_{2}(\hat{\mathcal{P}}) =\text{Ker}\partial \ar@<2pt>[r]^-(0.40){\eta} &  \text{Ker}\partial^{\ast} \ar@<2pt>[l]^-(0.60){\psi}   {=\pi_{1}(\mathcal{D}, \mathbf{t}, 1)}.}
\end{equation}
The fundamental group $\pi_{1}(\mathcal{D}, \mathbf{t},1)$ is abelian being isomorphic to $\pi_{2}(\hat{\mathcal{P}})$ and therefore isomorphic to its abelianization $H_{1}(\mathcal{D}, \mathbf{t},1)$. The role of the isomorphism between the two groups will be played by the well known Hurewicz homomorphism $h: \pi_{1}(\mathcal{D}, \mathbf{t}, 1) \rightarrow  H_{1}(\mathcal{D}, \mathbf{t},1)$ which sends the homotopy class of a loop to the homology class of the corresponding 1-cycle. In our proofs in the following sections, we will identify the homotopy class of any loop $\xi$ with $h(\xi)$ without further comment.

\subsection{A characterization of the asphericity in terms of the Pride complex}

Assume now we are given a presentation $\mathcal{P}=(\mathbf{x},\mathbf{r})$ of a group $G$. The new presentation $\hat{\mathcal{P}}=(\mathbf{x},\mathbf{r} \cup \mathbf{r}^{-1})$ where $\mathbf{r}^{-1}=\{r^{-1}| r \in \mathbf{r}\}$, is still giving $G$. The free crossed module $(\Sigma, \hat{F}, \partial)$ of \cite{LDHM2} arising from $\hat{\mathcal{P}}$ is in fact isomorphic to our crossed module $(\mathcal{G}(\Upsilon), \hat{F}, \tilde{\theta})$. Indeed, there is a morphism of crossed modules $\alpha: \Sigma \rightarrow \mathcal{G}(\Upsilon)$ induced by the map $r^{\varepsilon} \mapsto \mu \sigma (r^{\varepsilon})$, whose inverse is $\beta: \mathcal{G}(\Upsilon) \rightarrow \Sigma$ defined by $\mu \sigma ((^{u}r)^{\varepsilon}) \mapsto {^{u}}(r^{\varepsilon})$. So there is no loss of generality if we identify ${^{u}}(r^{\varepsilon}) \in \Sigma$ with $\mu \sigma ((^{u}r)^{\varepsilon}) \in \mathcal{G}(\Upsilon)$. The isomorphism $\Sigma \cong \mathcal{G}(\Upsilon)$ means in particular that $\text{Ker}\partial \cong \text{Ker}\tilde{\theta}=\tilde{\Pi}$. 

We have on the other hand the monoid presentation of $G$
$$\mathcal{M}=\langle\mathbf{x}, \mathbf{x}^{-1}: \mathbf{s} \rangle$$ 
where 
$$\mathbf{s}=\{(r^{\varepsilon},1): r \in \mathbf{r}, \varepsilon=\pm 1\} \cup \left\{ (x^{\varepsilon}x^{-\varepsilon},1): x \in \mathbf{x}, \varepsilon=\pm 1\right\}.$$
Related to $\mathcal{M}$ we have the Pride complex $(\mathcal{D}, \mathbf{t})$. Being aspherical for $\cP$ means in virtue of theorem \ref{wdom} and of isomorphisms in (\ref{ki}) that $H_{1}(\mathcal{D}, \mathbf{t},1)$ is trivialized as an abelian group by the homology classes of all 1-cycles corresponding to $\eta(\hat{\mathfrak{U}})$. This section is devoted to proving that the asphericity of $\cP$ is equivalent to $H_{1}(\mathcal{D}, \mathbf{p})=0$ where $\mathbf{p}=\mathbf{q} \cup \mathbf{t}$ and $\mathbf{q}$ is the set of 1-cycles corresponding to $\eta(\mu \sigma (rr^{-1}))$ with $r \in \mathbf{r}$.

For every two paths of positive length $A=e^{\varepsilon_{1}}_{1} \circ \dots \circ e^{\varepsilon_{n}}_{n}$ and $B=f^{\delta_{1}}_{1} \circ \dots \circ f^{\delta_{m}}_{m}$ in $(\mathcal{D}, \mathbf{t})$ we have two parallel paths:
$$A. \iota B \circ \tau A. B \text{ and } \iota A. B \circ A. \tau B.$$
In what follows we use the notation $C \sim D$ to mean that two parallel paths $C$ and $D$ are homotopic to each other. 
\begin{lemma} \label{[A,B]}
	For every two paths $A=e^{\varepsilon_{1}}_{1} \circ \dots \circ e^{\varepsilon_{n}}_{n}$ and $B=f^{\delta_{1}}_{1} \circ \dots \circ f^{\delta_{m}}_{m}$ as above, $(A. \iota B \circ \tau A. B) \sim (\iota A. B \circ A. \tau B)$.
\end{lemma}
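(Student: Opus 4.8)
The plan is to prove the statement by induction on $n+m$, the total length of the two paths $A$ and $B$, with the base case $n=m=1$ handled by the structure of the Squier complex $(\mathcal{D},\mathbf{t})$. For $n=m=1$, $A=e_1^{\varepsilon_1}$ and $B=f_1^{\delta_1}$ are single (oriented) edges, so both parallel paths are among the four sides of a square $2$-cell $[e_1,f_1]$ attached to $\mathcal{D}(\mathcal M)$. Its boundary $\partial[e_1,f_1]=(e_1.\iota f_1)(\tau e_1.f_1)(e_1.\tau f_1)^{-1}(\iota e_1.f_1)^{-1}$ is null-homotopic, which says precisely that $e_1.\iota f_1 \circ \tau e_1.f_1 \sim \iota e_1.f_1 \circ e_1.\tau f_1$. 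When $\varepsilon_1$ or $\delta_1$ is $-1$ we use that the boundary of $[e_1^{\varepsilon_1},f_1^{\delta_1}]$ is a cyclic permutation of $(\partial[e_1,f_1])^{\pm1}$, as recalled in Section \ref{pc}, so it is still null-homotopic; hence the two parallel paths are homotopic for all sign choices.

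For the inductive step I would first peel one edge off $A$. Write $A = e_1^{\varepsilon_1}\circ A'$ where $A' = e_2^{\varepsilon_2}\circ\dots\circ e_n^{\varepsilon_n}$, so $\iota A = \iota(e_1^{\varepsilon_1})$, $\tau A = \tau A'$, and $\tau(e_1^{\varepsilon_1}) = \iota A'$. Using the two-sided action of $F$ on $(\mathcal D,\mathbf t)$ one checks the path identity
$$
A.\iota B \circ \tau A.B \;=\; (e_1^{\varepsilon_1}.\iota B)\circ(A'.\iota B)\circ(\tau A'.B),
$$
and similarly
$$
\iota A.B \circ A.\tau B \;=\; (\iota(e_1^{\varepsilon_1}).B)\circ(e_1^{\varepsilon_1}.\tau B)\circ(A'.\tau B).
$$
By the induction hypothesis applied to the single edge $e_1^{\varepsilon_1}$ and the path $B$, we have $e_1^{\varepsilon_1}.\iota B \circ \tau(e_1^{\varepsilon_1}).B \sim \iota(e_1^{\varepsilon_1}).B \circ e_1^{\varepsilon_1}.\tau B$; translating this homotopy on the right where necessary and inserting it into the first expression lets us rewrite $A.\iota B\circ\tau A.B$ as $(\iota(e_1^{\varepsilon_1}).B)\circ(e_1^{\varepsilon_1}.\tau B)\circ(A'.\iota B)\circ(\dots)$, after which the induction hypothesis applied to the shorter pair $(A',B)$ finishes the rearrangement. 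Care must be taken that all the intermediate homotopies are between \emph{parallel} paths, so that concatenating them is legitimate; this is where one uses that homotopies are preserved by the left/right translation action and by pre- and post-composition with a common path.

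The reduction from an arbitrary edge $e_1^{\varepsilon_1}$ together with an arbitrary path $B$ to the genuine base case $n=m=1$ is the second layer of the induction: one peels edges off $B$ in exactly the same way, so in effect the argument is a double induction, the inner one reducing $(\text{edge},B)$ to $(\text{edge},\text{edge})$. I expect the main obstacle to be purely bookkeeping rather than conceptual: keeping track of which translate $u.\sigma.v$ of each square cell is being used so that the endpoints of every intermediate path match up, and making sure the sign cases $\varepsilon_i,\delta_j=\pm1$ do not introduce spurious non-parallel comparisons. No deep input is needed beyond the null-homotopy of the square $2$-cells and the functoriality of homotopy under the $F$-action, both of which are available from Sections \ref{sqc} and \ref{pc}.
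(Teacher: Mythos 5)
Your proposal is correct and follows essentially the same route as the paper: induction peeling one edge off at a time, with the base case $n=m=1$ supplied by the square $2$-cells $[e_{1},f_{1}]$ and the sign cases $\varepsilon_{1},\delta_{1}=\pm 1$ handled via the null-homotopy of the cyclic permutations of $(\partial[e_{1},f_{1}])^{\pm 1}$ (the paper does this by explicit inverse-and-composition manipulations, and it peels the last edge off the longer path rather than the first edge off $A$, but these differences are immaterial). The one slip is the order of the two homotopies in your inductive step: the intermediate concatenation $(\iota(e_{1}^{\varepsilon_{1}}).B)\circ(e_{1}^{\varepsilon_{1}}.\tau B)\circ(A'.\iota B)\circ(\dots)$ is not composable as written, so you must first apply the induction hypothesis to the pair $(A',B)$, turning the tail $(A'.\iota B)\circ(\tau A'.B)$ into $(\tau(e_{1}^{\varepsilon_{1}}).B)\circ(A'.\tau B)$, and only then swap $e_{1}^{\varepsilon_{1}}$ past $B$ at the front.
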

\begin{proof}
	The proof is done by induction on the maximum of $n$ and $m$. If $m=n=1$, then it follows that
	$$A. \iota B \circ \tau A. B= e^{\varepsilon_{1}}_{1}.\iota f^{\delta_{1}}_{1} \circ \tau e^{\varepsilon_{1}}_{1}. f^{\delta_{1}}_{1} \sim \iota e^{\varepsilon_{1}}_{1}. f^{\delta_{1}}_{1} \circ e^{\varepsilon_{1}}_{1}. \tau f^{\delta_{1}}_{1}= \iota A. B \circ A. \tau B.$$
Indeed, if $\varepsilon_{1}=\delta_{1}=1$, then this is an immediate consequence of the 2-cell $[e_{1},f_{1}]$. If $\varepsilon_{1}=\delta_{1}=-1$, then, since
\begin{equation} \label{hsc}
e_{1}.\iota f_{1} \circ \tau e_{1}. f_{1} \sim \iota e_{1}. f_{1} \circ e_{1}. \tau f_{1},
\end{equation}
it follows by taking inverses that
\begin{align*}
\iota e_{1}^{\varepsilon_{1}}.f_{1}^{\delta_{1}} \circ e_{1}^{\varepsilon_{1}}. \tau f_{1}^{\delta_{1}}&= \tau e_{1}. f_{1}^{-1} \circ e_{1}^{-1}.\iota f_{1}\\
& \sim e_{1}^{-1}.\tau f_{1} \circ \iota e_{1}. f_{1}^{-1}\\
&= e^{\varepsilon_{1}}_{1}.\iota f^{\delta_{1}}_{1} \circ \tau e^{\varepsilon_{1}}_{1}. f^{\delta_{1}}_{1}.
\end{align*}
In the case when $\varepsilon_{1}=1$ and $\delta_{1}=-1$, after composing on the left of (\ref{hsc}) by $\iota e_{1}.f_{1}^{-1}$ we obtain
$$\iota e_{1}.f_{1}^{-1} \circ e_{1}.\iota f_{1} \circ \tau e_{1}. f_{1} \sim \iota e_{1}.f_{1}^{-1} \circ\iota e_{1}. f_{1} \circ e_{1}. \tau f_{1}= e_{1}. \tau f_{1},$$
and then after composing the above on the right by $\tau e_{1}. f_{1}^{-1}$ we get
$$\iota e_{1}.f_{1}^{-1} \circ e_{1}.\iota f_{1} \sim e_{1}. \tau f_{1} \circ \tau e_{1}. f_{1}^{-1},$$
which is the same as
$$\iota e_{1}^{\varepsilon_{1}}.f_{1}^{\delta_{1}} \circ e_{1}^{\varepsilon_{1}}.\tau f_{1}^{\delta_{1}} \sim e_{1}^{\varepsilon_{1}}. \iota f_{1}^{\delta_{1}} \circ \tau e_{1}^{\varepsilon_{1}}. f_{1}^{\delta_{1}}.$$
The proof for the case when $\varepsilon_{1}=-1$ and $\delta_{1}=1$ is symmetric to the above and is omitted. 

For the inductive step, let (for instance) $B$ be the path of maximal length $m>1$. For the path $B'=f_{1}^{\delta_{1}} \circ \dots \circ f_{m-1}^{\delta_{m-1}}$ we know by induction that 
	$$A. \iota B' \circ \tau A. B' \sim \iota A. B' \circ A. \tau B'.$$
	Again, by induction for $A$ and $f_{m}^{\delta_{m}}$ we have that
	$$A. \iota f_{m}^{\delta_{m}} \circ \tau A. f_{m}^{\delta_{m}} \sim \iota A. f_{m}^{\delta_{m}} \circ A. \tau f_{m}^{\delta_{m}}.$$
	It follows that
	\begin{align*}
		A. \iota B \circ \tau A. B&= A. \iota B \circ \tau A. B' \circ \tau A. f_{m}^{\delta_{m}}\\
		&= A. \iota B' \circ \tau A. B' \circ \tau A. f_{m}^{\delta_{m}}\\
		& \sim \iota A. B' \circ A. \tau B' \circ \tau A. f_{m}\\
		&= \iota A. B' \circ A.\iota f_{m}^{\delta_{m}} \circ \tau A. f_{m}^{\delta_{m}}\\
		& \sim \iota A. B' \circ  \iota A. f_{m}^{\delta_{m}} \circ A. \tau f_{m}^{\delta_{m}} \\
		&=  \iota A. B \circ A. \tau B.
	\end{align*}
	There is a similar proof when $A$ is of maximal length.
\end{proof}

For every $u \in \hat{F}$, regarded as an element of the free monoid $F$ on $\mathbf{x} \cup \mathbf{x}^{-1}$, and for every $r \in \mathbf{r}$, we see that
\begin{align*}
\eta(\mu \sigma (^{u}r))={^{u}}\eta(\mu \sigma(r))&={^{u}}\langle(1,r,1,1) \rangle\\
&=\langle (u,r,1,u^{-1})^{\ast}\rangle\\
&=   \langle T_{uru^{-1}}\circ (u,r,1,u^{-1}) \circ T^{-1}_{uu^{-1}}\rangle .
\end{align*}
The path $T_{uru^{-1}}\circ (u,r,1,u^{-1}) \circ T_{uu^{-1}}$ is a composition of $T_{uru^{-1}}$ which is a trivial path from the freely reduced word $(uru^{-1})^{\ast}$ to $uru^{-1}$ followed by the edge $(u,r,1,u^{-1})$ and then by the inverse of the trivial path $T_{uu^{-1}}$ from $uu^{-1}$ to 1. 
Similarly to the above we have that
\begin{align*}
\eta(\mu \sigma((^{u}r)^{-1}))={^{u}}\eta(\mu \sigma (r^{-1}))&={^{u}}\langle(1,r^{-1},1,1) \rangle\\
&=\langle (u,r^{-1},1,u^{-1})^{\ast}\rangle\\
&=\langle T_{ur^{-1}u^{-1}}  \circ (u,r^{-1},1,u^{-1})\circ  T^{-1}_{uu^{-1}}\rangle.
\end{align*}
Then we have 
\begin{align*}
	\eta(\mu \sigma(^{u}r({^{u}}r)^{-1}))&= \eta(\mu \sigma (^{u}r)) + \eta(\mu \sigma ((^{u}r)^{-1}))\\
	&=\langle ((T_{uru^{-1}}\circ (u,r,1,u^{-1}) \circ T^{-1}_{uu^{-1}})+(T_{ur^{-1}u^{-1}}  \circ (u,r^{-1},1,u^{-1})\circ  T^{-1}_{uu^{-1}}))^{\ast} \rangle\\
	&=\langle T_{(ur^{-1}u)^{\ast}(ur^{-1}u^{-1})^{\ast}} \circ T_{uru^{-1}}\cdot (ur^{-1}u^{-1})^{\ast} \circ (u,r,1,u^{-1})\cdot (ur^{-1}u^{-1})^{\ast} \\
	 &\circ   T^{-1}_{uu^{-1}} \cdot (ur^{-1}u^{-1})^{\ast} \circ T_{ur^{-1}u^{-1}} \circ (u,r^{-1}, 1, u^{-1}) \circ  T^{-1}_{uu^{-1}} \rangle.
\end{align*}
Now we define two closed paths in $(\mathcal{D}, \mathbf{t})$. First we let 
\begin{align*}
	P(r,u)= T_{(uru^{-1})(ur^{-1}u^{-1})} & \circ (u, r, 1, u^{-1}ur^{-1}u^{-1})  \\
	&\circ(T^{-1}_{uu^{-1}}\cdot ur^{-1}u^{-1} ) \circ (u, r^{-1},1, u^{-1}) \circ T^{-1}_{uu^{-1}},
\end{align*}
and second
$$Q(r,u)= T_{urr^{-1}u^{-1}} \circ (u, r,1, r^{-1}u^{-1}) \circ (u, r^{-1}, 1, u^{-1}) \circ T^{-1}_{uu^{-1}}.$$

\begin{proposition} \label{th}
The presentation $\cP$ is aspherical if and only if $\pi_{1}(\mathcal{D},\mathbf{t},1)$ is generated as an abelian group by the homotopy classes of loops $Q(r,u)$ with $r \in \mathbf{r}$ and $u \in F$.
\end{proposition}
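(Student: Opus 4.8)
The statement connects asphericity of $\cP$ to a generation property of $\pi_{1}(\mathcal{D},\mathbf{t},1)$. By Theorem \ref{wdom} together with the isomorphisms in (\ref{ki}), asphericity of $\cP$ is equivalent to $\tilde\Pi=\hat{\mathfrak{U}}$, which under $\eta$ translates into the assertion that $\pi_{1}(\mathcal{D},\mathbf{t},1)$ (equivalently $H_{1}(\mathcal{D},\mathbf{t},1)$, since the Hurewicz map is an isomorphism here) is generated as an abelian group by the homotopy classes $\eta(\mu\sigma({}^{u}r({}^{u}r)^{-1}))$ for $r\in\mathbf{r}$ and $u\in F$ — because $\hat{\mathfrak{U}}$ is generated (as a $G$-module, hence as an abelian group after spreading over all $u$) by the elements $\mu\sigma(rr^{-1})$. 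So the whole content of the proposition is to show that the homotopy class of $\eta(\mu\sigma({}^{u}r({}^{u}r)^{-1}))$ and the homotopy class of $Q(r,u)$ generate the same subgroup of $\pi_{1}(\mathcal{D},\mathbf{t},1)$ — in fact, up to adding loops that are already null-homotopic or already accounted for, they coincide.

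**Key steps.** First I would record the explicit expression for $\eta(\mu\sigma({}^{u}r({}^{u}r)^{-1}))$ computed just above the statement, and compare it with $P(r,u)$: a direct inspection shows that $\eta(\mu\sigma({}^{u}r({}^{u}r)^{-1}))$ is the homotopy class of $P(r,u)$, after using that parallel trivial paths in $(\mathcal{D},\mathbf{t})$ are homotopic (to absorb the bookkeeping $T$-paths) and that the defining $\mathbf{t}$-cells let us slide trivial edges past the $r$-edges. Second, I would show $P(r,u)$ is homotopic to $Q(r,u)$: both are closed paths based (after starring) at the reduced word, and they differ by how the edge $(u,r^{-1},1,u^{-1})$ is positioned relative to the translate of $(u,r,1,u^{-1})$; the key tool is Lemma \ref{[A,B]} applied to $A=(u,r,1,u^{-1})$ and a suitable path $B$, which exchanges the order of the two "copies" of the $r$-edge at the cost of a homotopy, plus again the trivial-path homotopies. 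Third, I would pass from a single $u$ to all of $F$: since the $\hat F$-action on $\Sigma^{\ast}$ sends $\langle Q(r,1)\rangle$-type classes to $\langle Q(r,u)\rangle$-type classes, and since $\hat{\mathfrak{U}}$ as a $G$-module is generated by the $\mu\sigma(rr^{-1})$, the abelian-group generators of $\eta(\hat{\mathfrak{U}})$ are exactly the $\langle P(r,u)\rangle$, hence exactly the $\langle Q(r,u)\rangle$. Finally I would invoke Theorem \ref{wdom}(iii) and (\ref{ki}): $\cP$ aspherical $\iff \tilde\Pi=\hat{\mathfrak{U}}\iff\pi_{1}(\mathcal{D},\mathbf{t},1)=\eta(\hat{\mathfrak{U}})\iff\pi_{1}(\mathcal{D},\mathbf{t},1)$ is generated as an abelian group by the $\langle Q(r,u)\rangle$.

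**The main obstacle.** The delicate step is the homotopy $P(r,u)\sim Q(r,u)$, i.e. controlling the trivial paths. The paths $P(r,u)$ and $Q(r,u)$ carry several $T$-subpaths ($T_{uu^{-1}}$, $T_{uru^{-1}}$, $T_{urr^{-1}u^{-1}}$, $T_{(uru^{-1})(ur^{-1}u^{-1})}$, and their translates), and one must check carefully that after applying Lemma \ref{[A,B]} to reorder the two $r$-edges, the residual trivial paths on both sides are genuinely parallel, so that the "any two parallel trivial paths are homotopic" principle applies. Getting the endpoints to match requires being scrupulous about which reduced word each $T$-path runs to, and about the translations $T^{-1}_{uu^{-1}}\cdot w$ appearing in $P(r,u)$. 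A secondary, more routine point is verifying that spreading the single generator $\mu\sigma(rr^{-1})$ over the $G$-action really does yield all of $\hat{\mathfrak{U}}$ as an abelian group (this is immediate from $\hat{\mathfrak{U}}$ being the $G$-submodule generated by those elements). Once the trivial-path bookkeeping is pinned down, the rest is the translation dictionary already built in Theorem \ref{wdom} and (\ref{ki}).
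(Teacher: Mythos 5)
Your proposal is correct and follows essentially the same route as the paper: reduce via Theorem \ref{wdom} and the isomorphisms in (\ref{ki}) to the statement that $\pi_{1}(\mathcal{D},\mathbf{t},1)$ is generated by the classes $\eta(\mu\sigma({}^{u}r({}^{u}r)^{-1}))$, then identify $\eta(\mu\sigma({}^{u}r({}^{u}r)^{-1}))=\langle P(r,u)\rangle$ and prove $P(r,u)\sim Q(r,u)$ using Lemma \ref{[A,B]} together with the homotopy of parallel trivial paths. You correctly isolate the trivial-path bookkeeping as the only delicate point, which is exactly where the paper expends its effort.
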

\begin{proof}
First note that the presentation $\cP$ is aspherical if and only if the set of all $\eta(\mu \sigma(^{u}r({^{u}}r)^{-1}))$ generates $\pi_{1}(\mathcal{D},\mathbf{t},1)$. The claim follows directly if we prove that for each $r \in \mathbf{r}$ and $u \in F$, $\eta(\mu \sigma(^{u}r({^{u}}r)^{-1}))=\langle P(r,u) \rangle $ and that $P(r,u) \sim Q(r,u)$.

Let us prove first that $\eta(\mu \sigma(^{u}r({^{u}}r)^{-1}))=\langle P(r,u) \rangle $. Consider the following paths in $(\mathcal{D}, \mathbf{t})$. First, 
we let
	\begin{align*}
		a&=(uru^{-1})^{\ast}\cdot T_{ur^{-1}u^{-1}},\\
		b&=T_{uru^{-1}} \cdot (ur^{-1}u^{-1}),\\
		d&=T_{uru^{-1}} \cdot (ur^{-1}u^{-1})^{\ast},\\
		c&=(uru^{-1})\cdot T_{ur^{-1}u^{-1}},
	\end{align*}
and observe from lemma \ref{[A,B]} that 
\begin{equation} \label{abcd}
	a \circ b \sim d \circ c
\end{equation}
Second, we let
	\begin{align*}
		e&=(u,r,1,u^{-1}(ur^{-1}u^{-1})^{\ast}),\\
		c&=(uru^{-1})\cdot T_{ur^{-1}u^{-1}},\\
		g&=(u,r,1,u^{-1}(ur^{-1}u^{-1})),\\
		f&=(uu^{-1})\cdot T_{ur^{-1}u^{-1}},
	\end{align*}
where again from lemma \ref{[A,B]} we have that $c \circ g \sim e \circ f$. This implies that
\begin{equation} \label{c-1}
	c^{-1} \sim g \circ f^{-1} \circ e^{-1}.
\end{equation}
And finally, let
	\begin{align*}
		y&= T^{-1}_{uu^{-1}}\cdot (ur^{-1}u^{-1})^{\ast} \\
		f&= (uu^{-1})\cdot T_{ur^{-1}u^{-1}}\\
		x&=T^{-1}_{uu^{-1}}\cdot (ur^{-1}u^{-1})\\
		z&=1 \cdot T_{ur^{-1}u^{-1}},
	\end{align*}
where as before $f \circ x \sim y \circ z$. This on the other hand implies that
\begin{equation} \label{f-1}
f^{-1} \sim x \circ z^{-1} \circ y^{-1}.
\end{equation}
Further we write $\ell= T_{(uru^{-1})^{\ast}(ur^{-1}u^{-1})^{\ast}}$, $\ell_{1}= T_{(uru^{-1})(ur^{-1}u^{-1})}$, $k=(u,r^{-1},1,u^{-1})$ and $h=T^{-1}_{uu^{-1}}$. With the above abbreviations we have
	\begin{align*}
		\ell &\sim \ell_{1} \circ b^{-1} \circ  a^{-1}&& \text{($\parallel$ trivial parallel paths are $\sim$)}\\
		& \sim \ell_{1} \circ  c^{-1} \circ d^{-1} && \text{(from (\ref{abcd}))}\\
		& \sim \ell_{1} \circ (g \circ f^{-1} \circ e^{-1}) \circ d^{-1} && \text{(from (\ref{c-1}))}\\
		&\sim \ell_{1} \circ g \circ (x \circ z^{-1} \circ y^{-1}) \circ e^{-1} \circ d^{-1} && \text{(from  (\ref{f-1})).}
	\end{align*} 
	It follows that
	\begin{align*}
		\eta(\mu \sigma (^{u}r({^{u}}r)^{-1}))&=\langle \ell \circ (d \circ e \circ y \circ z \circ  k \circ h) \rangle \\
		&=\langle (\ell_{1} \circ g \circ (x \circ z^{-1} \circ y^{-1}) \circ e^{-1} \circ d^{-1}) \circ (d \circ e \circ y \circ z \circ k \circ h)\rangle \\
		& = \langle  \ell_{1} \circ g \circ x \circ k \circ h \rangle \\
		&= \langle P(r,u) \rangle.
	\end{align*}
	Secondly, we prove that $P(r,u) \sim Q(r,u) $. Indeed, if we consider paths
	\begin{align*}
		A&=(u, r, 1, u^{-1}ur^{-1}u^{-1})\\
		B&= (ur) \cdot  T^{-1}_{u^{-1}u} \cdot (r^{-1}u^{-1})\\
		C&= u \cdot  T^{-1}_{u^{-1}u} \cdot  (r^{-1}u^{-1})\\
		D&= (u, r, 1, r^{-1}u^{-1}),
	\end{align*}
which from lemma \ref{[A,B]} satisfy $A \circ C \sim B \circ D$,	then we have
	\begin{align*}
		P(r,u) & = T_{(uru^{-1})(ur^{-1}u^{-1})}  \circ (u, r, 1, u^{-1}ur^{-1}u^{-1})  \circ(T^{-1}_{uu^{-1}}\cdot ur^{-1}u^{-1} ) \circ (u, r^{-1},1, u^{-1}) \circ T^{-1}_{uu^{-1}}\\
		&\sim T_{(uru^{-1})(ur^{-1}u^{-1})} \circ A \circ (u\cdot T^{-1}_{u^{-1}u}\cdot r^{-1}u^{-1}) \circ (u,r^{-1},1,u^{-1})\circ T^{-1}_{uu^{-1}}\\
		&= T_{(uru^{-1})(ur^{-1}u^{-1})} \circ A \circ C \circ (u,r^{-1},1,u^{-1})\circ T^{-1}_{uu^{-1}}\\
		&\sim T_{(uru^{-1})(ur^{-1}u^{-1})}  \circ B \circ D \circ (u,r^{-1},1,u^{-1})\circ T^{-1}_{uu^{-1}}\\
		&\sim T_{urr^{-1}u^{-1}} \circ B^{-1} \circ B \circ D \circ (u,r^{-1},1,u^{-1})\circ T^{-1}_{uu^{-1}}\\
		&= T_{urr^{-1}u^{-1}} \circ  D \circ (u,r^{-1},1,u^{-1})\circ T^{-1}_{uu^{-1}}\\
		&= Q(r,u).
		\end{align*}
This concludes the proof.
\end{proof}
Passing to homology we have the following
\begin{proposition} \label{thc}
	The presentation $\cP$ is aspherical if and only if $H_{1}(\mathcal{D},\mathbf{t},1)$ is generated as an abelian group by the homology classes of 1-cycles corresponding to loops $Q(r,u)$ with $r \in \mathbf{r}$ and $u \in F$.
\end{proposition}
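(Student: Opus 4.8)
The plan is to deduce Proposition \ref{thc} directly from Proposition \ref{th} by applying the Hurewicz homomorphism $h:\pi_{1}(\mathcal{D},\mathbf{t},1)\to H_{1}(\mathcal{D},\mathbf{t},1)$, which, as noted in Section \ref{pc}, is an isomorphism because $\pi_{1}(\mathcal{D},\mathbf{t},1)\cong\pi_{2}(\hat{\mathcal{P}})$ is abelian. First I would observe that $h$ carries the homotopy class of each loop $Q(r,u)$ to the homology class of the corresponding $1$-cycle, and that, being a group isomorphism between abelian groups, it sends a generating set to a generating set and a non-generating set to a non-generating set. Hence $\pi_{1}(\mathcal{D},\mathbf{t},1)$ is generated as an abelian group by the classes $\langle Q(r,u)\rangle$ if and only if $H_{1}(\mathcal{D},\mathbf{t},1)$ is generated as an abelian group by the classes $h(\langle Q(r,u)\rangle)$, i.e. by the homology classes of the $1$-cycles corresponding to the $Q(r,u)$.

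Combining this equivalence with Proposition \ref{th}, which states that $\cP$ is aspherical if and only if $\pi_{1}(\mathcal{D},\mathbf{t},1)$ is generated as an abelian group by the homotopy classes of the $Q(r,u)$, immediately yields that $\cP$ is aspherical if and only if $H_{1}(\mathcal{D},\mathbf{t},1)$ is generated as an abelian group by the homology classes of the $1$-cycles corresponding to the $Q(r,u)$, which is exactly the statement. The only point worth spelling out is the compatibility of $h$ with the identification, already adopted in Section \ref{pc}, of the homotopy class of a loop $\xi$ with $h(\xi)$; under this identification the two formulations are literally the same statement, so nothing further is needed.

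The main obstacle is essentially bookkeeping rather than mathematics: one must be sure that ``generated as an abelian group'' is the correct notion on both sides and that the isomorphism $h$ is genuinely one of abelian groups (not merely of sets), so that generating sets correspond. This is guaranteed because $\pi_{1}(\mathcal{D},\mathbf{t},1)$ is abelian and $h$ is the Hurewicz map, hence a homomorphism, and in fact an isomorphism here by \eqref{ki}. No further estimates or constructions are required; the proposition is a formal transcription of Proposition \ref{th} through $h$.
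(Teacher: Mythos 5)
Your proposal is correct and is essentially the paper's own argument: the paper gives no explicit proof, simply writing ``Passing to homology we have the following,'' relying on exactly the Hurewicz isomorphism $h:\pi_{1}(\mathcal{D},\mathbf{t},1)\to H_{1}(\mathcal{D},\mathbf{t},1)$ set up at the end of Section \ref{pc} (valid because $\pi_{1}(\mathcal{D},\mathbf{t},1)\cong\pi_{2}(\hat{\mathcal{P}})$ is abelian), under which generating sets correspond. Your write-up just makes that implicit step explicit.
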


\begin{definition}
	In $(\mathcal{D},\mathbf{t})$ we let $\mathbf{q}$ be the set of closed paths $Q(r,1)$ with $r \in \mathbf{r}$.
\end{definition}
If we attach to $(\mathcal{D},\mathbf{t})$ 2-cells $\sigma$ along the closed paths $u. Q(r,1). v$ with $u,v \in F$ and 3-cells $[e,\sigma]$ and $[\sigma, e]$ for every 2-cell $\sigma$ and each positive edge $e$, then we obtain a new 3-complex $(\mathcal{D},\mathbf{q} \cup \mathbf{t})$. The asphericity of $\cP$ is encoded in the homology of $(\mathcal{D},\mathbf{q} \cup \mathbf{t})$ as the following shows.
\begin{theorem} \label{asphh}
The presentation $\cP$ is aspherical if and only if $H_{1}(\mathcal{D},\mathbf{q} \cup \mathbf{t})=0$.
\end{theorem}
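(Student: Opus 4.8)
The plan is to show that $H_1(\mathcal{D},\mathbf{q}\cup\mathbf{t})$ is a direct sum of copies of $\tilde{\Pi}/\hat{\mathfrak{U}}$, indexed by $G$, so that it vanishes exactly when $\tilde{\Pi}=\hat{\mathfrak{U}}$, which by Theorem \ref{wdom} is equivalent to the asphericity of $\cP$. First I would observe that the $3$-cells of $(\mathcal{D},\mathbf{q}\cup\mathbf{t})$ are irrelevant in dimension $1$: in the complex $\mathbf{C}(\mathcal{D},\mathbf{q}\cup\mathbf{t})$ one has $H_1=\text{Ker}\,\partial_1/\operatorname{im}\tilde{\partial}_2$, and $\tilde{\partial}_3$ does not enter this quotient, so adjoining the $2$-cells along the loops $u.Q(r,1).v$ (and the $3$-cells) enlarges $\operatorname{im}\tilde{\partial}_2$ by exactly the subgroup generated by the $1$-cycles $\tilde{\partial}_2[u,Q(r,1),v]$, that is by the $1$-cycles of those loops, $u,v\in F$, $r\in\mathbf{r}$. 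Hence $H_1(\mathcal{D},\mathbf{q}\cup\mathbf{t})$ is the quotient of $H_1(\mathcal{D},\mathbf{t})$ by that subgroup. I would then decompose $H_1(\mathcal{D},\mathbf{t})$ over the connected components of $(\mathcal{D},\mathbf{t})$, which are indexed by $G$, and use that the two-sided translation action of $F$ carries the relevant $1$-cycles to one another and, for each word $w$ representing $g\in G$, induces an isomorphism $H_1(\mathcal{D},\mathbf{t},1)\cong H_1(\mathcal{D},\mathbf{t},g)$ with inverse the translation by any word representing $g^{-1}$ — this last being an isomorphism because translation by a freely trivial word acts as the identity on the homology of a component, which one reads off from the $\mathbf{t}$-cells together with the square $2$-cells $[t,e]$ on trivial edges $t$. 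This reduces the proof to the component of $1$: it suffices to show that $H_1(\mathcal{D},\mathbf{t},1)$ modulo the subgroup $V$ generated by the classes of those loops $u.Q(r,1).v$ lying in the component of $1$ is isomorphic to $\tilde{\Pi}/\hat{\mathfrak{U}}$.

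For this I would pass through the Pride isomorphism $\psi$ of \eqref{ki}, noting that on a $1$-cycle it is computed simply by applying $\psi_0$ (the centrality of $\text{Ker}\,\partial$ makes the choice of basepoint immaterial). Since $\psi_0$ kills every trivial edge and sends an edge $(a,r,\varepsilon,b)$ to $({}^{[a]}r)^{\varepsilon}$, its value depends on $(a,r,\varepsilon,b)$ only through $r$, $\varepsilon$ and the element of $\hat{F}$ represented by $a$, and not on the suffix $b$. Therefore, whenever $u.Q(r,1).v$ lies in the component of $1$, the loops $u.Q(r,1).v$ and $Q(r,u)$ have the same image under $\psi_0$, namely $\mu\sigma({}^{u}r\,({}^{u}r)^{-1})$ (after the identification $\Sigma\cong\mathcal{G}(\Upsilon)$). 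As $\psi$ carries $H_1(\mathcal{D},\mathbf{t},1)=\text{Ker}\,\partial^{\ast}$ isomorphically onto $\text{Ker}\,\partial\cong\tilde{\Pi}$, the subgroup $V$ is carried onto the subgroup of $\tilde{\Pi}$ generated by all $\mu\sigma({}^{u}r\,({}^{u}r)^{-1})$, $u\in F$, $r\in\mathbf{r}$; and since $\hat{\mathfrak{U}}$ is central in $\mathcal{G}(\Upsilon)$ (Lemma \ref{central}) one checks $\mu\sigma(({}^{u}r)^{-1}\,{}^{u}r)=\mu\sigma({}^{u}r\,({}^{u}r)^{-1})$, so that this subgroup contains every generator of $\hat{\mathfrak{U}}$ and equals $\hat{\mathfrak{U}}$. (The same subgroup is generated by the classes of the loops $Q(r,u)$, so this is precisely Proposition \ref{thc} seen through $\psi$.) Consequently $H_1(\mathcal{D},\mathbf{t},1)/V\cong\tilde{\Pi}/\hat{\mathfrak{U}}$, and combining with the reduction above, $H_1(\mathcal{D},\mathbf{q}\cup\mathbf{t})=0$ if and only if $\tilde{\Pi}=\hat{\mathfrak{U}}$, that is, if and only if $\cP$ is aspherical.

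The verifications about $\tilde{\partial}_2$ on $C_2^{\mathbf{q}}$ and the compatibilities of $\psi_0$ are routine and essentially already contained in \cite{SES} and \cite{LDHM2}. The step I expect to cost the most is the component/translation argument: one must make precise that translation by words gives well-defined isomorphisms between the homologies of the different components, and that under these the $\mathbf{q}$-relators of an arbitrary component match those of the component of $1$ — equivalently, that $H_1(\mathcal{D},\mathbf{t})$ modulo the $\mathbf{q}$-cycles is a $\mathbb{Z}G$-module whose vanishing is detected in a single component. For the presentation of the trivial group needed for the main theorem this obstacle is absent, since then $(\mathcal{D},\mathbf{t})$ is connected and $H_1(\mathcal{D},\mathbf{q}\cup\mathbf{t})\cong\tilde{\Pi}/\hat{\mathfrak{U}}$ directly.
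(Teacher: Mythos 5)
Your proposal is correct and follows essentially the same route as the paper: reduce $H_{1}(\mathcal{D},\mathbf{q}\cup\mathbf{t})$ to the quotient of $H_{1}(\mathcal{D},\mathbf{t})$ by the classes of the $1$-cycles of the loops $u.Q(r,1).v$, pass to the component of $1$ via the translation isomorphisms of \cite{LDHM2} together with Lemma \ref{Jm}, and identify those classes with the generators $\mu\sigma({}^{u}r({}^{u}r)^{-1})$ of $\hat{\mathfrak{U}}$ so that Proposition \ref{thc} (equivalently Theorem \ref{wdom} transported through the isomorphisms (\ref{ki})) applies. The only real difference is local: where the paper establishes the identification $[u.q(r,1).v]=[q(r,u)]$ inside homology via Lemmas \ref{Jm} and \ref{qr}, you compute the image under $\psi_{0}$ directly, using that $\psi_{0}$ kills trivial edges and is insensitive to suffixes; the two verifications are interchangeable.
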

 
To prove the theorem we first note the following two lemmas.
\begin{lemma} \label{Jm}
	For every $\varsigma \in Z_{1}(\mathcal{D}, \mathbf{t})$ and every $u,v \in F$ such that $\bar{u}=\bar{v}$, $\varsigma \cdot u +B_{1}(\mathcal{D}, \mathbf{t})= \varsigma \cdot v +B_{1}(\mathcal{D}, \mathbf{t})$.
\end{lemma}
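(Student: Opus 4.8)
The plan is to reduce the statement to the case where $u$ and $v$ differ by a single elementary rewriting step, and then to produce the required $1$-boundary explicitly from the square $2$-cells of the Squier complex, using only the hypothesis that $\varsigma$ is a cycle.

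\textbf{Reduction to one step.} Since the connected components of the $1$-skeleton of $(\mathcal{D},\mathbf{t})$ are precisely the congruence classes of the monoid presentation $\mathcal{M}$, the hypothesis $\bar{u}=\bar{v}$ yields a path $u=w_{0},w_{1},\dots,w_{k}=v$ in which each pair $w_{j-1},w_{j}$ is joined by an edge, so $\{w_{j-1},w_{j}\}=\{\iota f_{j},\tau f_{j}\}$ for a suitable positive edge $f_{j}$. Because the relation ``$\varsigma\cdot w\equiv\varsigma\cdot w'\pmod{B_{1}(\mathcal{D},\mathbf{t})}$'' is an equivalence relation and is symmetric in $w,w'$, it suffices to prove $\varsigma\cdot\iota f\equiv\varsigma\cdot\tau f$ for every single positive edge $f$; the general case then follows by telescoping $\varsigma\cdot u-\varsigma\cdot v=\sum_{j}(\varsigma\cdot w_{j-1}-\varsigma\cdot w_{j})$.

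\textbf{The single-step case.} Write $\varsigma=\sum_{i}\lambda_{i}g_{i}$ with the $g_{i}$ positive edges and $\lambda_{i}\in\mathbb{Z}$, and consider the $2$-chain $\sum_{i}\lambda_{i}[g_{i},f]$, which lies in $C_{2}(\mathcal{D},\mathbf{t})$ because the square $2$-cells $[e,f]$ are attached for every ordered pair of positive edges. Using $\partial_{2}[g_{i},f]=g_{i}\cdot(\iota f-\tau f)-(\iota g_{i}-\tau g_{i})\cdot f$, the fact that $\tilde{\partial}_{2}$ restricts to $\partial_{2}$ on $C_{2}$, and summing with coefficients $\lambda_{i}$, one obtains
\begin{equation*}
\tilde{\partial}_{2}\Bigl(\sum_{i}\lambda_{i}[g_{i},f]\Bigr)=\varsigma\cdot(\iota f-\tau f)-(\partial_{1}\varsigma)\cdot f=\varsigma\cdot\iota f-\varsigma\cdot\tau f,
\end{equation*}
the last equality because $\partial_{1}\varsigma=0$ for a $1$-cycle. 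Hence $\varsigma\cdot\iota f-\varsigma\cdot\tau f\in B_{1}(\mathcal{D},\mathbf{t})$, which completes the single-step case and, with the reduction above, the lemma.

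\textbf{On the difficulty.} I do not expect any genuine obstacle here: the argument is carried entirely by two observations, namely that the commuting-square cells $[g_{i},f]$ are available for the whole cycle $\varsigma$, and that the leftover term $(\partial_{1}\varsigma)\cdot f$ in the boundary computation vanishes exactly because $\varsigma$ is a $1$-cycle. The only points requiring mild care are the bookkeeping with the two-sided $F$-action on chains and the appeal to the component description of the Squier graph in the reduction to a single rewriting step.
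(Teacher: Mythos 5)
Your proof is correct and follows essentially the same route as the paper: reduce to the case of a single positive edge $f$ and then observe that $\varsigma\cdot(\iota f-\tau f)\in B_{1}(\mathcal{D})\subseteq B_{1}(\mathcal{D},\mathbf{t})$. The only difference is that the paper obtains this last fact by citing Lemma 4.1 of Pride's \emph{Low-dimensional homotopy theory for monoids}, whereas you prove it directly via the boundary of the $2$-chain $\sum_{i}\lambda_{i}[g_{i},f]$ and the vanishing of $\partial_{1}\varsigma$ --- which is precisely the content of that cited lemma.
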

\begin{proof}
	It is enough to prove that for every positive edge $f$, we have $\varsigma \cdot \iota f +B_{1}(\mathcal{D}, \mathbf{t})= \varsigma \cdot \tau f +B_{1}(\mathcal{D}, \mathbf{t})$. From Lemma 4.1 of \cite{LDHM1} it follows that $\varsigma\cdot (\iota f - \tau f) \in B_{1}(\mathcal{D})$. But $B_{1}(\mathcal{D}) \subseteq B_{1}(\mathcal{D}, \mathbf{t})$ and we are done.
\end{proof}

If $u=x_{1}^{\varepsilon_{1}}\dots x_{n}^{\varepsilon_{n}} \in F$ is any word of length $|u|=n \in \mathbb{N}$, then a trivial path from 1 to $uu^{-1}$ is the following 
$$T_{uu^{-1}}=(1,x_{1}^{\varepsilon_{1}}x_{1}^{-\varepsilon_{1}},1,1)^{-1} \circ \dots \circ  (x_{1}^{\varepsilon_{1}}\dots x_{|u|-1}^{\varepsilon_{|u|-1}}, x_{|u|}^{\varepsilon_{|u|}}x_{|u|}^{-\varepsilon_{|u|}},1, x_{|u|-1}^{-\varepsilon_{|u|-1}}\dots x_{1}^{-\varepsilon_{1}})^{-1}.$$
We write for short
\begin{align*}
t_{u}^{(1)}&=(1,x_{1}^{\varepsilon_{1}}x_{1}^{-\varepsilon_{1}},1,1)\\
...\\
t_{u}^{(|u|)}&=(x_{1}^{\varepsilon_{1}}\dots x_{|u|-1}^{\varepsilon_{|u|-1}}, x_{|u|}^{\varepsilon_{|u|}}x_{|u|}^{-\varepsilon_{|u|}},1, x_{|u|-1}^{-\varepsilon_{|u|-1}}\dots x_{1}^{-\varepsilon_{1}})^{-1},
\end{align*}
and let 
$$\tau_{uu^{-1}}=t_{u}^{(1)}+\dots + t_{u}^{(|u|)}.$$ 
\begin{definition}
For every $r \in \mathbf{r}$ and $u \in F^{\ast}$, we let 
$$q(r,u)=(u,r,1,r^{-1}u^{-1})+(u,r^{-1},1,u^{-1})+\tau_{uu^{-1}}-\tau_{urr^{-1}u^{-1}},$$
be the 1-cycle that corresponds to the closed path $Q(r,u)$. When $u=1$, we let
$$q(r,1)=(1,r,1,r^{-1})+(1,r^{-1},1,1)-\tau_{rr^{-1}}$$
be the 1-cycle corresponding to $Q(r,1)$.
\end{definition}
\begin{lemma} \label{qr}
For every $r \in \mathbf{r}$ and $u \in F$, $u. q(r,1).u^{-1}+B_{1}(\mathcal{D},\mathbf{t})=q(r,u)+B_{1}(\mathcal{D},\mathbf{t})$.
\end{lemma}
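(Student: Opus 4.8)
Lemma~\ref{qr} asks for an equality of cosets in $C_1(\mathcal{D},\mathbf{t})/B_1(\mathcal{D},\mathbf{t})$, and the plan is to reduce it to the single fact recorded in Section~\ref{pc} that any two parallel trivial paths in $(\mathcal{D},\mathbf{t})$ are homotopic. First I would expand both sides as explicit $1$-chains. Using the two-sided $F$-action on $C_1(\mathcal{D},\mathbf{t})$ together with the definition of $q(r,1)$,
$$u\cdot q(r,1)\cdot u^{-1}=(u,r,1,r^{-1}u^{-1})+(u,r^{-1},1,u^{-1})-u\cdot\tau_{rr^{-1}}\cdot u^{-1},$$
while by definition
$$q(r,u)=(u,r,1,r^{-1}u^{-1})+(u,r^{-1},1,u^{-1})+\tau_{uu^{-1}}-\tau_{urr^{-1}u^{-1}}.$$
Subtracting, the common edge terms cancel, so it suffices to prove that
$$\tau_{urr^{-1}u^{-1}}-\tau_{uu^{-1}}-u\cdot\tau_{rr^{-1}}\cdot u^{-1}\in B_1(\mathcal{D},\mathbf{t}).$$

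Next I would identify each of these three terms as the $1$-chain of a trivial path. Comparing the defining formulas for $q(r,1)$ and $q(r,u)$ with the loops $Q(r,1)$ and $Q(r,u)$ pins down the sign convention: $\tau_W$ is the negative of the $1$-chain of the chosen trivial path $T_W$ from $W^{\ast}$ to $W$, and $u\cdot\tau_{rr^{-1}}\cdot u^{-1}$ is accordingly the negative of the $1$-chain of the translate $u\cdot T_{rr^{-1}}\cdot u^{-1}$. Hence the displayed chain is exactly the $1$-chain of the closed path $\Lambda=T_{uu^{-1}}\circ\bigl(u\cdot T_{rr^{-1}}\cdot u^{-1}\bigr)\circ T_{urr^{-1}u^{-1}}^{-1}$. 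One checks that this concatenation is composable: $T_{uu^{-1}}$ runs from $1$ to $uu^{-1}$, the translate $u\cdot T_{rr^{-1}}\cdot u^{-1}$ runs from $u\cdot 1\cdot u^{-1}=uu^{-1}$ to $u\cdot rr^{-1}\cdot u^{-1}=urr^{-1}u^{-1}$, and $T_{urr^{-1}u^{-1}}$ runs from $(urr^{-1}u^{-1})^{\ast}=1$ to $urr^{-1}u^{-1}=(ur)(ur)^{-1}$, so $\Lambda$ is a loop at $1$. Since translates of trivial edges are again trivial edges, $\Lambda$ is built entirely from trivial edges, and it factors as $P\circ Q^{-1}$ with $P=T_{uu^{-1}}\circ(u\cdot T_{rr^{-1}}\cdot u^{-1})$ and $Q=T_{urr^{-1}u^{-1}}$ two parallel trivial paths.

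Finally I would invoke the uniqueness of parallel trivial paths up to homotopy: $P\sim Q$, so $\Lambda=P\circ Q^{-1}$ is null-homotopic in $(\mathcal{D},\mathbf{t})$, hence it is filled by a $2$-chain and the $1$-cycle it carries lies in $B_1(\mathcal{D},\mathbf{t})$; combining with the reduction of the first paragraph gives $u\cdot q(r,1)\cdot u^{-1}-q(r,u)\in B_1(\mathcal{D},\mathbf{t})$, which is the assertion. The only delicate points here are bookkeeping ones: getting the signs of the $\tau$-chains right (these are forced by the definitions of $q(r,1),q(r,u)$ and of $Q(r,1),Q(r,u)$) and checking the composability of $\Lambda$ via the identification $urr^{-1}u^{-1}=(ur)(ur)^{-1}$ so that the relevant trivial paths genuinely share endpoints; I do not expect a serious obstacle beyond this.
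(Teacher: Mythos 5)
Your proposal is correct and follows essentially the same route as the paper: both arguments reduce the claim to the single homotopy $T_{uu^{-1}}\circ(u\cdot T_{rr^{-1}}\cdot u^{-1})\sim T_{urr^{-1}u^{-1}}$ between parallel trivial paths and then pass to the corresponding $1$-chains, your version merely packaging this as the null-homotopy of the loop $\Lambda$. The sign bookkeeping for the $\tau$-chains that you flag is handled exactly as you describe.
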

\begin{proof}
First note that $T^{-1}_{uu^{-1}} \circ T_{urr^{-1}u^{-1}} \sim u. T_{rr^{-1}}.u^{-1}$ since any two trivial paths with the same end points are homotopic with each other. For the corresponding 1-chains we have that $$\tau_{uu^{-1}}-\tau_{urr^{-1}u^{-1}}+B_{1}(\mathcal{D},\mathbf{t})=-u.\tau_{rr^{-1}}.u^{-1}+B_{1}(\mathcal{D},\mathbf{t}).$$
It follows now that
\begin{align*}
u. q(r,1).u^{-1}+B_{1}(\mathcal{D},\mathbf{t})&= (u,r,1,r^{-1}u^{-1})+ (u,r^{-1},1,u^{-1})- u. \tau_{rr^{-1}}.u^{-1}+B_{1}(\mathcal{D},\mathbf{t})\\
&=(u,r,1,r^{-1}u^{-1})+ (u,r^{-1},1,u^{-1})+\tau_{uu^{-1}}-\tau_{urr^{-1}u^{-1}}+B_{1}(\mathcal{D},\mathbf{t})\\
&=q(r,u)+B_{1}(\mathcal{D},\mathbf{t}),
\end{align*}
proving the claim.
\end{proof}

\begin{proof} (of theorem \ref{asphh})
If $H_{1}(\mathcal{D},\mathbf{q} \cup \mathbf{t})=0$, then $H_{1}(\mathcal{D},\mathbf{q} \cup \mathbf{t},1)=0$ which means that the homology classes of the loops $u.Q(r,1).v$ trivialize $H_{1}(\mathcal{D},\mathbf{t},1)$. We claim that every 1-cycle corresponding to a loop $u.Q(r,1).v$ which sits inside $(\mathcal{D},\mathbf{t},1)$ is in fact homologous to the 1-cycle corresponding to the loop $Q(r,u)$. Indeed, since $u. Q(r,1). v$ is a loop in $(\mathcal{D},\mathbf{t},1)$, then $\bar{v}=\bar{u}^{-1}$. It follows from lemma \ref{Jm} and lemma \ref{qr} that
\begin{align*}
u. q(r,1).v + B_{1}(\mathcal{D},\mathbf{t})&= u. q(r,1).u^{-1} + B_{1}(\mathcal{D},\mathbf{t})\\
&=q(r,u)+B_{1}(\mathcal{D},\mathbf{t}),
\end{align*}
which proves our claim. As a consequence of this we have that the homology classes of 1-cycles $q(r,u)$ trivialize $H_{1}(\mathcal{D},\mathbf{t},1)$, and then from proposition \ref{thc} we get the asphericity of $\cP$.
	
Conversely, if $\cP$ is aspherical, then from proposition \ref{thc} and lemma \ref{qr} $H_{1}(\mathcal{D},\mathbf{t},1)$ is generated as an abelian group by the homology classes of 1-cycles $u. q(r,1).u^{-1}$. But from \cite{LDHM2} the homology group $H_{1}(\mathcal{D}, \mathbf{t}, w)$ of the connected component $(\mathcal{D}, \mathbf{t}, w)$ is isomorphic to $H_{1}(\mathcal{D}, \mathbf{t}, 1)$, where the isomorphism $\phi_{w}:H_{1}(\mathcal{D}, \mathbf{t}, 1) \rightarrow H_{1}(\mathcal{D}, \mathbf{t}, w)$ maps each homology class of some 1-cycle $\varsigma$ to the homology class of $\varsigma \cdot w$. This shows that the set of the homology classes of 1-cycles $u.q(r,1).u^{-1}w$ trivialize $H_{1}(\mathcal{D}, \mathbf{t})$. We prove that this set equals to the set of homology classes of 1-cycles $u.q(r,1).v$ where $u,v \in F$. Indeed, for every $u,v \in F$ and every $q(r,1)$, if we take $w=uv$, we get that $u.q(r,1).u^{-1}uv+B_{1}(\mathcal{D}, \mathbf{t})$ is a generator of $H_{1}(\mathcal{D}, \mathbf{t})$. But from lemma \ref{Jm}, $u.q(r,1).u^{-1}uv+B_{1}(\mathcal{D}, \mathbf{t})=u.q(r,1).v+B_{1}(\mathcal{D}, \mathbf{t})$, hence $u.q(r,1).v+B_{1}(\mathcal{D}, \mathbf{t})$ is a generator of $H_{1}(\mathcal{D}, \mathbf{t})$. For the converse, it is obvious that any generator $u.q(r,1).u^{-1}w + B_{1}(\mathcal{D}, \mathbf{t})$ is of the form $u.q(r,1).v + B_{1}(\mathcal{D}, \mathbf{t})$ with $v=u^{-1}w$.
\end{proof}
\begin{remark} \label{IR}
The Squier complex $\mathcal{D}$ of the monoid presentation $\mathcal{M}=\langle\mathbf{x}, \mathbf{x}^{-1}: \mathbf{s} \rangle$ of $G$ has an important property. As the theorem \ref{asphh} shows, the homology trivializers of $H_{1}(\mathcal{D})$ are classes of 1-cycles corresponding to loops from $\mathbf{q} \cup \mathbf{t}$ and each one of them arises from the resolution of a critical pair. Indeed, if $r \in \mathbf{r}$ has the reduced word form $r=x_{1}^{\varepsilon_{1}}\dots x_{n}^{\varepsilon_{n}}$ in $\hat{F}$, then
considering $x_{1}^{\varepsilon_{1}}\dots x_{n}^{\varepsilon_{n}}$ as a word from $F$, we see that the loop $Q(r,1)$ is obtained by resolving the following overlapping pair of edges
$$((1, r,1, r^{-1}), (x_{1}^{\varepsilon_{1}}\dots x_{n-1}^{\varepsilon_{n-1}}, x_{n}^{\varepsilon_{n}}x_{n}^{-\varepsilon_{n}}, 1, x_{n-1}^{-\varepsilon_{n-1}}\dots x_{1}^{-\varepsilon_{1}})).$$
On the other hand, if $t=(1,x^{\varepsilon}x^{-\varepsilon}, 1, x^{\varepsilon}) \circ (x^{\varepsilon}, x^{-\varepsilon}x^{\varepsilon}, 1, 1)^{-1} $ is a loop of $\mathbf{t}$, then it arises from the resolution of the overlapping pair
$$((1,x^{\varepsilon}x^{-\varepsilon}, 1, x^{\varepsilon}), (x^{\varepsilon}, x^{-\varepsilon}x^{\varepsilon}, 1, 1) ).$$
The importance of this remark stands at the fact that when the given presentation $\cP$ is aspherical, then the sequence (\ref{cxkp}) that is associated with the complex $(\mathcal{D},\mathbf{q} \cup \mathbf{t})$ is exact.
\end{remark}

\subsection{A preliminary result} \label{pr}

Let $\mathcal{P}=(\mathbf{x},\mathbf{r})$ be an aspherical group presentation and $\mathcal{P}_{1}=(\mathbf{x},\mathbf{r}_{1})$ a subpresentation of the first where $\mathbf{r}_{1}=\mathbf{r}\setminus \{r_{0}\}$ and $r_{0}\in \mathbf{r}$ is a fixed relation. We denote by $\Upsilon_{1}$, $\mathfrak{U}_{1}$ monoids associated with $\mathcal{P}_{1}$ and by $\mathcal{G}(\Upsilon_{1})$ and $\hat{\mathfrak{U}}_{1}$ their respective groups and let $\tilde{\theta}_{1}$ be the morphism of the crossed module $\mathcal{G}(\Upsilon_{1})$ whose kernel is denoted by $\tilde{\Pi}_{1}$. Also we consider $\hat{\mathfrak{A}}_{1}$ the subgroup of $\hat{\mathfrak{U}}$ generated by all $\mu\sigma(bb^{-1})$ where $b \in Y_{1} \cup Y_{1}^{-1}$. Finally note that the monomorphism $\varphi: \Upsilon_{1} \rightarrow \Upsilon$ induced by the map $\sigma_{1}(a) \rightarrow \sigma(a)$ induces a homomorphism $\hat{\varphi}: \mathcal{G}(\Upsilon_{1} )\rightarrow \mathcal{G}(\Upsilon)$. These data fit into a commutative diagram as depicted below.
\begin{equation*}
	\xymatrix{ \mathcal{G}(\Upsilon_{1}) \ar[rr]^{\hat{\varphi}} \ar[rd]_{\tilde{\theta}_{1}} &&
		\mathcal{G}(\Upsilon) \ar[ld]^-{\tilde{\theta}} \\ & F}
\end{equation*}
The following will be useful in the proof of our main theorem.
\begin{proposition} \label{quasi}
	If $\mathcal{P}$ is aspherical, then $\hat{\varphi}(\tilde{\Pi}_{1})=\hat{\mathfrak{A}}_{1}$.
\end{proposition}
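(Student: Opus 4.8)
I would prove the two inclusions of $\hat{\varphi}(\tilde{\Pi}_{1})=\hat{\mathfrak{A}}_{1}$ separately; the inclusion $\hat{\mathfrak{A}}_{1}\subseteq\hat{\varphi}(\tilde{\Pi}_{1})$ is essentially formal, and $\hat{\varphi}(\tilde{\Pi}_{1})\subseteq\hat{\mathfrak{A}}_{1}$ is the substantive half.

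\textbf{The inclusion $\hat{\mathfrak{A}}_{1}\subseteq\hat{\varphi}(\tilde{\Pi}_{1})$.} Since $\hat{\varphi}$ is induced by $\varphi$ we have $\hat{\varphi}\mu_{1}=\mu\varphi$, and $\varphi$ carries each generator $\sigma_{1}(a)\sigma_{1}(a^{-1})$ of $\mathfrak{U}_{1}$ (with $a\in Y_{1}\cup Y_{1}^{-1}$) to $\sigma(a)\sigma(a^{-1})$; hence $\hat{\varphi}$ sends the generator $\mu_{1}\sigma_{1}(aa^{-1})$ of $\hat{\mathfrak{U}}_{1}$ to the defining generator $\mu\sigma(aa^{-1})$ of $\hat{\mathfrak{A}}_{1}$, so that $\hat{\varphi}(\hat{\mathfrak{U}}_{1})=\hat{\mathfrak{A}}_{1}$. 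On the other hand $\hat{\mathfrak{U}}_{1}$ is a sub-$G$-module of $\tilde{\Pi}_{1}=\text{Ker}\,\tilde{\theta}_{1}$, because $\tilde{\theta}_{1}$ kills every $\mu_{1}\sigma_{1}(aa^{-1})$, exactly as observed in the text for $\mathcal{P}$ itself. Therefore $\hat{\mathfrak{A}}_{1}=\hat{\varphi}(\hat{\mathfrak{U}}_{1})\subseteq\hat{\varphi}(\tilde{\Pi}_{1})$.

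\textbf{The inclusion $\hat{\varphi}(\tilde{\Pi}_{1})\subseteq\hat{\mathfrak{A}}_{1}$.} Let $\tilde{d}_{1}\in\tilde{\Pi}_{1}$. By commutativity of the triangle preceding the statement, $\tilde{\theta}(\hat{\varphi}(\tilde{d}_{1}))=\tilde{\theta}_{1}(\tilde{d}_{1})=1$, so $\hat{\varphi}(\tilde{d}_{1})\in\text{Ker}\,\tilde{\theta}=\tilde{\Pi}$; since $\mathcal{P}$ is aspherical, Theorem \ref{wdom} gives $\tilde{\Pi}=\hat{\mathfrak{U}}$, whence $\hat{\varphi}(\tilde{d}_{1})\in\hat{\mathfrak{U}}$. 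By Proposition \ref{free}, asphericity makes $\hat{\mathfrak{U}}$ a free $G$-module with basis $\{\mu\sigma(rr^{-1}):r\in\mathbf{r}\}$, so I may write $\hat{\varphi}(\tilde{d}_{1})=\sum_{r\in\mathbf{r}}\lambda_{r}\cdot\mu\sigma(rr^{-1})$ uniquely, with $\lambda_{r}\in\mathbb{Z}G$. It suffices to prove $\lambda_{r_{0}}=0$: then $\hat{\varphi}(\tilde{d}_{1})$ lies in the $\mathbb{Z}G$-submodule generated by $\{\mu\sigma(rr^{-1}):r\in\mathbf{r}_{1}\}$, which is contained in $\hat{\mathfrak{A}}_{1}$ because each $^{u}\mu\sigma(rr^{-1})=\mu\sigma(^{u}r(^{u}r)^{-1})$ with $r\in\mathbf{r}_{1}$ is a defining generator of $\hat{\mathfrak{A}}_{1}$. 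To get $\lambda_{r_{0}}=0$, apply the group homomorphism $\hat{g}\colon\mathcal{G}(\Upsilon)\to\mathcal{N}(\mathcal{P})$ from the proof of Proposition \ref{free}, for which $\hat{g}(\mu\sigma((^{u}r)^{\varepsilon}))=u^{\alpha}\cdot r^{\beta}$ and hence $\hat{g}(\mu\sigma(rr^{-1}))=2r^{\beta}$. From the decomposition above, $\hat{g}(\hat{\varphi}(\tilde{d}_{1}))=2\sum_{r\in\mathbf{r}}\lambda_{r}\,r^{\beta}$. But $\hat{\varphi}(\tilde{d}_{1})$, being the image of a word in the generators of $\mathcal{G}(\Upsilon_{1})$, is a word in the symbols $\mu\sigma((^{u}r)^{\varepsilon})^{\pm 1}$ of $\mathcal{G}(\Upsilon)$ with $r\in\mathbf{r}_{1}$ only, so $\hat{g}(\hat{\varphi}(\tilde{d}_{1}))$ lies in $\bigoplus_{r\in\mathbf{r}_{1}}\mathbb{Z}G\,r^{\beta}$. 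Since $\mathcal{N}(\mathcal{P})$ is $\mathbb{Z}G$-free on $\{r^{\beta}:r\in\mathbf{r}\}$ (again by asphericity), comparing $r_{0}^{\beta}$-coordinates yields $2\lambda_{r_{0}}=0$ in $\mathbb{Z}G$; as $\mathbb{Z}G$ is torsion-free as an abelian group, $\lambda_{r_{0}}=0$.

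\textbf{Where the difficulty sits.} The only load-bearing use of asphericity of $\mathcal{P}$ is the chain $\tilde{d}_{1}\in\tilde{\Pi}_{1}\Rightarrow\hat{\varphi}(\tilde{d}_{1})\in\tilde{\Pi}=\hat{\mathfrak{U}}$ (Theorem \ref{wdom}) together with the \emph{unique} expansion of $\hat{\varphi}(\tilde{d}_{1})$ in the free $G$-basis $\{\mu\sigma(rr^{-1})\}$ (Proposition \ref{free}); uniqueness is precisely what absorbs the otherwise inconvenient factor $2$ produced by $\hat{g}(\mu\sigma(rr^{-1}))=2r^{\beta}$, and without asphericity both steps fail. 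Everything else is bookkeeping with the commuting triangle and with the action of $\hat{g}$ on generators; the one point requiring a line of care is that $\hat{\varphi}$ of an element of $\mathcal{G}(\Upsilon_{1})$ involves, inside $\mathcal{G}(\Upsilon)$, only symbols $(^{u}r)^{\varepsilon}$ with $r\in\mathbf{r}_{1}$, which is immediate from the definition of $\hat{\varphi}$.
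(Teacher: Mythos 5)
Your proof is correct. It runs on the same engine as the paper's argument: push $\hat{\varphi}(\tilde{d}_{1})$ into $\hat{\mathfrak{U}}$ via Theorem \ref{wdom}, apply the homomorphism $\hat{g}$ constructed in the proof of Proposition \ref{free}, and use the $\mathbb{Z}G$-freeness of $\mathcal{N}(\mathcal{P})$ on $\{r^{\beta}\}$ to see that no contribution from $r_{0}$ can survive. Where you differ is in how that conclusion is extracted. The paper writes $\hat{\varphi}(\tilde{d})$ as an explicit product of generators of $\hat{\mathfrak{U}}$, isolates the $r_{0}$-part $\mu\sigma(C)\iota\mu\sigma(D)$, compares the $r_{0}^{\beta}$-coefficients to get $k=t$ and $u_{i}^{\alpha}=v_{i}^{\alpha}$ after rearrangement, and then still needs the auxiliary observation that $u^{\alpha}=v^{\alpha}$ forces $\mu\sigma((^{u}s)^{\delta}(^{u}s)^{-\delta})=\mu\sigma((^{v}s)^{\delta}(^{v}s)^{-\delta})$ in order to cancel the $r_{0}$-part inside the group. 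You instead invoke Proposition \ref{free} a second time: since $\hat{\mathfrak{U}}$ is $\mathbb{Z}G$-free on the $\mu\sigma(rr^{-1})$, the coordinate $\lambda_{r_{0}}$ is uniquely determined, and the identity $2\lambda_{r_{0}}=0$ in the torsion-free abelian group $\mathbb{Z}G$ kills it outright. This buys a cleaner argument that avoids both the term-pairing and the auxiliary cancellation lemma, at the cost of leaning on the $G$-equivariance of $\hat{g}$ restricted to $\hat{\mathfrak{U}}$ (which does hold, since it holds on the generators $\mu\sigma(^{u}r(^{u}r)^{-1})$). You also record the reverse inclusion $\hat{\mathfrak{A}}_{1}\subseteq\hat{\varphi}(\tilde{\Pi}_{1})$, which the paper leaves implicit; that half is indeed formal, exactly as you say.
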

\begin{proof}
Let $\tilde{d}=\mu_{1}\sigma_{1}(a_{1}\cdot \cdot \cdot a_{n}) \in \tilde{\Pi}_{1}$ where as before no $a_{i}$ is equal to any $\iota(\mu_{1}\sigma_{1}(^{u}{r})^{\varepsilon})$ and assume that
	\begin{align*}
		\hat{\varphi}(\tilde{d})=&(\mu\sigma(b_{1}b_{1}^{-1})\cdot \cdot \cdot \mu\sigma(b_{s}b_{s}^{-1}))\cdot (\iota(\mu\sigma(b_{s+1}b_{s+1}^{-1}))\cdot \cdot \cdot \iota(\mu\sigma(b_{r}b_{r}^{-1})))\\
		&(\mu\sigma(c_{1}c_{1}^{-1})\cdot \cdot \cdot \mu\sigma(c_{t}c_{t}^{-1}))\cdot (\iota(\mu\sigma(d_{1}d_{1}^{-1}))\cdot \cdot \cdot \iota(\mu\sigma(d_{k}d_{k}^{-1}))),
	\end{align*} 
	where the first half involves elements from $Y_{1}\cup Y_{1}^{-1}$ and the second one is 
	$$\mu\sigma(C)\iota(\mu\sigma(D))$$ 
	with
	$$C=c_{1}c_{1}^{-1} \cdot \cdot \cdot c_{t}c_{t}^{-1} \text{ and } D=d_{1}d_{1}^{-1} \cdot \cdot \cdot d_{k}d_{k}^{-1},$$
	where $C$ and $D$ involve only elements of the form $(^{u}{r_{0}})^{\varepsilon}$ with $\varepsilon=\pm1$. Recalling from above that in $\mathcal{G}(\Upsilon)$ we have 
	\begin{align*}
		\mu \sigma((a_{1} \cdot \cdot \cdot a_{n}) \cdot ((b_{s+1}b_{s+1}^{-1}) \cdot \cdot \cdot (b_{r}b_{r}^{-1})) \cdot ((d_{1}d_{1}^{-1}) \cdot \cdot \cdot (d_{k}d_{k}^{-1})))\\
		= \mu \sigma(((b_{1}b_{1}^{-1}) \cdot \cdot \cdot (b_{s}b_{s}^{-1})) \cdot ((c_{1}c_{1}^{-1}) \cdot \cdot \cdot (c_{t}c_{t}^{-1}))),
	\end{align*}
	we can apply $\hat{g}$ defined in proposition \ref{free} on both sides and get
	\begin{align*}
		g\sigma((a_{1} \cdot \cdot \cdot a_{n}) \cdot ((b_{s+1}b_{s+1}^{-1}) \cdot \cdot \cdot (b_{r}b_{r}^{-1})) \cdot ((d_{1}d_{1}^{-1}) \cdot \cdot \cdot (d_{k}d_{k}^{-1})))\\
		=g\sigma(((b_{1}b_{1}^{-1}) \cdot \cdot \cdot (b_{s}b_{s}^{-1})) \cdot ((c_{1}c_{1}^{-1}) \cdot \cdot \cdot (c_{t}c_{t}^{-1}))).
	\end{align*}
	If we now write each $c_{i}=(^{u_{i}}r_{0})^{\varepsilon_{i}}$ and each $d_{j}=(^{v_{j}}r_{0})^{\delta_{j}}$ where $\varepsilon_{i}$ and $\delta_{j}=\pm1$, while we write each $a_{\ell}=(^{w_{\ell}}r_{\ell})^{\gamma_{\ell}}$ and each $b_{p}=(^{\eta_{p}}\rho_{p})^{\epsilon_{p}}$ where all $r_{\ell}$ and $\rho_{p}$ belong to $\mathbf{r}_{1}$ and $\gamma_{\ell}, \epsilon_{p}=\pm 1$, then the definition of $g$ yields
	\begin{align*}
		(w_{1}^{\alpha}\cdot r_{1}^{\beta}+\cdot \cdot \cdot + w_{n}^{\alpha}\cdot r_{n}^{\beta})+(2\eta_{s+1}^{\alpha} \cdot \rho_{s+1}^{\beta} + \cdot \cdot \cdot +2\eta_{r}^{\alpha} \cdot \rho_{r}^{\beta})+(2v_{1}^{\alpha}+\cdot \cdot \cdot + 2v_{k}^{\alpha})\cdot r_{0}^{\beta}\\
		=(2\eta_{1}^{\alpha} \cdot \rho_{1}^{\beta} + \cdot \cdot \cdot +2\eta_{s}^{\alpha} \cdot \rho_{s}^{\beta})+(2u_{1}^{\alpha}+\cdot \cdot \cdot + 2u_{t}^{\alpha})\cdot r_{0}^{\beta}
	\end{align*}
	The freeness of $\mathcal{N}(\mathcal{P})$ on the set of elements $r^{\beta}$ implies in particular that
	\begin{equation*}
		(2v_{1}^{\alpha}+\cdot \cdot \cdot + 2v_{k}^{\alpha})\cdot r_{0}^{\beta}=(2u_{1}^{\alpha}+\cdot \cdot \cdot + 2u_{t}^{\alpha})\cdot r_{0}^{\beta}
	\end{equation*}
	from which we see that $k=t$, and after a rearrangement of terms $u^{\alpha}_{i}=v^{\alpha}_{i}$ for $i=1,...,k$. The easily verified fact that in $\mathcal{G}(\Upsilon)$, $\mu\sigma(aa^{-1})=\mu\sigma(a^{-1}a)$ and the fact that if $u^{\alpha}=v^{\alpha}$, then for every $s \in \mathbf{r}$, $\mu\sigma((^{v}s)^{\delta}(^{v}s)^{-\delta})=\mu\sigma((^{u}s)^{\delta}(^{u}s)^{-\delta})$, imply easily that
	\begin{equation*}
		\mu\sigma((^{v}r_{0})^{\delta}(^{v}r_{0})^{-\delta})=\mu\sigma((^{u}r_{0})^{\varepsilon}(^{u}r_{0})^{-\varepsilon}).
	\end{equation*}
	If we apply the latter to pairs $(c_{i}, d_{i})$ for which $u^{\alpha}_{i}=v^{\alpha}_{i}$, we get that $\mu\sigma(C)\iota(\mu\sigma(D))=1$ which shows that $\hat{\varphi}(\tilde{d}) \in \hat{\mathfrak{A}}_{1}$.  
\end{proof}

\subsection{The proof}

Throughout this section we assume that $\cP=(\mathbf{x}, \mathbf{r})$ is an aspherical presentation of the trivial group. Consider now a sub presentation $\cP_{1}=(\mathbf{x}, \mathbf{r}_{1})$ of $\cP$ where $\mathbf{r}_{1}= \mathbf{r} \setminus \{r_{0}\}$. For each of the above group presentations, we have a monoid presentation of the same group, namely
$$\mathcal{M}=\langle\mathbf{x}, \mathbf{x}^{-1}: \mathbf{s} \rangle$$ 
is a monoid presentation of the trivial group, where 
$$\mathbf{s}=\{(r^{\varepsilon},1): r \in \mathbf{r}, \varepsilon=\pm 1\} \cup \left\{ (x^{\varepsilon}x^{-\varepsilon},1): x \in \mathbf{x}, \varepsilon=\pm 1\right\},$$
and
$$\mathcal{M}_{1}=\langle\mathbf{x}, \mathbf{x}^{-1}: \mathbf{s}_{1} \rangle$$ 
is a monoid presentation of the group given by $\cP_{1}$, where 
$$\mathbf{s}_{1}=\{(r_{1}^{\varepsilon},1): r_{1} \in \mathbf{r}_{1}, \varepsilon=\pm 1\} \cup \left\{ (x^{\varepsilon}x^{-\varepsilon},1): x \in \mathbf{x}, \varepsilon=\pm 1\right\}.$$
Related to $\mathcal{M}$ we have defined two 2-complexes. The first one is the usual Squier complex $\mathcal{D}$, and the second one is its extension $(\mathcal{D},\mathbf{t})$, and similarly we have two 2-complexes arising from $\mathcal{M}_{1}$, $\mathcal{D}_{1}$ and its extension $(\mathcal{D}_{1},\mathbf{t})$. Further, $(\mathcal{D},\mathbf{t})$ has been extended to a 3-complex $(\mathcal{D}, \mathbf{q} \cup \mathbf{t})$ by adding first 2-cells arising from $Q(r,1)$ and their translates, and than adding all the 3-cells $[e, \sigma]$ or $[\sigma, e]$ for every 2-cell $\sigma$ and every positive edge $e$. We write for short $(\mathcal{D}, \mathbf{q} \cup \mathbf{t})$ by $(\mathcal{D}, \mathbf{p})$ where $\mathbf{p}=\mathbf{q} \cup \mathbf{t}$. Likewise, $(\mathcal{D}_{1},\mathbf{t})$ extends to a 3-complex $(\mathcal{D}_{1}, \mathbf{p}_{1})$ where $\mathbf{p}_{1}=\mathbf{q}_{1} \cup \mathbf{t}$ and $\mathbf{q}_{1}$ is the set of 2-cells arising from $Q(r_{1},1)$ with $r_{1} \in \mathbf{r}_{1}$. But $(\mathcal{D}_{1}, \mathbf{p}_{1})$ is a subcomplex of $(\mathcal{D}, \mathbf{p} )$, therefore we have the following exact sequence of abelian groups
$$\xymatrix@C=30pt{ \dots \ar[r] & H_{2}(\mathcal{D}, \mathbf{p} ) \ar[r] & H_{2}((\mathcal{D}, \mathbf{p}), (\mathcal{D}_{1}, \mathbf{p}_{1})) \ar[r] & H_{1}(\mathcal{D}_{1}, \mathbf{p}_{1}) \ar[r] & H_{1}(\mathcal{D}, \mathbf{p} ) \ar[r] & \dots
}.$$
We know from theorem \ref{asphh} that $H_{1}(\mathcal{D}, \mathbf{p})=0$, so if we prove that $H_{2}((\mathcal{D}, \mathbf{p}), (\mathcal{D}_{1}, \mathbf{p}_{1}))=0$, then the exactness of the above sequence will imply that $H_{1}(\mathcal{D}_{1}, \mathbf{p}_{1})=0$ and we are done. Before we proceed with the proof, we explain how the boundary maps for the corresponding quotient complex are defined. For this we consider the commutative diagram
\begin{equation} \label{def rel}
\xymatrix{C_{3}(\mathcal{D}, \mathbf{p}) \ar[d]_{\mu_{3}} \ar[r]^{\tilde{\partial}_{3}} & C_{2}(\mathcal{D}, \mathbf{p}) \ar[d]_{\mu_{2}} \ar[r]^{\tilde{\partial}_{2}} & C_{1}(\mathcal{D}, \mathbf{p}) \ar[d]_{\mu_{1}}\\
	C_{3}(\mathcal{D}, \mathbf{p})/C_{3} (\mathcal{D}_{1}, \mathbf{p}_{1}) \ar[r]^{\hat{\partial}_{3}} & C_{2}(\mathcal{D}, \mathbf{p})/C_{2} (\mathcal{D}_{1}, \mathbf{p}_{1})  \ar[r]^{\hat{\partial}_{2}} & C_{1}(\mathcal{D}, \mathbf{p})/C_{1} (\mathcal{D}_{1}, \mathbf{p}_{1})}
\end{equation}
where $\mu_{i}$ for $i=1,2,3$ are the canonical epimorphisms. Then, for $i=2,3$ and for every $\sigma \in C_{i}(\mathcal{D}, \mathbf{p})$ we have
$$\hat{\partial}_{i}(\mu_{i}\sigma)= \mu_{i-1}\tilde{\partial}_{i}(\sigma).$$
We write $Im (\hat{\partial}_{3})=B_{2}((\mathcal{D}, \mathbf{p}),(\mathcal{D}_{1}, \mathbf{p}_{1}))$, and similarly $Im (\hat{\partial}_{2})=B_{1}((\mathcal{D}, \mathbf{p}),(\mathcal{D}_{1}, \mathbf{p}_{1}))$. Also we let $Z_{2}((\mathcal{D}, \mathbf{p}),(\mathcal{D}_{1}, \mathbf{p}_{1}))=Ker(\hat{\partial}_{2})$ and then $$H_{2}((\mathcal{D}, \mathbf{p}),(\mathcal{D}_{1}, \mathbf{p}_{1}))=Z_{2}((\mathcal{D}, \mathbf{p}),(\mathcal{D}_{1}, \mathbf{p}_{1}))/B_{2}((\mathcal{D}, \mathbf{p}),(\mathcal{D}_{1}, \mathbf{p}_{1})).$$ 
We note that
$$C_{2}(\mathcal{D}, \mathbf{p})/C_{2} (\mathcal{D}_{1}, \mathbf{p}_{1}) \cong \mu_{2}(C_{2}(\mathcal{D})) \oplus C_{2}^{\mathbf{q}_{0}},$$
where $\mu_{2}(C_{2}(\mathcal{D}))$ is the free abelian group generated by all 2-cells $[e,f]$ where at least one of the edges $e$ or $f$ arises from $r_{0}$, and $C_{2}^{\mathbf{q}_{0}}$ is the free abelian group on 2-cells $u. \mathbf{q}_{0}.v$ where $\mathbf{q}_{0}$ is the 2-cell attached along $Q(r_{0},1)$. We can thus regard $Z_{2}((\mathcal{D}, \mathbf{p}),(\mathcal{D}_{1}, \mathbf{p}_{1}))$ as a subgroup of $ \mu_{2}(C_{2}(\mathcal{D})) \oplus C_{2}^{\mathbf{q}_{0}}$. Now we let
$$\varphi_{rel}: \mu_{2}(C_{2}(\mathcal{D})) \oplus C_{2}^{\mathbf{q}_{0}} \rightarrow \mathbb{Z}G. \mathbf{q}_{0}. \mathbb{Z}G$$
be the $(\mathbb{Z}F,\mathbb{Z}F)$-homomorphism defined by mapping $\mu_{2}(C_{2}(\mathcal{D}))$ to 0, and every 2-cell $u. \mathbf{q}_{0}. v$ to $\bar{u}.\mathbf{q}_{0}. \bar{v}$. Denote the kernel of $\varphi_{rel}$ by $K_{rel}^{\mathbf{q}_{0}}$. By the same argument as that of \cite{SES} we see that
$$K_{rel}^{\mathbf{q}_{0}}= \mu_{2}(C_{2}(\mathcal{D}))+ J. \mathbf{q}_{0}.\mathbb{Z}F+\mathbb{Z}F.\mathbf{q}_{0}.J.$$
Latter we will make use of the fact that $K_{rel}^{\mathbf{q}_{0}}$ can be regarded as a sub group of $K^{\mathbf{p}}$.

Next we show that $B_{2}((\mathcal{D}, \mathbf{p}), (\mathcal{D}_{1}, \mathbf{p}_{1})) \subseteq K_{rel}^{\mathbf{q}_{0}}$ and that the restriction of $\hat{\partial}_{2}$ on $K_{rel}^{\mathbf{q}_{0}}$ sends $K_{rel}^{\mathbf{q}_{0}}$ onto the subgroup $B_{1}(\mathcal{D}, \mathcal{D}_{1})$ of $ C_{1}(\mathcal{D}, \mathbf{p})/ C_{1}(\mathcal{D}_{1}, \mathbf{p}_{1})$ defined by
$$B_{1}(\mathcal{D}, \mathcal{D}_{1})=\left\{\beta+ C_{1}(\mathcal{D}_{1})| \beta \in \tilde{\partial}_{2}(C_{2}(\mathcal{D})) \right\}.$$ 
To see that $B_{2}((\mathcal{D}, \mathbf{p}), (\mathcal{D}_{1}, \mathbf{p}_{1})) \subseteq K_{rel}^{\mathbf{q}_{0}}$, we must prove that for every 3-cell $[f,\sigma]$ or $[\sigma, f]$, $$\hat{\partial}_{3}\left([f, \sigma]+C_{3}(\mathcal{D}_{1}, \mathbf{p}_{1})\right) \in K_{rel}^{\mathbf{q}_{0}}$$
and similarly,
$$\hat{\partial}_{3}\left([\sigma,f]+C_{3}(\mathcal{D}_{1}, \mathbf{p}_{1})\right) \in K_{rel}^{\mathbf{q}_{0}}.$$
We prove the second for convenience. Let $[\sigma, f] \notin C_{3}(\mathcal{D}_{1}, \mathbf{p}_{1})$. If $\sigma \in F.\mathbf{q}_{0}.F$ or $f$ arises from $r_{0}$, then clearly
\begin{align*}
	\hat{\partial}_{3}\left([\sigma,f]+C_{3}(\mathcal{D}_{1}, \mathbf{p}_{1})\right)&=\left(\sigma. (\iota f- \tau f)-\sum_{i} \varepsilon_{i}[e_{i},f]\right)+C_{2}(\mathcal{D}_{1}, \mathbf{p}_{1})\in K_{rel}^{\mathbf{q}_{0}}.
\end{align*}
Otherwise, if $\sigma \notin \mathbf{q}_{0}$ and $f$ arises from $\mathbf{r}_{1}$, then $\sigma=[g,h]$ where at least $g$ or $h$ arises from $r_{0}$. Again we see that $\hat{\partial}_{3}\left([\sigma,f]+C_{3}(\mathcal{D}_{1}, \mathbf{p}_{1})\right) \in K_{rel}^{\mathbf{q}_{0}}$.

Next we prove that the restriction of $\hat{\partial}_{2}$ on $K_{rel}^{\mathbf{q}_{0}}$ sends $K_{rel}^{\mathbf{q}_{0}}$ onto $B_{1}(\mathcal{D}, \mathcal{D}_{1})$. Indeed, since for every $(\iota f  - \tau f). \sigma_{0} \in J. \mathbf{q}_{0}.\mathbb{Z}F$ 
\begin{equation*}
	(\iota f  - \tau f). \sigma_{0}= \tilde{\partial}_{3}[f, \sigma_{0}]-\sum_{i}\varepsilon_{i}[f, e_{i}],
\end{equation*}
then we can derive that
\begin{align*}
	\hat{\partial}_{2}\left((\iota f  - \tau f). \sigma_{0}\right)&=-\hat{\partial}_{2}\left(\sum_{i}\varepsilon_{i}[f, e_{i}]\right)\\
	&=-\sum_{i}\varepsilon_{i}\tilde{\partial}_{2}[f, e_{i}]+C_{1}(\mathcal{D}_{1}) \in B_{1}(\mathcal{D}, \mathcal{D}_{1}).
\end{align*}
In a symmetric way one can show that $\hat{\partial}_{2}\left(\sigma_{0}.(\iota f  - \tau f)\right) \in B_{1}(\mathcal{D}, \mathcal{D}_{1})$. Finally, if $[e,f] \in \mu_{2}(C_{2}(\mathcal{D}))$, then
$$\hat{\partial}_{2}[e,f]=\tilde{\partial_{2}}[e,f]+C_{1}(\mathcal{D}_{1})\in B_{1}(\mathcal{D}, \mathcal{D}_{1}).$$
This also shows that $\hat{\partial}_{2}$ is onto.

Therefore we have the complex
\begin{equation} \label{c-rel}
\xymatrix{0 \ar[r] & B_{2}((\mathcal{D}, \mathbf{p}), (\mathcal{D}_{1}, \mathbf{p}_{1})) \ar[r]^-{incl.} & K_{rel}^{\mathbf{q}_{0}} \ar[r]^-{\hat{\partial}_{2}} & B_{1}(\mathcal{D}, \mathcal{D}_{1}) \ar[r] & 0.
}
\end{equation}
which is exact on the left and on the right. 
\begin{lemma} \label{c-rel-ext}
The complex (\ref{c-rel}) is exact.
\end{lemma}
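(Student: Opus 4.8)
The plan is to prove Lemma \ref{c-rel-ext} by carrying out, in the relative setting, the argument of Proposition \ref{p14}. Since the complex (\ref{c-rel}) is already known to be exact on the left and on the right, only exactness in the middle is at stake: one must show that every $\xi\in K_{rel}^{\mathbf{q}_{0}}$ with $\hat\partial_{2}(\xi)=0$ lies in $\operatorname{Im}(\hat\partial_{3})$. Viewing $K_{rel}^{\mathbf{q}_{0}}$ as a subgroup of $K^{\mathbf{p}}$ and recalling that $\hat\partial_{2}(\mu_{2}\sigma)=\mu_{1}\tilde\partial_{2}(\sigma)$ while $C_{1}(\mathcal{D}_{1},\mathbf{p}_{1})=C_{1}(\mathcal{D}_{1})$, the hypothesis becomes $\tilde\partial_{2}(\xi)\in C_{1}(\mathcal{D}_{1})$, and the goal is to exhibit a $3$-chain $w\in C_{3}(\mathcal{D},\mathbf{p})$ with $\xi-\tilde\partial_{3}(w)\in C_{2}(\mathcal{D}_{1},\mathbf{p}_{1})$.

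First I would settle the case in which the monoid presentation $\mathcal{M}$ is complete and the trivializer $\mathbf{p}=\mathbf{q}\cup\mathbf{t}$ is obtained from resolutions of critical pairs, which is exactly the situation guaranteed by Remark \ref{IR} when $\cP$ is aspherical. In that case factoring out $(\mathcal{D}_{1},\mathbf{p}_{1})$ collapses the ambient $1$-chain module $C_{1}(\mathcal{D})/C_{1}(\mathcal{D}_{1})$ onto the free $(\mathbb{Z}F,\mathbb{Z}F)$-bimodule on the single relator $r_{0}$, while $\mu_{2}(C_{2}(\mathcal{D}))\oplus C_{2}^{\mathbf{q}_{0}}$ supplies the $2$-chains and the resolution cells $\mathbf{q}_{0}$ are at hand because a complete system resolves every critical pair. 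Under these identifications the segment $C_{3}(\mathcal{D},\mathbf{p})/C_{3}(\mathcal{D}_{1},\mathbf{p}_{1})\xrightarrow{\hat\partial_{3}}K_{rel}^{\mathbf{q}_{0}}\xrightarrow{\hat\partial_{2}}B_{1}(\mathcal{D},\mathcal{D}_{1})$ becomes, modulo the bookkeeping carried by the trivial edges $\mathbf{t}$, an instance of the sequence $C_{2}\xrightarrow{\partial_{2}}J.R.\mathbb{Z}\Sigma^{\ast}+\mathbb{Z}\Sigma^{\ast}.R.J\xrightarrow{\partial_{1}}J^{2}\to 0$ of Theorem 6.6 of \cite{OK2002}, applied to $\mathcal{M}$ with the relator module restricted to $r_{0}$; its exactness in the middle, $\operatorname{Ker}\partial_{1}\cap(J.R.\mathbb{Z}\Sigma^{\ast}+\mathbb{Z}\Sigma^{\ast}.R.J)=\operatorname{Im}\partial_{2}$, then yields $\operatorname{Ker}\hat\partial_{2}=\operatorname{Im}\hat\partial_{3}$ in the complete case. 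This is the point where Theorem 6.6 of \cite{OK2002} is indispensable.

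I would then transfer the complete case to an arbitrary aspherical $\cP$ by the device of Section \ref{shp}. Run Knuth--Bendix so as to complete $\mathcal{M}_{1}$ first and only afterwards adjoin and complete around the rule $r_{0}\to 1$; this produces a complete $\mathcal{M}^{\infty}$ with $(\mathcal{D}_{1},\mathbf{p}_{1})\subseteq(\mathcal{D}_{1}^{\infty},\mathbf{p}_{1}^{\infty})\subseteq(\mathcal{D}^{\infty},\mathbf{p}^{\infty})$ and with the feature that every edge of $\mathcal{D}^{\infty}$ not involving $r_{0}$ already lies in $\mathcal{D}_{1}^{\infty}$. Then, following Lemmas \ref{rho} and \ref{chm} and Proposition \ref{p14}, build retractions $\hat\rho_{1},\hat\rho_{2},\hat\rho_{3}$ of $(\mathcal{D}^{\infty},\mathbf{p}^{\infty})$ onto $(\mathcal{D},\mathbf{p})$, choosing the connecting path $\rho(e)$ of each exterior edge $e$ inside $(\mathcal{D}_{1},\mathbf{p}_{1})$ whenever $e$ does not involve $r_{0}$. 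With this choice each $\hat\rho_{i}$ carries $C_{i}(\mathcal{D}_{1}^{\infty},\mathbf{p}_{1}^{\infty})$ into $C_{i}(\mathcal{D}_{1},\mathbf{p}_{1})$, hence descends to the quotient complexes of (\ref{def rel}) and still commutes with $\hat\partial_{2},\hat\partial_{3}$ by the argument of Lemma \ref{chm}. Given $\xi\in\operatorname{Ker}\hat\partial_{2}$, the complete case produces a relative $3$-chain $w$ whose relative boundary is the class of $\xi$; applying the descended $\hat\rho_{2}$, which fixes that class, gives $\xi\equiv\hat\partial_{3}(\hat\rho_{3}(w))$ modulo $C_{2}(\mathcal{D}_{1},\mathbf{p}_{1})$, that is $\xi\in\operatorname{Im}(\hat\partial_{3})$, as required.

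The hard part is the base case: making precise the isomorphism between the relative complex (\ref{c-rel}) of a complete presentation and the exact sequence of Theorem 6.6 of \cite{OK2002}, above all checking that passing to the quotient by $(\mathcal{D}_{1},\mathbf{p}_{1})$ genuinely reduces the ambient bimodule to the free one on $r_{0}$ and accounting exactly for the contribution of the trivial-letter cells $\mathbf{t}$ and of the resolution cells $\mathbf{q}_{0}$. A secondary technical point is organizing the Knuth--Bendix completion together with the choices of retraction paths so that the subcomplex $(\mathcal{D}_{1},\mathbf{p}_{1})$ --- equivalently, the collection of ``$r_{0}$-free'' cells --- is preserved, since that compatibility is exactly what makes the retractions descend to (\ref{def rel}).
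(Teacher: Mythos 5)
Your overall strategy --- redoing the whole two-stage argument of Section \ref{shp} (complete case plus Knuth--Bendix transfer via retractions) in the relative setting --- is not the route the paper takes, and as written it has two genuine gaps. First, the ``base case'' you defer to is not an instance of Theorem 6.6 of \cite{OK2002}: that theorem is stated for the full rule set of a presentation, and there is no ``relator module restricted to $r_{0}$'' version available to invoke. The relative chain group $\mu_{2}(C_{2}(\mathcal{D}))\oplus C_{2}^{\mathbf{q}_{0}}$ contains the mixed squares $[e,f]$ in which exactly one of $e,f$ is an $r_{0}$-edge, and the target $B_{1}(\mathcal{D},\mathcal{D}_{1})$ is the image of \emph{all} squares modulo $C_{1}(\mathcal{D}_{1})$, not a group of the form $J^{2}$; the identification you describe would therefore itself need a proof at least as substantial as the lemma. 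Second, the transfer step relies on arranging Knuth--Bendix so that ``every edge of $\mathcal{D}^{\infty}$ not involving $r_{0}$ already lies in $\mathcal{D}_{1}^{\infty}$''; this is unjustified, because resolving a critical pair that overlaps $r_{0}$ with a rule of $\mathbf{r}_{1}$ produces new rules attributable to neither $r_{0}$ nor $\mathbf{r}_{1}$, so the dichotomy ``$r_{0}$-cell versus $r_{0}$-free cell'' does not survive completion and the retractions need not descend to the quotient complexes of (\ref{def rel}).

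The paper avoids both problems by never running the completion machinery relatively. Given $\xi=\sum_{i}z_{i}\mu_{2}(\sigma_{i})\in \operatorname{Ker}\hat{\partial}_{2}$, it lifts $\xi$ to $K^{\mathbf{p}}$, notes (as you do) that $\tilde{\partial}_{2}\left(\sum_{i}z_{i}\sigma_{i}\right)$ is a $1$-cycle lying in $J.\mathbf{s}_{1}.\mathbb{Z}F+\mathbb{Z}F.\mathbf{s}_{1}.J$, and applies Theorem 6.6 of \cite{OK2002} to the \emph{subpresentation} $\mathcal{M}_{1}$ with its full rule set $\mathbf{s}_{1}$ --- so the theorem applies as stated --- to produce a $2$-chain $\sum_{j}k_{j}\beta_{j}\in C_{2}(\mathcal{D}_{1})$ with the same boundary. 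Subtracting it yields an absolute $2$-cycle $\tilde{\xi}\in K^{\mathbf{p}}$ with $\mu_{2}(\tilde{\xi})=\xi$, and the already established exactness of (\ref{cxkp}) (Proposition \ref{p14} together with Remark \ref{IR}) gives $\tilde{\xi}=\tilde{\partial}_{3}(w)$, whence $\xi=\hat{\partial}_{3}\mu_{3}(w)$. The idea missing from your proposal is this correction of $\xi$ by a $2$-chain of $C_{2}(\mathcal{D}_{1})$, which reduces relative exactness to the absolute statement you already have in hand.
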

\begin{proof}
For this we consider the commutative diagram
$$\xymatrix@C=35pt{0 \ar[r] & B_{2}(\mathcal{D}, \mathbf{p}) \ar[d]_{\mu_{2}} \ar[r]^{incl.} & K^{\mathbf{p}} \ar[d]_{\mu_{2}} \ar[r]^{\tilde{\partial}_{2}} & B_{1}(\mathcal{D}) \ar[d]_{\mu_{1}} \ar[r] & 0 \\
	0 \ar[r] & B_{2}((\mathcal{D}, \mathbf{p}), (\mathcal{D}_{1}, \mathbf{p}_{1})) \ar[r]^-{incl.} & K_{rel}^{\mathbf{q}_{0} \ar[r]^-{\hat{\partial}_{2}}} & B_{1}(\mathcal{D}, \mathcal{D}_{1}) \ar[r] & 0}$$
The top row is exact from proposition \ref{p14} and from remark \ref{IR}, and $\mu_{1},\mu_{2}$ are the restrictions of the  epimorphisms of (\ref{def rel}). Let $\xi =\sum_{i}z_{i}\mu_{2}(\sigma_{i})\in Ker \hat{\partial}_{2}$. We recall that $\xi$ can be regarded as an element of $K^{\mathbf{p}}$ with no terms arising from $\mathbf{t}$ or square 2-cells $[e,f]$ with both $e$ and $f$ in $\mathcal{D}_{1}$. Further we have that
\begin{align*}
	0&=\hat{\partial}_{2}\left(\sum_{i}z_{i}\mu_{2}(\sigma_{i})\right)=\sum_{i}z_{i}\hat{\partial}_{2} \mu_{2}(\sigma_{i})\\
	&=\sum_{i}z_{i}\mu_{1}\tilde{\partial}_{2}(\sigma_{i})=\mu_{1}\tilde{\partial}_{2}\left(\sum_{i}z_{i}\sigma_{i}\right),
\end{align*}
which implies that $\tilde{\partial}_{2}\left(\sum_{i}z_{i}\sigma_{i}\right) \in C_{1}(\mathcal{D}_{1})$, and so $\tilde{\partial}_{2}\left(\sum_{i}z_{i}\sigma_{i}\right)$ is a 1-cycle in $Z_{1}(\mathcal{D}_{1})$. It follows that $$\tilde{\partial}_{2}\left(\sum_{i}z_{i}\sigma_{i}\right) \in Ker \tilde{\partial}_{1} \cap (J. \mathbf{s}_{1}. \mathbb{Z}F+ \mathbb{Z}F. \mathbf{s}_{1}. J).$$ 
We note that each term from $J. \mathbf{s}_{1}. \mathbb{Z}F+ \mathbb{Z}F. \mathbf{s}_{1}. J$ that is represented in $\tilde{\partial}_{2}\left(\sum_{i}z_{i}\sigma_{i}\right)$ arises either from a 2-cell $[e,f]$ where at least one of $e$ or $f$ is a positive edges that belongs to $\mathcal{D}_{1}$, or arises from an element of the form $j.\mathbf{q}_{0}. v$ or $u. \mathbf{q}_{0}. j$ with $u, v \in F$ and $j \in J$. Theorem 6.6 of \cite{OK2002} implies that there is a 2-chain $\sum_{j}k_{j} \beta_{j} \in C_{2}(\mathcal{D}_{1})$ such that $\tilde{\partial}_{2}\left(\sum_{i}z_{i}\sigma_{i}\right)=\tilde{\partial}_{2}\left(\sum_{j}k_{j} \beta_{j}\right)$ and then we have the 2-cycle $\tilde{\xi}=\sum_{i}z_{i}\sigma_{i}-\sum_{j}k_{j} \beta_{j}$ in $K^{\mathbf{p}}$. It follows that $\tilde{\xi}$ is a 2-boundary since the top row is exact, and has the property that
\begin{align*}
	\mu_{2}(\tilde{\xi})&=\mu_{2}\left(\sum_{i}z_{i}\sigma_{i}\right)-\mu_{2}\left(\sum_{j}k_{j} \beta_{j}\right)\\
	&=\mu_{2}\left(\sum_{i}z_{i}\sigma_{i}\right) && \text{(since each $\beta_{j} \in C_{2}(\mathcal{D}_{1})$)}\\
	&=\sum_{i}z_{i}\mu_{2}(\sigma_{i})\\
	&=\xi,
\end{align*}
hence for some $w \in C_{3}(\mathcal{D}, \mathbf{p})$ we have that
$$\xi=\mu_{2}(\tilde{\xi})=\mu_{2}(\tilde{\partial}_{3}(w))=\hat{\partial}_{3}\mu_{3}(w).$$
This proves that $\xi$ is a relative 2-boundary and as a consequence the exactness of the bottom row.
\end{proof}

Further we note that $B_{1}(\mathcal{D}, \mathcal{D}_{1})$ embeds in $Im (\hat{\partial}_{2})$. Indeed, any element $\tilde{\partial}_{2}(\xi)+ C_{1}(\mathcal{D}_{1})$ of $B_{1}(\mathcal{D}, \mathcal{D}_{1})$ where $\xi$ is a 2-chain from $C_{2}(\mathcal{D})$ is in $Im (\hat{\partial}_{2})$ since $C_{2}(\mathcal{D}) \leq C_{2}(\mathcal{D}, \mathbf{p})$ and $C_{1}(\mathcal{D}_{1}, \mathbf{p}_{1})=C_{1}(\mathcal{D}_{1})$.

Finally, consider the commutative diagram
$$\xymatrix{0 \ar[r] & B_{2}((\mathcal{D}, \mathbf{p}), (\mathcal{D}_{1}, \mathbf{p}_{1})) \ar[d]_{\iota} \ar[r]^-{incl.} & K_{rel}^{\mathbf{q}_{0}} \ar[d]_{\iota} \ar[r]^-{\hat{\partial}_{2}} & B_{1}(\mathcal{D}, \mathcal{D}_{1}) \ar[d]_{\iota} \ar[r] & 0 \\
	0 \ar[r] & 	Z_{2}((\mathcal{D}, \mathbf{p}), (\mathcal{D}_{1}, \mathbf{p}_{1})) \ar[r]^{\iota} & \mu_{2}(C_{2}(\mathcal{D})) \oplus C_{2}^{\mathbf{q}_{0}} \ar[r]^-{\hat{\partial}_{2}} & Im (\hat{\partial}_{2})  \ar[r] & 0 }$$
where the top row is exact from lemma \ref{c-rel-ext}, and the bottom one is also exact where $Im(\hat{\partial}_{2}) \leq C_{1}(\mathcal{D}, \mathbf{p})/C_{1}(\mathcal{D}_{1}, \mathbf{p}_{1})$. From the Snake Lemma we get the exact sequence
\begin{equation} \label{mainses}
	\xymatrix{0 \ar[r] & H_{2}((\mathcal{D}, \mathbf{p}), (\mathcal{D}_{1}, \mathbf{p}_{1})) \ar[r] & \mathbb{Z}G.\mathbf{q}_{0}.  \mathbb{Z}G \ar[r]^-{\nu} & Im(\hat{\partial}_{2})/  B_{1}(\mathcal{D}, \mathcal{D}_{1}) \ar[r] & 0,}
\end{equation}
where $\nu(\mathbf{q}_{0})=\hat{\partial}_{2}([1,\mathbf{q}_{0},1])+B_{1}(\mathcal{D}, \mathcal{D}_{1})$. Since $G$ is the trivial group, we have that for every $[u, \mathbf{q}_{0}, v]$,
\begin{equation} \label{fgen}
	\hat{\partial}_{2}([u, \mathbf{q}_{0}, v])+ B_{1}(\mathcal{D}, \mathcal{D}_{1})=\hat{\partial}_{2}([1, \mathbf{q}_{0}, 1])+ B_{1}(\mathcal{D}, \mathcal{D}_{1}).
\end{equation}
This follows easily if we prove that for every positive edge $e$, and $v \in F$ we have that
$$\hat{\partial}_{2}([\iota e, \mathbf{q}_{0}, v])+ B_{1}(\mathcal{D}, \mathcal{D}_{1})=\hat{\partial}_{2}([\tau e, \mathbf{q}_{0}, v])+ B_{1}(\mathcal{D}, \mathcal{D}_{1}),$$
and similarly, for every positive edge $f$ and $u \in F$,
$$\hat{\partial}_{2}([u, \mathbf{q}_{0}, \iota f])+ B_{1}(\mathcal{D}, \mathcal{D}_{1})=\hat{\partial}_{2}([u, \mathbf{q}_{0}, \tau f])+ B_{1}(\mathcal{D}, \mathcal{D}_{1}).$$
We prove the first claim for convenience. Since 
$$(\iota e- \tau e). \mathbf{q}_{0}.v+ C_{2}(\mathcal{D}_{1}, \mathbf{p}_{1})= \left(\tilde{\partial}_{3}[e,\mathbf{q}_{0}]-\sum_{i}\varepsilon_{i}[e,e_{i}]\right)+C_{2}(\mathcal{D}_{1}, \mathbf{p}_{1}),$$
where $\partial \mathbf{q}_{0}= e_{1}^{\varepsilon_{1}}\dots e_{n}^{\varepsilon_{n}}$, then
$$\tilde{\partial}_{2}((\iota e- \tau e). \mathbf{q}_{0}.v)+C_{1}(\mathcal{D}_{1})=-\tilde{\partial}_{2}\left(\sum_{i}\varepsilon_{i}[e,e_{i}]\right)+C_{1}(\mathcal{D}_{1}).$$
But
$$\tilde{\partial}_{2}\left(\sum_{i}\varepsilon_{i}[e,e_{i}]\right)+C_{1}(\mathcal{D}_{1}) \in B_{1}(\mathcal{D}, \mathcal{D}_{1}),$$
consequently
\begin{align*}
	\left(\hat{\partial}_{2}([\iota e, \mathbf{q}_{0}, v])-\hat{\partial}_{2}([\tau e, \mathbf{q}_{0}, v])\right)+ B_{1}(\mathcal{D}, \mathcal{D}_{1})&=\left(\tilde{\partial}_{2}((\iota e- \tau e). \mathbf{q}_{0}.v)+C_{1}(\mathcal{D}_{1})\right) + B_{1}(\mathcal{D}, \mathcal{D}_{1})\\
	&=-\left(\tilde{\partial}_{2}\left(\sum_{i}\varepsilon_{i}[e,e_{i}]\right)+C_{1}(\mathcal{D}_{1})\right)+B_{1}(\mathcal{D}, \mathcal{D}_{1})\\
	&=B_{1}(\mathcal{D}, \mathcal{D}_{1}),
\end{align*}
which proves the first claim. 

An obvious consequence of (\ref{fgen}) is that $Im(\hat{\partial}_{2})/  B_{1}(\mathcal{D}, \mathcal{D}_{1})$ is a cyclic group with generator $\hat{\partial}_{2}([1, \mathbf{q}_{0}, 1])+ B_{1}(\mathcal{D}, \mathcal{D}_{1})$. The key to proving our main theorem is that $Im(\hat{\partial}_{2})/  B_{1}(\mathcal{D}, \mathcal{D}_{1})$ is infinite cyclic. Before that, we need to do some preparatory work.

If we let $G_{1}$ be the group given by $\cP_{1}$, then for every $g \in G_{1}$, we let $(\mathcal{D}_{1}, \mathbf{t}, g)$ be the connected component of $(\mathcal{D}_{1}, \mathbf{t})$ corresponding to $g$, and let $H_{1}(\mathcal{D}_{1}, \mathbf{t}, g)$ be the corresponding homology group. The homology group $H_{1}(\mathcal{D}_{1}, \mathbf{t})$ decomposes as a direct sum
$$H_{1}(\mathcal{D}_{1}, \mathbf{t})=\oplus_{g \in G_{1}}H_{1}(\mathcal{D}_{1}, \mathbf{t}, g).$$
Any 1-cycle $\varsigma$ now decomposes uniquely as
$$\varsigma= \varsigma_{g_{1}}+\dots + \varsigma_{g_{n}}$$
where $\varsigma_{g_{i}} \in Z_{1}(\mathcal{D}_{1}, \mathbf{t}, g_{i})$, and $\varsigma+B_{1}(\mathcal{D}_{1}, \mathbf{t})$ writes uniquely as
$$\varsigma+B_{1}(\mathcal{D}_{1}, \mathbf{t})=(\varsigma_{g_{1}}+B_{1}(\mathcal{D}_{1}, \mathbf{t}, g_{1}))+ \dots + (\varsigma_{g_{n}}+B_{1}(\mathcal{D}_{1}, \mathbf{t}, g_{n})).$$
From \cite{LDHM2} we know that each $H_{1}(\mathcal{D}_{1}, \mathbf{t}, g_{i})$ is isomorphic to $H_{1}(\mathcal{D}_{1}, \mathbf{t}, 1)$ where the isomorphism $$\theta_{u_{i}}: H_{1}(\mathcal{D}_{1}, \mathbf{t}, g_{i}) \rightarrow H_{1}(\mathcal{D}_{1}, \mathbf{t}, 1)$$
is defined by
$$\varsigma_{g_{i}}+B_{1}(\mathcal{D}_{1}, \mathbf{t}, g_{i}) \mapsto \varsigma_{g_{i}}\cdot u_{i}^{-1}+B_{1}(\mathcal{D}_{1}, \mathbf{t},1)$$
where $u_{i}$ is any vertex in $(\mathcal{D}_{1}, \mathbf{t}, g_{i})$.

We let $\psi_{1}: H_{1}(\mathcal{D}_{1}, \mathbf{t},1) \rightarrow \tilde{\Pi}_{1}$ and $\eta: \tilde{\Pi} \rightarrow H_{1}(\mathcal{D}, \mathbf{t})$ be the isomorphism of \cite{LDHM2}. With these notations the following holds true.
\begin{lemma} \label{inv}
	For every $\varsigma\in Z_{1}(\mathcal{D}_{1}, \mathbf{t}, 1)$, $\hat{\varphi}\psi_{1}(\varsigma+ B_{1}(\mathcal{D}_{1}, \mathbf{t}, 1))=\psi(\varsigma+ B_{1}(\mathcal{D}, \mathbf{t}))$.
\end{lemma}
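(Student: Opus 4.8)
The plan is to verify the claimed identity by reducing it to a statement about the concrete maps $\psi_0$ (from the Pride-complex theory of \cite{LDHM2}) and the inclusions $\varphi:\Upsilon_1\hookrightarrow\Upsilon$, $\hat\varphi:\mathcal G(\Upsilon_1)\to\mathcal G(\Upsilon)$. First I would recall that under the isomorphisms of \eqref{ki} (applied to both $\mathcal P$ and $\mathcal P_1$) the map $\psi$ is induced by the edge-level assignment $\psi_0$ sending a trivial edge to $0$ and an edge $(u,r,\varepsilon,v)$ to $(^{[u]}r)^\varepsilon$, extended additively over paths and then factored through homotopy; and similarly $\psi_1$ is the analogous map built from the sub-rewriting-system $\mathcal M_1$, whose edges $(u,r_1,\varepsilon,v)$ involve only $r_1\in\mathbf r_1$. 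The point is that $\psi_1$ and $\psi$ agree on the common edges: for an edge $(u,r_1,\varepsilon,v)$ of $(\mathcal D_1,\mathbf t)$, $\psi_{1,0}$ gives $(^{[u]}r_1)^\varepsilon\in\mathcal G(\Upsilon_1)$, while applying $\hat\varphi$ to this produces exactly $(^{[u]}r_1)^\varepsilon\in\mathcal G(\Upsilon)$, which is $\psi_0$ of the same edge viewed in $(\mathcal D,\mathbf t)$, since $\hat\varphi$ is induced by $\sigma_1(a)\mapsto\sigma(a)$. The trivial edges $\mathbb A,\mathbb B$ of $\mathbf t$ are common to both complexes and are sent to $0$ by both $\psi_{1,0}$ and $\psi_0$, so they cause no discrepancy.

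The second step is to move from edges to $1$-cycles and their homology classes. Given $\varsigma\in Z_1(\mathcal D_1,\mathbf t,1)$, write it as an integer combination of (inverses of) edges of $(\mathcal D_1,\mathbf t)$; because $\mathcal D_1\subseteq\mathcal D$, the very same chain is a $1$-cycle in $Z_1(\mathcal D,\mathbf t,1)$, so both sides of the claimed equation make sense. Applying $\psi_1$ to $\varsigma+B_1(\mathcal D_1,\mathbf t,1)$ and then $\hat\varphi$, versus applying $\psi$ to $\varsigma+B_1(\mathcal D,\mathbf t)$: by the edge-level agreement of the previous paragraph, the two chain-level images in $\mathcal G(\Upsilon)$ coincide before passing to classes. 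The only thing to check is that this survives the quotient, i.e.\ that $\psi_1$ kills $B_1(\mathcal D_1,\mathbf t,1)$ and $\psi$ kills $B_1(\mathcal D,\mathbf t)$ — but this is built into the construction in \cite{LDHM2}: $\psi_0$ sends the boundary of every defining $2$-cell of $(\mathcal D,\mathbf t)$ (square cells $[e,f]$, triangle cells $[u,p,v]$ coming from $\mathbf t$) to $0$, and likewise for $(\mathcal D_1,\mathbf t)$, whose $2$-cells form a subset of those of $(\mathcal D,\mathbf t)$. Hence both maps descend to homology and the diagram
\begin{equation*}
\xymatrix{H_1(\mathcal D_1,\mathbf t,1) \ar[r]^{\psi_1} \ar[d] & \tilde\Pi_1 \ar[d]^{\hat\varphi} \\ H_1(\mathcal D,\mathbf t,1) \ar[r]^{\psi} & \tilde\Pi}
\end{equation*}
commutes, where the left vertical map is induced by the chain inclusion; evaluating at $\varsigma$ gives precisely $\hat\varphi\psi_1(\varsigma+B_1(\mathcal D_1,\mathbf t,1))=\psi(\varsigma+B_1(\mathcal D,\mathbf t))$.

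The main obstacle, and the place where care is needed, is the bookkeeping around \emph{homotopy versus homology} and the normalizing trivial paths $T_W$: in \cite{LDHM2} the operation $+$ on $\Sigma^\ast$ and the map $\psi$ are defined using starred paths $P^\ast=T_W P T_Z^{-1}$, so one must confirm that the identification of $\pi_1$ with $H_1$ via the Hurewicz map (which the paper adopts as a running convention) is compatible with $\hat\varphi$ and with the passage from $\mathcal M_1$ to $\mathcal M$. Concretely, I would check that the trivial paths $T_W$ chosen for $(\mathcal D_1,\mathbf t)$ can be taken to coincide with those chosen for $(\mathcal D,\mathbf t)$ — they only involve the letters $x\in\mathbf x$ and the relations $x^\varepsilon x^{-\varepsilon}=1$, which are common to both presentations — so that $\psi_0$-images of starred paths match under $\hat\varphi$. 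Once this compatibility of choices is granted, the rest is the routine naturality argument sketched above; the result is really the statement that the isomorphisms of \cite{LDHM2} are natural with respect to inclusions of subpresentations, restricted to the kernels of the boundary maps.
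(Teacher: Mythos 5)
Your proposal is correct and follows essentially the same route as the paper: both arguments reduce the claim to the edge-level identity $\hat\varphi\circ\psi_{1,0}=\psi_0$ on edges of $(\mathcal D_1,\mathbf t)$ (trivial edges to $0$, edges $(u,s,1,v)$ with $s\in\mathbf r_1^{\pm1}$ to $(^{[u]}s)$), and then pass to $1$-cycles and homology classes. The compatibility issue you flag in your last paragraph (loops versus $1$-cycles, the normalizing trivial paths) is exactly the point the paper disposes of by citing Lemma 5.1 of \cite{OK2002} to write $\varsigma$ as a sum of $1$-cycles arising from loops.
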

\begin{proof}
	This follows easily from the definitions of $\psi, \psi_{1}$ and $\hat{\varphi}$. Indeed, assume that
	$$\varsigma=\sum_{i \in I}z_{i}(u_{i},s_{i},1,v_{i}),$$ and let 
	$$J=\{i \in I: s_{i}=r^{\varepsilon_{i}}_{i} \text{ where } r^{\varepsilon_{i}}_{i} \in \mathbf{r}_{1}^{\pm 1}\}.$$ 
	Then from the definitions of $\psi_{1}$ and $\psi$ we have that
	\begin{equation} \label{psi10}
		\psi_{1,0}(\varsigma)= \prod_{j \in J}\mu_{1}\sigma_{1}(^{u_{j}}s_{j})^{z_{j}} \text{ and } \psi_{0}(\varsigma)= \prod_{j \in J}\mu\sigma(^{u_{j}}s_{j})^{z_{j}}
	\end{equation}
	where the exponential notations of the right hand sides mean that if $z_{j}<0$, then $\mu_{1}\sigma_{1}(^{u_{j}}s_{j})^{z_{j}}=\iota \left(\mu_{1}\sigma_{1}(^{u_{j}}s_{j})\right)^{|z_{j}|}$ and likewise, $\mu\sigma(^{u_{j}}s_{j})^{z_{j}}=\iota \left(\mu\sigma(^{u_{j}}s_{j})\right)^{|z_{j}|}$. We used the definitions of $\psi_{1,0}$ and $\psi_{0}$ by regarding $\varsigma$ as a sum of 1-cycles arising from loops in $(\mathcal{D}_{1}, \mathbf{t}, 1)$. This is always possible due to lemma 5.1 of \cite{OK2002}. Further we have that
	\begin{align*}
		\hat{\varphi}\psi_{1}(\varsigma+ B_{1}(\mathcal{D}_{1}, \mathbf{t}, 1))&= \hat{\varphi}(\psi_{1,0}(\varsigma)) && \text{(from the definition of $\psi_{1}$)}\\
		&= \hat{\varphi} \left(\prod_{j \in J}\mu_{1}\sigma_{1}(^{u_{j}}s_{j})^{z_{j}}\right) && \text{(from (\ref{psi10}))}\\
		&=\prod_{j \in J}\mu\sigma(^{u_{j}}s_{j})^{z_{j}}  && \text{(from the definition of $\hat{\varphi}$)}\\
		&=\psi_{0}(\varsigma) && \text{(from (\ref{psi10}))}\\
		&=\psi(\varsigma+ B_{1}(\mathcal{D}, \mathbf{t})) && \text{(from the definition of $\psi$)},
	\end{align*}
	proving the lemma.
\end{proof}

With the decomposition $H_{1}(\mathcal{D}_{1}, \mathbf{t})=\oplus_{g_{i} \in G_{1}}H_{1}(\mathcal{D}_{1}, \mathbf{t}, g_{i})$, consider the following sequence of homomorphisms
$$\xymatrix{\oplus_{g_{i} \in G_{1}}H_{1}(\mathcal{D}_{1}, \mathbf{t}, g_{i}) \ar[r]^{\oplus \theta_{u_{i}}} & \oplus_{g_{i} \in G_{1}}H_{1}(\mathcal{D}_{1}, \mathbf{t}, 1) \ar[r]^-{\oplus \psi_{1}} & \oplus_{g_{i} \in G_{1}} \tilde{\Pi}_{1} \ar[r]^{\oplus \hat{\varphi}}&  \oplus_{g_{i} \in G_{1}}\tilde{\Pi} \ar[r]^{\gamma} & H_{1}(\mathcal{D}, \mathbf{t})},$$
where 
$$\gamma \left(\sum_{g_{i}} d_{g_{i}}\right)= \sum_{g_{i}}\eta(d_{g_{i}}),$$ 
and write for short $\chi= \gamma (\oplus \hat{\varphi}) (\oplus \psi_{1}) (\oplus \theta_{u_{i}})$. 
\begin{lemma} \label{st}
For any element $\varsigma+B_{1}(\mathcal{D}_{1}, \mathbf{t}) \in H_{1}(\mathcal{D}_{1}, \mathbf{t})$, we have $\chi(\varsigma+B_{1}(\mathcal{D}_{1}, \mathbf{t}))=\varsigma +B_{1}(\mathcal{D}, \mathbf{t})$.
\end{lemma}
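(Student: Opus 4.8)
The plan is to unwind the definition of $\chi$ on the canonical decomposition of $\varsigma$, collapse the composite $\eta\,\hat{\varphi}\,\psi_{1}$ to the identity by combining Lemma \ref{inv} with the fact that $\psi$ and $\eta$ are mutually inverse on $(\mathcal{D},\mathbf{t})$, and then absorb the ``correction words'' $u_{i}^{-1}$ using Lemma \ref{Jm} together with the triviality of $G$. Concretely, fix $\varsigma\in Z_{1}(\mathcal{D}_{1},\mathbf{t})$ and write its canonical decomposition $\varsigma=\varsigma_{g_{1}}+\dots+\varsigma_{g_{n}}$ with $\varsigma_{g_{i}}\in Z_{1}(\mathcal{D}_{1},\mathbf{t},g_{i})$, choosing vertices $u_{i}$ in $(\mathcal{D}_{1},\mathbf{t},g_{i})$. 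Since the image of $u_{i}$ in $G_{1}$ is $g_{i}$, each $\varsigma_{g_{i}}\cdot u_{i}^{-1}$ is a $1$-cycle in $Z_{1}(\mathcal{D}_{1},\mathbf{t},1)$, and applying the four maps defining $\chi$ in turn yields
$$\chi(\varsigma+B_{1}(\mathcal{D}_{1},\mathbf{t}))=\sum_{i=1}^{n}\eta\,\hat{\varphi}\,\psi_{1}(\varsigma_{g_{i}}\cdot u_{i}^{-1}+B_{1}(\mathcal{D}_{1},\mathbf{t},1)).$$

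Next I would apply Lemma \ref{inv} to each summand to replace $\hat{\varphi}\psi_{1}(\varsigma_{g_{i}}\cdot u_{i}^{-1}+B_{1}(\mathcal{D}_{1},\mathbf{t},1))$ by $\psi(\varsigma_{g_{i}}\cdot u_{i}^{-1}+B_{1}(\mathcal{D},\mathbf{t}))$. Here I use that $\mathcal{P}$ presents the trivial group, so the monoid presentation $\mathcal{M}$ has a single congruence class and hence $(\mathcal{D},\mathbf{t})$ is connected, giving $H_{1}(\mathcal{D},\mathbf{t})=H_{1}(\mathcal{D},\mathbf{t},1)$; on this group $\psi$ and $\eta$ (transported from \cite{LDHM2} along $\Sigma\cong\mathcal{G}(\Upsilon)$ and the Hurewicz identification) are mutually inverse, so each summand becomes simply $\varsigma_{g_{i}}\cdot u_{i}^{-1}+B_{1}(\mathcal{D},\mathbf{t})$. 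Therefore
$$\chi(\varsigma+B_{1}(\mathcal{D}_{1},\mathbf{t}))=\sum_{i=1}^{n}(\varsigma_{g_{i}}\cdot u_{i}^{-1}+B_{1}(\mathcal{D},\mathbf{t})).$$

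Finally, since $G$ is trivial the image of $u_{i}$ in $G$ equals $1$, so Lemma \ref{Jm}, applied in $(\mathcal{D},\mathbf{t})$ and using $\varsigma_{g_{i}}\in Z_{1}(\mathcal{D}_{1},\mathbf{t})\subseteq Z_{1}(\mathcal{D},\mathbf{t})$, gives $\varsigma_{g_{i}}\cdot u_{i}^{-1}+B_{1}(\mathcal{D},\mathbf{t})=\varsigma_{g_{i}}+B_{1}(\mathcal{D},\mathbf{t})$ for each $i$. Summing over $i$ yields $\chi(\varsigma+B_{1}(\mathcal{D}_{1},\mathbf{t}))=\sum_{i}\varsigma_{g_{i}}+B_{1}(\mathcal{D},\mathbf{t})=\varsigma+B_{1}(\mathcal{D},\mathbf{t})$, which is the claim.

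The argument is essentially bookkeeping, so the only delicate point — and the main, rather mild, obstacle — is keeping track of which connected component each chain inhabits and systematically exploiting the triviality of $G$ so that all components of $\mathcal{D}_{1}$ fuse into the component of $1$ in $\mathcal{D}$, where $\psi\circ\eta=\mathrm{id}$. A secondary technical point is that $\varsigma_{g_{i}}\cdot u_{i}^{-1}$ must be presented as a sum of $1$-cycles coming from genuine loops before feeding it to $\psi_{1}$; this is handled exactly as in the proof of Lemma \ref{inv} via the fact (lemma 5.1 of \cite{OK2002}) that every $1$-cycle decomposes in this way.
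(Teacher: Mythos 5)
Your proposal is correct and follows essentially the same route as the paper's own proof: decompose $\varsigma$ into its components, shift each to the component of $1$ via $\theta_{u_{i}}$, collapse $\hat{\varphi}\psi_{1}$ to $\psi$ via Lemma \ref{inv} so that $\gamma$ produces $\eta\psi=\mathrm{id}$ on each summand, and then remove the translations $u_{i}^{-1}$ by Lemma \ref{Jm} using the triviality of $G$. The only difference is cosmetic: you make explicit the connectivity of $(\mathcal{D},\mathbf{t})$ and the appeal to lemma 5.1 of \cite{OK2002}, points the paper leaves implicit or defers to the proof of Lemma \ref{inv}.
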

\begin{proof}
If $\varsigma+B_{1}(\mathcal{D}_{1}, \mathbf{t})$ is expressed as $$\varsigma+B_{1}(\mathcal{D}_{1}, \mathbf{t})=\sum_{g_{i}}(\varsigma_{g_{i}}+B_{1}(\mathcal{D}_{1}, \mathbf{t}, g_{i})),$$
then we have
\begin{align*}
	\chi(\varsigma+B_{1}(\mathcal{D}_{1}, \mathbf{t}))&=(\gamma (\oplus \hat{\varphi}) (\oplus \psi_{1}) (\oplus \theta_{u_{i}}))\left(\sum_{g_{i}}(\varsigma_{g_{i}}+B_{1}(\mathcal{D}_{1}, \mathbf{t}, g_{i}))\right)\\
	&=(\gamma (\oplus \hat{\varphi}) (\oplus \psi_{1}) )\left(\sum_{g_{i}}(\varsigma_{g_{i}}\cdot u_{i}^{-1}+B_{1}(\mathcal{D}_{1}, \mathbf{t}, 1))\right)\\
	&=(\gamma (\oplus \hat{\varphi}))\left(\sum_{g_{i}}\psi_{1}(\varsigma_{g_{i}}\cdot u_{i}^{-1}+B_{1}(\mathcal{D}_{1}, \mathbf{t}, 1))\right)\\
	&=\gamma \left(\sum_{g_{i}}\hat{\varphi}\psi_{1}(\varsigma_{g_{i}}\cdot u_{i}^{-1}+B_{1}(\mathcal{D}_{1}, \mathbf{t}, 1))\right)\\
	&=\gamma \left(\sum_{g_{i}}\psi(\varsigma_{g_{i}}\cdot u_{i}^{-1}+B_{1}(\mathcal{D}, \mathbf{t}))\right) && \text{(by lemma \ref{inv})} \\
	&=\sum_{g_{i}}\eta \psi(\varsigma_{g_{i}}\cdot u_{i}^{-1}+B_{1}(\mathcal{D}, \mathbf{t}))\\
	&=\sum_{g_{i}} \varsigma_{g_{i}}\cdot u_{i}^{-1}+B_{1}(\mathcal{D}, \mathbf{t}) \\
	&=\sum_{g_{i}} \varsigma_{g_{i}} +B_{1}(\mathcal{D}, \mathbf{t}) && \text{(by lemma \ref{Jm})}\\
	&=\varsigma +B_{1}(\mathcal{D}, \mathbf{t}).
\end{align*}
\end{proof}

\begin{theorem}
	The subpresentation $\cP_{1}$ is aspherical.
\end{theorem}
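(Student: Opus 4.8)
The plan is to read off the asphericity of $\cP_{1}$ from Theorem \ref{asphh}, so that it suffices to prove $H_{1}(\mathcal{D}_{1},\mathbf{p}_{1})=0$. Since $\cP$ is aspherical, Theorem \ref{asphh} gives $H_{1}(\mathcal{D},\mathbf{p})=0$, and the long exact homology sequence of the pair $((\mathcal{D},\mathbf{p}),(\mathcal{D}_{1},\mathbf{p}_{1}))$ displayed above then reduces the problem to showing $H_{2}((\mathcal{D},\mathbf{p}),(\mathcal{D}_{1},\mathbf{p}_{1}))=0$. Using the exact sequence (\ref{mainses}) coming from the Snake Lemma, the triviality of $G$ (which makes $\mathbb{Z}G.\mathbf{q}_{0}.\mathbb{Z}G$ infinite cyclic), and the fact established above that $Im(\hat{\partial}_{2})/B_{1}(\mathcal{D},\mathcal{D}_{1})$ is cyclic with generator $g_{0}:=\hat{\partial}_{2}([1,\mathbf{q}_{0},1])+B_{1}(\mathcal{D},\mathcal{D}_{1})$, everything comes down to the single assertion that $g_{0}$ has infinite order, i.e.\ that $Im(\hat{\partial}_{2})/B_{1}(\mathcal{D},\mathcal{D}_{1})$ is infinite cyclic.

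To prove this I would use that $\hat{\partial}_{2}([1,\mathbf{q}_{0},1])=q(r_{0},1)+C_{1}(\mathcal{D}_{1})$, where $q(r_{0},1)\in Z_{1}(\mathcal{D})$ is the $1$-cycle corresponding to the loop $Q(r_{0},1)$. The key point is that any relation $n\,g_{0}=0$ forces $n\,q(r_{0},1)=\tilde{\partial}_{2}(\xi)+c$ for some $\xi\in C_{2}(\mathcal{D})$ and some $c\in C_{1}(\mathcal{D}_{1})$; applying $\partial_{1}$ and using $\partial_{1}\tilde{\partial}_{2}=0$ shows that $c$ is \emph{automatically} a $1$-cycle of $\mathcal{D}_{1}$. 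Passing to $H_{1}(\mathcal{D},\mathbf{t})$ one gets $n[q(r_{0},1)]=[\tilde{\partial}_{2}(\xi)]+[c]=[c]$, and since $c\in Z_{1}(\mathcal{D}_{1})$ while, by Lemma \ref{st}, $\chi$ is precisely the map induced on $H_{1}$ by the inclusion $(\mathcal{D}_{1},\mathbf{t})\hookrightarrow(\mathcal{D},\mathbf{t})$, this says $n[q(r_{0},1)]\in Im(\chi)$, i.e.\ $n[q(r_{0},1)]=0$ in $\operatorname{coker}(\chi)$.

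It then remains to compute $\operatorname{coker}(\chi)$ together with the image of $[q(r_{0},1)]$ in it. Because $\cP$ presents the trivial group, $\mathcal{D}$ is connected and $H_{1}(\mathcal{D},\mathbf{t})=H_{1}(\mathcal{D},\mathbf{t},1)\cong\tilde{\Pi}$ via $\eta$; and since $\cP$ is aspherical, $\tilde{\Pi}=\hat{\mathfrak{U}}$ by Theorem \ref{wdom}. Combining Lemma \ref{st}, Proposition \ref{quasi} and the fact that $\psi_{1}$ and the $\theta_{u_{i}}$ are isomorphisms, one gets $Im(\chi)=\eta(\hat{\mathfrak{A}}_{1})$, whence $\operatorname{coker}(\chi)\cong\hat{\mathfrak{U}}/\hat{\mathfrak{A}}_{1}$. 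By Proposition \ref{free} the module $\hat{\mathfrak{U}}$ is $\mathbb{Z}G$-free on $\mathbf{r}$, hence (as $G$ is trivial) free abelian on $\{\mu\sigma(rr^{-1}):r\in\mathbf{r}\}$, while $\hat{\mathfrak{A}}_{1}=\langle\mu\sigma(rr^{-1}):r\in\mathbf{r}_{1}\rangle$; therefore $\hat{\mathfrak{U}}/\hat{\mathfrak{A}}_{1}\cong\mathbb{Z}$, generated by the class of $\mu\sigma(r_{0}r_{0}^{-1})$. A direct computation with $\psi_{0}$ gives $\psi([q(r_{0},1)])=\mu\sigma(r_{0}r_{0}^{-1})$, so $[q(r_{0},1)]$ maps to a generator of $\operatorname{coker}(\chi)\cong\mathbb{Z}$; thus $n[q(r_{0},1)]=0$ there forces $n=0$, the element $g_{0}$ has infinite order, and the theorem follows.

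I expect the main obstacle to be exactly this last chain of identifications: making precise that a relation $n\,g_{0}=0$ can only be witnessed by a relative $2$-chain whose ``error term'' is a genuine $1$-cycle of $\mathcal{D}_{1}$, and then pinning down $\operatorname{coker}(\chi)\cong\hat{\mathfrak{U}}/\hat{\mathfrak{A}}_{1}\cong\mathbb{Z}$ — which is where the hypotheses that $\cP$ is aspherical (so that $\mathcal{N}(\cP)$, equivalently $\hat{\mathfrak{U}}$, is $\mathbb{Z}G$-free) and that $G$ is trivial are both genuinely used.
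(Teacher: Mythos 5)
Your proposal is correct and follows essentially the same route as the paper: reduce via Theorem \ref{asphh}, the long exact sequence of the pair, and the sequence (\ref{mainses}) to showing that $Im(\hat{\partial}_{2})/B_{1}(\mathcal{D},\mathcal{D}_{1})$ is infinite cyclic, then transfer a hypothetical relation $n\,g_{0}=0$ through $\chi$ (Lemma \ref{st}) into $\tilde{\Pi}=\hat{\mathfrak{U}}$ and rule it out using Proposition \ref{quasi} and the freeness of $\hat{\mathfrak{U}}$ (Proposition \ref{free}). Your packaging of the final step as the computation $\operatorname{coker}(\chi)\cong\hat{\mathfrak{U}}/\hat{\mathfrak{A}}_{1}\cong\mathbb{Z}$ is only a cosmetic reorganization of the paper's proof by contradiction; the facts invoked are identical.
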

\begin{proof}
	We prove first that $Im(\hat{\partial}_{2})/  B_{1}(\mathcal{D}, \mathcal{D}_{1})$ is infinite cyclic. If we assume the contrary, then there is $z \in \mathbb{Z}$ and a 2-chain $\xi \in C_{2}(\mathcal{D})$  such that 
	$$z \tilde{\partial}_{2}([1, \mathbf{q}_{0}, 1])+C_{1}(\mathcal{D}_{1})=  \tilde{\partial}_{2} (\xi)+C_{1}(\mathcal{D}_{1}).$$
	It follows that $\varsigma=z \tilde{\partial}_{2}([1, \mathbf{q}_{0}, 1])-\tilde{\partial}_{2} (\xi)$ is a 1-cycle in $C_{1}(\mathcal{D}_{1})$ and therefore $\varsigma \in Z_{1}(\mathcal{D}_{1},\mathbf{t})$. Now we see that
	\begin{align*}
		\chi (\varsigma+ B_{1}(\mathcal{D}_{1}, \mathbf{t}))&= \varsigma+B_{1}(\mathcal{D}, \mathbf{t}) && \text{(from lemma \ref{st})}\\
		&=(z \tilde{\partial}_{2}([1, \mathbf{q}_{0}, 1])-\tilde{\partial}_{2} (\xi))+B_{1}(\mathcal{D}, \mathbf{t})\\
		&=z \tilde{\partial}_{2}([1, \mathbf{q}_{0}, 1])+ B_{1}(\mathcal{D}, \mathbf{t})\\
		&=z q(r_{0},1)+B_{1}(\mathcal{D}, \mathbf{t}).
	\end{align*}
	But from proposition \ref{quasi} it follows that
	$$((\oplus \hat{\varphi}) (\oplus \psi_{1}) (\oplus \theta_{u_{i}}))(\varsigma+ B_{1}(\mathcal{D}_{1}, \mathbf{t}))=\sum_{g_{i}}v_{g_{i}},$$
	where $v_{g_{i}}\in \hat{\mathfrak{A}}_{1}$, say $v_{g_{i}}=\mu \sigma (^{u_{i}}r_{i}({^{u_{i}}}r_{i})^{-1})^{z_{i}}$ where $r_{i} \in \mathbf{r}_{1}$, $u_{i} \in F$ and $z_{i} \in \mathbb{Z}$. Now we have
	\begin{align*}
		\chi (\varsigma+ B_{1}(\mathcal{D}_{1}, \mathbf{t}))&=\gamma \left(\sum_{g_{i}}v_{g_{i}}\right)\\
		&= \gamma \left(\sum_{g_{i}}\mu \sigma (^{u_{i}}r_{i}({^{u_{i}}}r_{i})^{-1})^{z_{i}}\right)\\
		&= \sum_{g_{i}} \eta (\mu \sigma (^{u_{i}}r_{i}({^{u_{i}}}r_{i})^{-1})^{z_{i}})\\
		&= \sum_{g_{i}} z_{i}(q(r_{i},u_{i}) + B_{1}(\mathcal{D},\mathbf{t}) ) && \text{(Proposition \ref{th})}.
	\end{align*}
	Recollecting, we have in $H_{1}(\mathcal{D}, \mathbf{t})$ the equality
	$$z q(r_{0},1)+ B_{1}(\mathcal{D}, \mathbf{t})=\sum_{g_{i}} z_{i} q(r_{i},u_{i}) + B_{1}(\mathcal{D},\mathbf{t}) .$$
	In $\tilde{\Pi}=\hat{\mathfrak{U}}$ this equality translates to
	$$\mu \sigma (r_{0}r_{0}^{-1})^{z}= \prod_{g_{i}}\mu \sigma (^{u_{i}}r_{i}({^{u_{i}}}r_{i})^{-1})^{z_{i}}$$
	which from proposition \ref{free} is impossible since each $r_{i} \neq r_{0}$. So it remains that $Im(\hat{\partial}_{2})/  B_{1}(\mathcal{D}, \mathcal{D}_{1})$ is infinite cyclic, and as a result it is isomorphic to $\mathbb{Z}G.\mathbf{q}_{0}.  \mathbb{Z}G$ where the isomorphism sends $\mathbf{q}_{0}$ to 
	$\hat{\partial}_{2}([1,\mathbf{q}_{0},1])+B_{1}(\mathcal{D}, \mathcal{D}_{1})$ which is the free generator of $Im(\hat{\partial}_{2})/  B_{1}(\mathcal{D}, \mathcal{D}_{1})$. But this map is the map $\nu$ of (\ref{mainses}), therefore $H_{2}((\mathcal{D}, \mathbf{p}), (\mathcal{D}_{1}, \mathbf{p}_{1}))=0$ as desired. 
\end{proof}

\end{document}